\documentclass[11pt]{article}

\usepackage{style}

\newtheorem{Theorem}{Theorem}
\numberwithin{Theorem}{section}

\newtheorem{Proposition}[Theorem]{Proposition}
\newtheorem{Lemma}[Theorem]{Lemma}
\newtheorem{Remark}[Theorem]{Remark}
\newtheorem{Definition}[Theorem]{Definition}
\newtheorem{Example}[Theorem]{Example}
\newtheorem{corollary}[Theorem]{Corollary}
\newtheorem*{Ack}{Acknowledgements}

\newtheorem*{theorem}{Theorem}

\title{Quiver representations and idempotent loops.}
\author{Liam Riordan}
\date{}

\begin{document}

\maketitle

\begin{abstract}
    We study $\cstar$-actions on quiver representations by adding idempotent loops. We construct a category $\Mod \limmauro{Q}$ and relate it to $\mathbb{C}Q$ modules with $\cstar$-actions. We further describe a subcategory $\tlimmauro{Q}{T}$ equivalent to the category of equivariant modules where $\cstar$ acts on $\mathbb{C}Q$ via a character $T \in \mathbb{Z}^{Q_1}.$ This allows us to study $\cstar$-actions on quiver Grassmannians and we can recover known combinatorial descriptions of their Euler characteristics via the category $\Mod \limmauro{Q}.$ We directly relate morphisms of quivers to full subcategories of $\Mod \limmauro{Q}.$ Finally we show that the representation theory of quivers with idempotents can be applied in a useful way to preprojective algebras. We show that this gives constructions of Galois covers and cluster characters used by Geiss, Leclerc, and Schr\"oer.
\end{abstract}

\tableofcontents

\section{Introduction}
In this paper we will consider how to encode $\cstar$-actions on $\crep$ via representation theory. The combinatorics of such actions has already been considered and used to study Euler characteristics of quiver Grassmannians by Cerulli-Irelli \cite{CI} and Haupt \cite{HAU}. Our construction will be motivated by the following problem. Consider a representation $M \in \crep$ and $Q^0$ a quiver with the same vertices as $Q$ and with all of the arrows of $Q$ plus a loop at each vertex. See section \ref{section 2} for more details on the notation. Denote by $i_0(M)$ the representation of $Q^0$ where we have simply added the identity maps to each of the loops and all other data is determined by M. Observe that \[\gr_{\textbf{d}}(i_0(M)) \cong \gr_{\textbf{d}}(M)\] and given $N \in \crep$ we have \[\Hom_Q(M,N) \cong \Hom_{Q^0}(i_0(M),i_0(N)).\] See \ref{Hom fix} and \ref{Grass fix} for details of these isomorphisms. It is natural to ask the following question: What happens to the above isomorphisms when we break the identity loops into several loops giving an orthogonal decomposition of the identity idempotent. In this paper we will study the representations which arise from this process.  

For a representation $M \in \crep$ of dimension vector $\textbf{m}$ consider a morphism of algebraic groups \[t: \cstar \rightarrow \operatorname{G}_{\textbf{m}} \defeq \displaystyle\prod_{x \in Q_0} \mathrm{GL}(\mathbb{C}^{m_x}).\] Such a $t$ gives rise to a $\cstar$-action on the representation variety \[E_{\textbf{m}} \defeq \displaystyle\bigoplus_{a \in Q_1} \Hom(M_{s(a)},M_{t(a)})\] as well as on \[M_{\textbf{m}} \defeq \displaystyle\bigoplus_{x \in Q_0} M_x.\] Here $M_x$ for $x \in Q_0$ denotes the vector space at vertex $x$ of $M.$ Given a representation $M \in \crep$ of dimension vector $\textbf{m}$ we will define a representation $M(t)$ by adding loops at each vertex which are idempotents projecting onto the weight spaces given by the $\cstar$-action on $M_{\textbf{m}}.$ We will show that $M(t)$ encodes the representation theory of $M$ along with the $\cstar$-action.

Given $M(t)$ the quiver Grassmannian $\gr_{\textbf{d}}(M(t))$ is a subvariety of the quiver Grassmannian of the underlying quiver representation $M.$ It is natural to ask how different choices of idempotents change the subvariety $\gr_{\textbf{d}}(M(t)) \subseteq \gr_{\textbf{d}}(M).$ For an action of $\cstar$ on $X$ we denote by $X^{\cstar}$ the subset of fixed points under this action. We show that 
\[\gr_{\textbf{d}}(M(t)) = \gr_{\textbf{d}}(M) \cap \displaystyle\prod_{a \in Q_0} \gr (d_a, m_a)^{\mathbb{C}^{\times}}.\]
Given $M(t)$ such that $\gr_{\textbf{d}}(M)$ is $\cstar$-stable then $\gr_{\textbf{d}}(M(t))$ is a torus fixed point subvariety of $\gr_{\textbf{d}}(M).$ This would allow us to compute Euler characteristics for quiver Grassmannians. Such computations play a role in cluster theory where Euler characteristics of quiver Grassmannians are coefficients of cluster monomials in terms of a fixed seed \cite{FK}. To formalise the representations $M(t)$ we define algebras $\mauro{j}{Q}$ for $j \in \mathbb{N}$ whose module categories consists of $\mathbb{C}Q$ modules with idempotent loops. These come with certain quotient algebras $\tmauro{j}{Q}{T}$ for $T \in \mathbb{Z}^{Q_1}$ which correspond to equivariant representations. We construct a colimit of the categories $\Mod \mauro{j}{Q}$ for $j \in \mathbb{N}$ which can be thought of as a category modelling quiver representations with certain $\mathbb{C}^{\times}$-actions. There is a functor $R\colon \Mod \mauro{j}{Q} \rightarrow \crep$ which sends a $\mauro{j}{Q}$ to its underlying $Q$-representation. We show that the colimit of the categories $\Mod \mauro{j}{Q}$ exists and denote it $\Mod \limmauro{Q}.$ Likewise, we construct a colimit $\Mod \tlimmauro{Q}{T}.$ 

We describe the cases where $\gr_{\textbf{d}}(M)$ is stable under the $\cstar$-action using the category $\Mod \tlimmauro{Q}{T},$ which we show is equivalent to equivariance (Theorem \ref{equivariant}). We further explain how these techniques give an algebraic framework (Proposition \ref{haupt}) for combinatorial results of Cerulli-Irelli \cite{CI} and Haupt \cite{HAU} for computing Euler characteristics. In particular, we define the notion of a limit (Definition \ref{limit}) of an equivariant sequence. 

\begin{theorem}[Theorem \ref{limit euler}]
    Given $X$ a limit of an equivariant sequence $(\limnot{0}{M},\cdots,\limnot{r}{M})$ we have \[\chi(\gr_\textbf{d}(X)) = \chi(\gr_\textbf{d}(R(X))), \, \forall \textbf{d} \in \mathbb{N}^{Q_0}.\] 
\end{theorem}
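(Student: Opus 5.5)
The plan is induction along the equivariant sequence, using at each step the torus fixed-point formula for Euler characteristics. By Definition \ref{limit}, a limit $X$ of $(\limnot{0}{M},\dots,\limnot{r}{M})$ should arise by successively refining idempotent loops. Each $\limnot{k+1}{M}$ is obtained from $\limnot{k}{M}$ by a one-parameter subgroup $t_k\colon\cstar\to \operatorname{G}_{\textbf{m}}$ that commutes with the existing idempotents, and $\limnot{k+1}{M}=\limnot{k}{M}(t_k)$. Equivariance means the $\mathbb{C}Q$-structure is equivariant for a character $T_k\in\mathbb{Z}^{Q_1}$, i.e.\ the module lies in $\tlimmauro{Q}{T_k}$. $X$ is the last term, viewed in $\Mod\limmauro{Q}$. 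I will prove the stronger statement
\[\chi(\gr_{\textbf{d}}(\limnot{k+1}{M}))=\chi(\gr_{\textbf{d}}(\limnot{k}{M}))\quad\text{for all } k,\]
and chain these equalities, using $R(\limnot{0}{M})=R(X)$ and $\gr_{\textbf{d}}(\limnot{0}{M})\cong \gr_{\textbf{d}}(R(X))$ when $\limnot{0}{M}=i_0(R(X))$ (Grass fix).

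For a single step, fix $k$ and set $N=\limnot{k}{M}$. First I show that $\gr_{\textbf{d}}(N)$ is stable under the $\cstar$-action induced by $t_k$ on $\prod_a \gr(d_a,m_a)$.

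\begin{itemize}
\item Stability under the $\mathbb{C}Q$-structure comes from equivariance via Theorem \ref{equivariant}: $t_k(\lambda)$ maps $N_a(U)$ to a scalar multiple of $N_a(t_k(\lambda)U)$, so $Q$-subrepresentations go to $Q$-subrepresentations.
\item Stability under the earlier idempotent loops holds because $t_k$ commutes with them.
\end{itemize}

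Next I apply the description of the fixed locus established earlier:
\[\gr_{\textbf{d}}(N(t_k))=\gr_{\textbf{d}}(N)\cap\prod_a\gr(d_a,m_a)^{\cstar}=\gr_{\textbf{d}}(N)^{\cstar},\]
where $\gr_{\textbf{d}}(N)$ is the subvariety cut out by the previous idempotents. Finally I use the Białynicki-Birula/localization fact that for an algebraic $\cstar$-action on a complex projective variety $Y$, $\chi(Y)=\chi(Y^{\cstar})$. Quiver Grassmannians are projective, being closed in products of Grassmannians, so the step is done. Proposition \ref{haupt} presumably already packages this single-step statement, and I would cite it directly.

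The main obstacle is the bookkeeping in the stability step. Definition \ref{limit} may phrase the sequence through the colimit categories $\Mod\tlimmauro{Q}{T}$ and refinements of idempotent decompositions rather than explicit cocharacters $t_k$. I would first extract from the definition a cocharacter realizing each refinement: take weights distinct on the new summands, with a gap large enough that compatibility with the character $T_k$ holds. Then I must check that the weight-space decomposition of $t_k$ is exactly the new idempotent decomposition, so that the fixed subspaces $\gr(d_a,m_a)^{\cstar}$ are exactly the subspaces compatible with the new idempotents. This also requires that a subspace stable under a torus acting with several weights splits as the sum of its intersections with the weight spaces, which is standard.
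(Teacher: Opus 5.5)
Your overall architecture is reasonable: a chain of equalities $\chi(Y)=\chi(Y^{\cstar})$ along nested closed subvarieties of $\gr_{\textbf{d}}(R(X))$. However, the stability step rests on a misreading of what a sequence of equivariant lifts is, and as you justify it, it fails.

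In Proposition \ref{haupt} one has $M^{k+1}\in\Mod\tlimmauro{\Gamma(M^{k})}{T_{k+1}}$ with $T_{k+1}\in\mathbb{Z}^{\Gamma(M^{k})_1}$. The character lives on the arrows of the coefficient quiver, not on $Q_1$. Write $N=M^k$, viewed over $Q$, with idempotents $n_i^x$, and let $t_k$ be the torus of $M^{k+1}$ transported to $\prod_x\mathrm{GL}(N_x)$. The hypothesis then only says that each nonzero block $n_j^{t(a)}N_an_i^{s(a)}$ is rescaled by its own power $\lambda^{(T_{k+1})_b}$, where $b$ is the corresponding arrow of $\Gamma(N)$. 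Different blocks of the same $a\in Q_1$ may carry different powers.

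For $k=0$ this agrees with your reading, since $\Gamma(i_0(M))$ is essentially $Q$. For $k\geq 1$ it does not: $t_k(\lambda)N_at_k(\lambda)^{-1}$ is no longer a scalar multiple of $N_a$. So your first bullet, namely $t_k(\lambda)N_a(U)=N_a(t_k(\lambda)U)$ and hence preservation of $Q$-subrepresentations, has no justification. In fact $t_k$ need not preserve $\gr_{\textbf{d}}(R(N))$ at all; this is exactly the Haupt situation described in the paragraph after Lemma \ref{cerulli-irelli}.

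Your reading cannot be the intended one. If every $t_k$ were $Q_1$-homogeneous, then, since they commute, $t_0(\lambda^{K^{r-1}})\,t_1(\lambda^{K^{r-2}})\cdots t_{r-1}(\lambda)$ with $K\gg 0$ would be a single $Q_1$-homogeneous cocharacter whose weight spaces are the idempotents of $X$. Every sequence would then collapse to one step. In Example \ref{lim ex} the limit is primitive while $M$ has no primitive equivariant lift, so the second step there is genuinely not $Q_1$-homogeneous.

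What you actually need, and what is true, is $t_k$-stability of the smaller variety $\gr_{\textbf{d}}(N)$ only. A point $U$ of it splits as $U_x=\bigoplus_i n_i^xU_x$. For such $U$, the condition $N_a(U_{s(a)})\subseteq U_{t(a)}$ is equivalent to the block conditions $n_j^{t(a)}N_an_i^{s(a)}\bigl(n_i^{s(a)}U_{s(a)}\bigr)\subseteq n_j^{t(a)}U_{t(a)}$ for all $i,j$. Since $t_k$ commutes with the $n_i^x$ and rescales each block by a single scalar, these conditions are preserved by $t_k$.

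This block-wise reduction is precisely Lemma \ref{splitting up}, $\gr_{\textbf{d}}(N)\cong\bigsqcup_{\underline{\delta}_N(\textbf{e})=\textbf{d}}\gr_{\textbf{e}}(\gamma(N))$, followed by Proposition \ref{Grass fix} for $M^{k+1}$ over the quiver $\Gamma(N)$. That is how the paper proves Proposition \ref{haupt}. With this repair your chain goes through and becomes essentially the paper's argument.

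Citing Proposition \ref{haupt} alone also does not finish the proof. It computes $\sum_{\underline{\delta}^{r}(\textbf{e})=\textbf{d}}\chi(\gr_{\textbf{e}}(M^r))$ for $M^r$ over $\Gamma(M^{r-1})$, not $\chi(\gr_{\textbf{d}}(X))$ for $X$ over $Q$. You need a further use of Lemma \ref{splitting up}, applied to both $X$ and $M^r$ (whose $\gamma$'s coincide), to identify $\gr_{\textbf{d}}(X)$ with $\bigsqcup_{\textbf{e}}\gr_{\textbf{e}}(M^r)$. That is the other half of the paper's proof.
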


We further show (Proposition \ref{limit to haupt}) that equivariant sequences are equivalent to nice sequences in the sense of Haupt \cite{HAU}. Given an object $N \in \Mod \limmauro{Q}$ then there is a map $t_N: \cstar \rightarrow \mathrm{G}_\textbf{n}.$ Given another object $M \in \Mod \limmauro{Q}$ we denote by $\Hom(\textbf{d},M) \subseteq \oplus_x \Hom(\mathbb{C}^{d_x},M_x)$ the space of maps $(f_x)_{x \in Q_0}$ which give rise to a morphism $(f_x)_{x\in Q_0}:U \rightarrow M$ for some $U \in \Mod \limmauro{Q}.$ If we have an equality of dimension vectors $\textbf{m} = \textbf{n}$ then by fixing an isomorphism of the vector spaces $N_x \cong M_x$ for each $x \in Q_0$ there is an action of $\mathrm{G}_\textbf{n}$ on $\Hom(\textbf{d},M)$ for any $\textbf{d} \in \mathbb{N}.$ We say that $\Hom(-,M)$ is $t_N$-stable if the subspace $\Hom(\textbf{d},M)$ is $t_N$-stable for all $\textbf{d} \in \mathbb{N}^{Q_0}.$

\begin{theorem}[Theorem \ref{Main theorem}]
    Let $(\limnot{0}{M},\limnot{1}{M},\cdots,\limnot{r}{M})$ be a collection of objects in $\Mod \limmauro{Q}$ such that for each i we have 
    \begin{itemize}
        \item $R(\limnot{i}{M}) = R(\limnot{i-1}{M}),$
        \item $\Hom(-,\limnot{i-1}{M})$ is $t_{\limnot{i}{M}}$-stable,
        \item $\Hom(\textbf{d},\limnot{i}{M}) \subseteq \Hom(\textbf{d},\limnot{i-1}{M})$ $\forall \textbf{d} \in \mathbb{N}^{Q_0}.$ 
    \end{itemize}
    Under these conditions 
    \[\chi(\gr_\textbf{d}(\limnot{r}{M})) = \chi(\gr_\textbf{d}(\limnot{0}{M})).\]    
\end{theorem}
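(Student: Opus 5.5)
By induction on the chain it suffices to treat a single step: show that for each $i$,
\[\chi(\gr_\textbf{d}(\limnot{i}{M})) = \chi(\gr_\textbf{d}(\limnot{i-1}{M})),\]
and then chain the equalities from $i=1$ to $i=r$.

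Fix $i$ and write $N = \limnot{i}{M}$ and $M' = \limnot{i-1}{M}$. By the first hypothesis they have the same underlying $Q$-representation, so the same dimension vector, and we may identify $N_x = M'_x$ for each $x$. This gives an action of $\cstar$ on $\oplus_x \Hom(\mathbb{C}^{d_x},M'_x)$ through $t_N$.

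The first step is to describe the quiver Grassmannians in terms of the spaces $\Hom(\textbf{d},-)$. A subrepresentation of dimension vector $\textbf{d}$ is the image of an injective map $(f_x)$ from some $U$, so $\gr_\textbf{d}(M')$ is the quotient of the open subset of injective elements of $\Hom(\textbf{d},M')$ by $\mathrm{G}_\textbf{d}$. Concretely, $\gr_\textbf{d}(M')$ is the closed subvariety of $\prod_x \gr(d_x,m_x)$ of collections of subspaces that are stable under the arrows and compatible with the idempotent loops, in the sense of the identity $\gr_{\textbf{d}}(M(t)) = \gr_{\textbf{d}}(M) \cap \prod \gr(d_a,m_a)^{\cstar}$ from the introduction. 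The action of $t_N$ on $\prod_x \gr(d_x,m_x)$ commutes with the $\mathrm{G}_\textbf{d}$-action. The second hypothesis, that $\Hom(-,M')$ is $t_N$-stable, therefore implies that $\gr_\textbf{d}(M')$ is a $\cstar$-stable closed subvariety of the projective variety $\prod_x\gr(d_x,m_x)$.

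The second step is to apply the Białynicki-Birula / localisation principle: for a $\cstar$-action on a projective variety $X$, $\chi(X)=\chi(X^{\cstar})$. This gives
\[\chi(\gr_\textbf{d}(M')) = \chi(\gr_\textbf{d}(M')^{\cstar}).\]

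The third step, which I expect to be the main obstacle, is to identify the fixed locus with the smaller Grassmannian:
\[\gr_\textbf{d}(M')^{\cstar} = \gr_\textbf{d}(N).\]
A subspace collection is $t_N$-fixed exactly when it is a direct sum of its intersections with the $t_N$-weight spaces, that is, when it is compatible with the idempotent loops of $N$. Using the displayed identity, which is valid for objects of $\Mod \limmauro{Q}$ through the functor $R$ and the map $t_N$, this gives
\[\gr_\textbf{d}(N) = \gr_\textbf{d}(R(N)) \cap \prod_x \gr(d_x,m_x)^{\cstar}.\]
What remains is to show that
\[\gr_\textbf{d}(N) = \gr_\textbf{d}(M') \cap \prod_x \gr(d_x,m_x)^{\cstar}.\]
Here the third hypothesis enters. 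The inclusion $\Hom(\textbf{d},N)\subseteq \Hom(\textbf{d},M')$ gives $\gr_\textbf{d}(N)\subseteq \gr_\textbf{d}(M')$, and the image of an injective map in $\Hom(\textbf{d},N)$ is $t_N$-fixed, so $\gr_\textbf{d}(N)$ lies in the fixed locus. For the reverse inclusion, a $t_N$-fixed point of $\gr_\textbf{d}(M')$ is stable under the arrows, since $R(N)=R(M')$, and it is compatible with the weight decomposition of $t_N$, so it is a sub-object of $N$. The care needed is in checking that stability under the arrows of $R(M')$, together with weight-compatibility for $t_N$, is exactly the membership condition for $\Hom(\textbf{d},N)$, rather than merely compatibility with the loops of $M'$. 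I would verify this directly from the definition of objects in the colimit $\Mod\limmauro{Q}$, by passing to a finite stage $\Mod\mauro{j}{Q}$ where the idempotents are explicit.

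With the fixed locus identified, the second and third steps combine to give the single-step equality, and the induction completes the proof.
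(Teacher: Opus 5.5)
Your argument is correct. The paper gives no written proof of Theorem \ref{Main theorem}. It states the result right after three things: the lemma equating $t_Z$-stability of $\Hom(-,Y)$ with $t_Z$-stability of every $\gr_{\textbf{d}}(Y)$, Proposition \ref{grass and equi}, and the lemma relating this to $t_Z$ acting equivariantly on $\gamma(Y)$. It then calls the theorem ``another way of characterising equivariant sequences''. So the intended derivation appears to be: turn the hypotheses into a sequence of equivariant lifts and appeal to Proposition \ref{haupt} and Theorem \ref{limit euler}.

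You instead work one step at a time directly on $\gr_{\textbf{d}}(M_{i-1})$:
\begin{itemize}
\item the second hypothesis makes $\gr_{\textbf{d}}(M_{i-1})$ stable under $t_{M_i}$;
\item Bia\l{}ynicki-Birula localisation reduces to the fixed locus, exactly as in Proposition \ref{Euler chars};
\item the fixed locus is $\gr_{\textbf{d}}(M_{i-1})\cap\gr_{\textbf{d}}(M_i)$, and the third hypothesis (applied to injective elements of $\Hom(\textbf{d},M_i)\subseteq\Hom(\textbf{d},M_{i-1})$) shows this is all of $\gr_{\textbf{d}}(M_i)$.
\end{itemize}

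Your route never passes through $\gamma$ or $T$-homogeneity, so it does not depend on Proposition \ref{grass and equi}. That proposition is only established for acyclic $Q$, while the theorem as stated has no acyclicity assumption. Your argument also pins down exactly where the third hypothesis is needed, which the paper leaves implicit. The paper's route, in exchange, places the result inside its dictionary with equivariant and nice sequences.

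The step you flagged as the main obstacle is immediate. A subspace of $N_x$ is stable under the loops $n^x_i$ exactly when it is the direct sum of its intersections with the $t_N$-weight spaces. So no passage to a finite stage $\Mod\mauro{j}{Q}$ is required.
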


Typically the quiver Grassmannian on the left will be of strictly smaller dimension than the quiver Grassmannian on the right. In special cases the quiver Grassmannian on the left is zero dimensional and so computing the Euler characteristic becomes a point count. 

We relate the algebraic construction to the prior combinatorial constructions. In particular we relate representations of $\mauro{j}{Q}$ to representations of quivers $S$ along with morphisms of quivers $F:S\rightarrow Q.$ A morphism of quivers $F:S\rightarrow Q$ along with a map $v:S_0\rightarrow \mathbb{Z}$ satisfying certain extra properties gives rise to an embedding of categories \[\rho_{F,v}: \mathrm{Rep}(S,\mathbb{C}) \hookrightarrow \Mod \limmauro{Q}.\] Finally we apply the framework of idempotent loops to preprojective algebras to describe Galois coverings as well as a space of constructible functions. 

\begin{Ack}
    The author was supported by the Additional Funding Programme for Mathematical Sciences via UKRI/EPSRC (EP/V521917/1) and the Heilbronn institute. We would like to thank Alastair King for suggesting a very useful improvement to the presentation of results in this paper. We also thank Ellis Caird for reading this paper and helping with the rewording of many statements. We would like to thank Jan Grabowski and Lewis Topley for giving many helpful corrections which made this paper much easer to read.
\end{Ack}

\section{Representations and actions}\label{section 2}
Given a quiver, $Q = (Q_0, Q_1, s, t)$ the data of a (finite dimensional $\mathbb{C}$) representation of $Q$ will consist of a $\mathbb{C}$ vector space $M_x$ for each $x \in M_x$ and a $\mathbb{C}$-linear map $M_a: M_{s(a)} \rightarrow M_{t(a)}$ for each $a \in Q_1.$ For $x \in Q_0$ and $i \in \mathbb{Z}$ we will denote by $\ell_i^x$ a loop at vertex $x.$ The role of the index $i$ will become important later. Given a quiver $Q$ we denote by $Q^j$ the quiver with $Q^j_0 = Q_0$ and \[Q^j_1 = Q_1 \cup \{\ell_i^x : x \in Q_0, \, -j \leq i \leq j\}.\]

\begin{Example}
    For $Q$ the quiver \[\begin{tikzcd}
	{x} & {y}
	\arrow[from=1-2, to=1-1] 
    \end{tikzcd}\] the quiver $Q^1$ is given by 
    \[\begin{tikzcd}[ampersand replacement=\&]
	{x} \& {y}
	\arrow[loop above, from=1-1, to=1-1, "{\ell_{-1}^x}"]
    \arrow[loop left, from=1-1, to=1-1, "{\ell_{0}^x}"]
    \arrow[loop below, from=1-1, to=1-1, "{\ell_{1}^x}"]
	\arrow[from=1-2, to=1-1]
	\arrow[loop above, from=1-2, to=1-2, "{\ell_{-1}^y}"]
    \arrow[loop right, from=1-2, to=1-2, "{\ell_{0}^y}"]
    \arrow[loop below, from=1-2, to=1-2, "{\ell_{1}^y}"]
    \end{tikzcd}\]  
\end{Example}

\begin{Definition} \label{Maurolico algebra}
    Given a quiver $Q$ and $j \in \mathbb{N}$ we associate to it the algebra
    \[\mauro {j} {Q} = \mathbb{C}Q^j/I\]
    where the ideal $I$ is generated by the following:
    \begin{itemize}
        \item at each vertex $x \in Q_0$ we impose $\ell^{x}_i \ell^{x}_j$ if $i \neq j,$
        \item at each vertex $x \in Q_0$ we impose $\displaystyle\sum_{i = -j}^{j} \ell^{x}_i - e_x$ where $e_x$ is the trivial path of length zero at vertex $x.$
    \end{itemize}

    Given $T \in \mathbb{Z}^{Q_1}$ we define $\tmauro{j} {Q} {T}$ as the quotient \[\tmauro{j} {Q} {T} = \mauro {j} {Q}/I_T\] 
    where $I_T$ is generated by imposing \[\ell^{t(a)}_{i} a \ell^{s(a)}_k\] when $i - k \neq T_a$ for each arrow $a \in Q_1.$ We refer to this condition as \textbf{T-homogeneity}. It follows that $\Mod \tmauro{j} {Q} {T}$ is a full subcategory of $\Mod \mauro {j} {Q}$ closed under submodules and quotients.
\end{Definition}  

We will repeatedly make use the following well known result.

\begin{Proposition}[\protect{\cite[Section XI]{BoS}}]\label{restriction}
  Given a morphism of algebras $f: A \rightarrow B$ then there is a functor $\Mod B \rightarrow \Mod A$ called restriction. If the morphism $f$ is surjective then this is a fully faithful functor, i.e. an embedding. If $f: A \rightarrow B$ is injective then the restriction functor is faithful.  
\end{Proposition}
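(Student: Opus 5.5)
The plan is to make restriction of scalars explicit and then check the two claims directly from the definitions of module homomorphisms. A $B$-module $M$ becomes an $A$-module via $a\cdot m \defeq f(a)m$. A $B$-linear map $\phi\colon M\to N$ is additive and satisfies $\phi(f(a)m)=f(a)\phi(m)$, so it is automatically $A$-linear. Hence restriction sends $\phi$ to the same underlying map, and functoriality, i.e.\ compatibility with identities and composition, is immediate.

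Faithfulness holds for any $f$, and in particular for injective $f$ as stated. The restriction functor does not change the underlying additive map of a morphism. So the induced map $\Hom_B(M,N)\to\Hom_A(\mathrm{res}\,M,\mathrm{res}\,N)$ is the inclusion of a subset of additive maps, and it is therefore injective.

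For fullness, assume $f$ is surjective and let $\psi\colon \mathrm{res}\,M\to\mathrm{res}\,N$ be $A$-linear. Given $b\in B$, pick $a\in A$ with $f(a)=b$. Then
\[\psi(bm)=\psi(f(a)m)=\psi(a\cdot m)=a\cdot\psi(m)=f(a)\psi(m)=b\psi(m),\]
so $\psi$ is $B$-linear. The choice of preimage $a$ plays no role, since the computation only ever uses $f(a)=b$. Thus restriction is fully faithful.

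The statement also calls the functor an embedding. Restriction is moreover injective on objects, because the $B$-action on a module is recovered from its $A$-action through $f$: the element $b=f(a)$ acts as $a$ does. So $\Mod B$ is identified with the full subcategory of $A$-modules on which $\ker f$ acts by zero.

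There is no real obstacle here. The only point needing care is surjectivity in the fullness step, since every $b\in B$ must be written as some $f(a)$. In the paper this is applied to the quotient maps $\mauro{j}{Q}\to\tmauro{j}{Q}{T}$ and $\mathbb{C}Q^j\to\mauro{j}{Q}$. Their images are exactly the modules annihilated by the relevant ideals, and this gives the closure under submodules and quotients asserted after Definition~\ref{Maurolico algebra}.
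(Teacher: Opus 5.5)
Your proof is correct. The paper gives no argument of its own and simply cites \cite[Section XI]{BoS}; your direct check is the standard verification behind that citation: the underlying maps are unchanged, so the functor is always faithful, and surjectivity of $f$ makes every $A$-linear map $B$-linear, identifying $\Mod B$ with the $A$-modules annihilated by $\ker f$. You are also right that faithfulness needs no injectivity hypothesis (for unital $f$, which covers every map the paper uses), so the injectivity of $\mathbb{C}Q \rightarrow \mauro{j}{Q}$ is not what makes $R$ faithful.
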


\begin{Remark}
    The $T$-homogeneity condition says that for $M \in \Mod \tmauro{j}{Q}{T}$ and any $a \in Q_1$ \[M_a(\mathrm{Im}(M_{\ell_i^{s(a)}})) \subseteq \mathrm{Im}((M_{\ell_{i + T_a}^{t(a)}}).\] The maps $M_a$ send images of idempotents to images of idempotents where the index of the idempotent has been shifted by $T_a.$
\end{Remark}

Note that $Q$ is a subquiver of ${Q}^j$ for any $j \in \mathbb{N}.$ This gives rise to an injective algebra morphism $\mathbb{C}Q \hookrightarrow \mathbb{C}Q^j.$ If we compose this embedding with the quotient $\mathbb{C}Q^j \twoheadrightarrow \mauro{j}{Q} \cong \mathbb{C}Q/I$ then we get a morphism of algebras $\mathbb{C}Q \rightarrow \mauro{j}{Q}.$ 
\begin{Proposition}
    The morphism of algebras $\mathbb{C}Q \rightarrow \mauro{j}{Q}$ is injective.
\end{Proposition}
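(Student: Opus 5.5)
The cleanest approach is to produce a left inverse: an algebra morphism $\pi\colon \mauro{j}{Q} \rightarrow \mathbb{C}Q$ whose composite with $\iota\colon \mathbb{C}Q \rightarrow \mauro{j}{Q}$ is the identity on $\mathbb{C}Q$. Since $\mathbb{C}Q^j$ is a path algebra, an algebra morphism out of it is determined by choosing images of the trivial paths $e_x$ and of the arrows, subject only to the compatibility $\phi(a) \in \phi(e_{t(a)})\,B\,\phi(e_{s(a)})$ and the idempotent relations for the $e_x$. I will define $\tilde\pi\colon \mathbb{C}Q^j \rightarrow \mathbb{C}Q$ by $e_x \mapsto e_x$, $a \mapsto a$ for $a \in Q_1$, $\ell^x_0 \mapsto e_x$, and $\ell^x_i \mapsto 0$ for $i \neq 0$. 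Each loop $\ell_i^x$ is sent into $e_x \mathbb{C}Q e_x$, so this gives a well-defined algebra morphism.

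Next I will check that $\tilde\pi$ kills the generators of $I$. For $i \neq k$ one of the indices is nonzero, so $\tilde\pi(\ell^x_i\ell^x_k) = 0$. Also $\tilde\pi\bigl(\sum_{i=-j}^{j}\ell^x_i - e_x\bigr) = e_x - e_x = 0$. Hence $\tilde\pi(I)=0$, and $\tilde\pi$ descends to $\pi\colon \mauro{j}{Q} \rightarrow \mathbb{C}Q$. The composite $\pi\circ\iota$ fixes every $e_x$ and every $a \in Q_1$, and these generate $\mathbb{C}Q$, so $\pi\circ\iota = \mathrm{id}$. Therefore $\iota$ is injective.

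The main point needing care is the first step: making sure the assignment on $\mathbb{C}Q^j$ really is an algebra map. This means ensuring the images of the $e_x$ remain a complete set of orthogonal idempotents, which they do because they are unchanged. The verification on the relations is then immediate.

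An alternative, more representation-theoretic route would be to show that every $\mathbb{C}Q$-module $M$ extends to a $\mauro{j}{Q}$-module by setting $M_{\ell_0^x}=\mathrm{id}$ and the other loops to $0$; this is the $i_0$ construction from the introduction. One could then use the faithfulness of $\mathbb{C}Q$ acting on itself to conclude injectivity, but the retraction argument above already contains this and is shorter.
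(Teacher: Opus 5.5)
Your proof is correct, and it takes a genuinely different route from the paper's.

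\textbf{The paper's route.} The paper argues directly that $\mathbb{C}Q \cap I = 0$. It writes a putative element $c$ of the intersection as $\sum a_{ij}\ell^x_i\ell^x_j + b(e_x - v)$ at a single vertex $x$. It then views this identity in a polynomial ring $\mathbb{C}Q[\ell^x_i]$ and splits it by total degree in the loops. Finally it tracks the coefficient of $(\ell^x_i)^t$ inductively until the top degree forces $c=0$.

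\textbf{Your route.} You construct a retraction $\pi\colon \mauro{j}{Q}\to\mathbb{C}Q$ with $\pi\circ\iota=\mathrm{id}$. Injectivity then reduces to the universal property of path algebras and a two-line check on the generators of $I$.

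\textbf{Robustness.} Your argument handles the full two-sided ideal and all vertices simultaneously. The paper's computation, as written, uses only left multiples of the generators at one vertex. It also treats the loops as variables commuting with coefficients in $\mathbb{C}Q$. That is not literally valid in $\mathbb{C}Q^j$: for an arrow $a$ with $s(a)=x\neq t(a)$, exactly one of $\ell^x_i a$ and $a\ell^x_i$ is zero. So that approach would need further justification to be complete.

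\textbf{Extra structure.} Your $\pi$ is precisely the composite $\mauro{j}{Q}\twoheadrightarrow\mauro{j-1}{Q}\twoheadrightarrow\cdots\twoheadrightarrow\mauro{0}{Q}\to\mathbb{C}Q$. Hence restriction along $\pi$ is the embedding $i_{0,j}\circ i_0$, which puts the identity on $\ell^x_0$ and zero on the other loops, and $R$ is a left inverse of it. For $j=0$, your $\pi$ is an explicit inverse of $\iota$. This yields $\mauro{0}{Q}\cong\mathbb{C}Q$ directly, whereas the paper's proof of that isomorphism relies on the present proposition.

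The elimination approach could in principle say more about which elements of $\mathbb{C}Q^j$ lie in $I$. For injectivity alone, your splitting argument is both shorter and more reliable.
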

\begin{proof}
    It suffices to show that $\mathbb{C}Q \cap I = (0) \in \mathbb{C}Q^j$ for $I$ the ideal $I$ in $\mathbb{C}Q^j/I \cong \mauro{j}{Q}$ where we identify $\mathbb{C}Q$ as a subalgebra of $\mathbb{C}Q^j.$ This ideal is generated by $\ell_i^x \ell_j^x$ for $i \neq j$ and $e-\sum_{i=-j}^j \ell_i^x.$ Suppose we have \begin{equation}\label{idealeq}
    \sum_{i \neq j} a_{ij} \ell_i^x\ell_j^x + b\left(e_x - \sum_{i=-j}^j \ell_i^x\right) = c        
    \end{equation} for $a_{ij}, b \in \mathbb{C}Q^j$ and $c \in \mathbb{C}Q.$ We need to show $c = 0.$ We will write $v = \sum_{i=-j}^j \ell_i^x$ in order to help with notation. Consider the above equation as an equation in the polynomial ring $\mathbb{C}Q[\ell_i^x, i \in \{-j, \cdots, j\}].$ That is to say polynomials in the variables $\ell_i^x$ with coefficients in $\mathbb{C}Q.$ This allows us to consider degrees of monomials. The polynomial ring $\mathbb{C}Q[\ell_i^x, i \in \{-j, \cdots, j\}]$ is naturally $\mathbb{N}$-graded by total degrees of monomials. The equation \ref{idealeq} gives us relations in each graded piece which we now consider. We will write $a = \sum_{i \neq j} a_{ij} \ell_i^x\ell_j^x$ and rearrange to consider $a = c - b(e_x - v)$ giving  
    \begin{equation}\label{gradeq}
    \begin{split}
    0 &= c - b_0, \\
    0 &= b_1 - b_0v,\\
    a_2 &= b_2 - b_1v,\\
    &\cdots \\
    a_{r-1} &= b_{r-1} - b_{r-2}v, \\
    a_r &= -b_{r-1}v     
    \end{split}
    \end{equation} 
    where $r$ is the maximal degree occurring in the equation, $a_i$ is the degree $i$ component of a and $b_i$ is the degree $i$ component of b. We have used the fact that $a$ has no terms in degree 0 or 1 and the fact that the highest degree in $a$ must be given by the highest degree in $-bv.$ The degree zero part of equation \ref{gradeq} tells us $b_0 = c$ so it suffices to show $b_0 = 0.$ Now by the degree 1 relation we see $b_1 = b_0v.$ In particular the coefficient of $\ell_i^x$ in $b_1$ is $b_0.$ In degree 2 we see $a_2 = b_2 - b_1v.$ Since $a = a = \sum_{i \neq j} a_{ij} \ell_i^x\ell_j^x$ then the coefficient of $\left(\ell_i^x\right)^t$ is zero in degree t since any term contains a product of the form $\ell_i^x\ell_j^x$ for $i \neq j.$ In particular, since the coefficient of $\left(\ell_i^x\right)^2$ in $-b_1v = -b_0v^2$ is $-b_0$ then the coefficient of $\left(\ell_i^x\right)^2$ in $b_2$ is $b_0.$ We continually apply this argument inductively to see that from $a_{r-1} = b_{r-1} - b_{r-2}v$ the coefficient of $\left(\ell_i^x\right)^{r-1}$ in $b_{r-1}$ is a non-zero multiple of $b_0.$ Finally consider $a_r = -b_{r-1}v.$ The right hand side has a non-zero coefficient of $\left(\ell_i^x\right)^{r}$ if and only if $a_r$ does. But as described $a_r$ cannot have a non-zero coefficient for this term. This implies $b_0 = c = 0.$
\end{proof}

\begin{corollary}
    The algebra $\mathbb{C}Q$ is isomorphic to a subalgebra of $\mauro{j}{Q}$ for any $j\in \mathbb{N}.$ This gives rise to the restriction functor $R: \Mod \mauro{j}{Q} \rightarrow \Mod \mathbb{C}Q.$ By the above proposition and Proposition \ref{restriction} then this functor is faithful.
\end{corollary}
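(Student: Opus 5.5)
The corollary has three parts: $\mathbb{C}Q$ embeds as a subalgebra of $\mauro{j}{Q}$, this embedding gives a restriction functor $R$, and $R$ is faithful. I will derive each directly from the preceding proposition together with Proposition \ref{restriction}.

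\textbf{Subalgebra.} The preceding proposition shows that the composite $\iota\colon \mathbb{C}Q \hookrightarrow \mathbb{C}Q^j \twoheadrightarrow \mauro{j}{Q}$ is injective. Since $\iota$ is a unital algebra morphism, with $e_x \mapsto e_x$ and $\sum_x e_x$ mapping to $1$, its image $\iota(\mathbb{C}Q)$ is a subalgebra of $\mauro{j}{Q}$. Because $\iota$ is injective, $\iota$ is an isomorphism of $\mathbb{C}Q$ onto this image. This settles the first claim.

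\textbf{The functor $R$.} Applying Proposition \ref{restriction} to $f=\iota\colon \mathbb{C}Q\to\mauro{j}{Q}$ gives the restriction functor $R\colon \Mod \mauro{j}{Q}\to\Mod \mathbb{C}Q$. Concretely, a $\mauro{j}{Q}$-module $M$ is regarded as a $\mathbb{C}Q$-module via $a\cdot m=\iota(a)m$. A module homomorphism is sent to the same underlying linear map.

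\textbf{Faithfulness.} Since $\iota$ is injective, the last clause of Proposition \ref{restriction} applies directly. One can also see this without it. Restriction never changes the underlying linear maps of morphisms, so if $R(f)=R(g)$ then $f=g$ as maps of vector spaces, hence as morphisms.

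The only point needing care is that Proposition \ref{restriction} requires a unital algebra morphism. This holds because the unit of $\mathbb{C}Q$ is $\sum_{x}e_x$, and it maps to the unit of $\mauro{j}{Q}$. I expect no further obstacle, since the substantive content, the injectivity of $\iota$, was already established in the preceding proposition.
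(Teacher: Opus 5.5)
Your proposal is correct and follows the paper's own one-line justification: the preceding proposition shows that $\mathbb{C}Q \to \mauro{j}{Q}$ is injective, which gives the subalgebra, and Proposition~\ref{restriction} applied to this injective map gives the faithful restriction functor $R$. Your extra remark is also right: restriction along any algebra map is faithful, since it leaves the underlying linear maps unchanged. So injectivity is really needed only for the subalgebra claim.
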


\begin{Lemma} \label{module cat}
    A $\mauro {j} {Q}$-module $N$ is a representation $R(N)$ of $Q$ and an orthogonal idempotent decomposition of the identity at each vertex. 
\end{Lemma}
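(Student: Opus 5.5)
The plan is to use the standard equivalence between modules over a path algebra with relations, $\mathbb{C}Q^j/I$, and representations of $Q^j$ satisfying the relations generating $I$. We then read off what those relations say about the loops.

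First, a $\mauro{j}{Q}$-module $N$ is the same as a $\mathbb{C}Q^j$-module annihilated by $I$. By the usual dictionary, a finite dimensional $\mathbb{C}Q^j$-module is a representation of $Q^j$: set $N_x = e_x N$, let each arrow act by the linear map it induces, and let the loops $\ell_i^x$ act by endomorphisms $N_{\ell_i^x}$ of $N_x$. Under this correspondence $N$ is annihilated by $I$ exactly when the representation satisfies the generating relations. These relations are:
\begin{itemize}
\item $N_{\ell_i^x}N_{\ell_k^x}=0$ for $i\neq k$;
\item $\sum_{i=-j}^{j} N_{\ell_i^x} = \mathrm{id}_{N_x}$.
\end{itemize}
Restricting along $\mathbb{C}Q\hookrightarrow\mauro{j}{Q}$ (the Corollary above) keeps the vertex spaces and the maps $N_a$ for $a\in Q_1$, so it yields $R(N)$.

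Second, we check that these two relations say precisely that $\{N_{\ell_i^x}\}_i$ is an orthogonal idempotent decomposition of $\mathrm{id}_{N_x}$. The only point needing argument is idempotency, which is not imposed directly. Multiply $\sum_k N_{\ell_k^x}=\mathrm{id}$ on the left by $N_{\ell_i^x}$. Orthogonality kills the cross terms, leaving $N_{\ell_i^x}^2=N_{\ell_i^x}$. Conversely, any family of pairwise orthogonal idempotents summing to the identity satisfies both relations. So any representation of $Q$ together with such a family at each vertex defines a $\mauro{j}{Q}$-module, and its restriction is the given representation of $Q$.

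Finally, the two constructions are mutually inverse. Indeed, a $\mauro{j}{Q}$-module is determined by the actions of the generators, namely the arrows of $Q$ and the loops. No relation of $I$ mixes loops with arrows of $Q$, so the arrow data and the idempotent data can be chosen independently. This independence is why the correspondence is a bijection rather than just an injection.

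The main obstacle is mild: deriving idempotency from the relations, and making sure that the absence of compatibility conditions between the loops and the arrows of $Q$ is correctly stated. Those compatibility conditions only enter through $I_T$ for $\tmauro{j}{Q}{T}$.
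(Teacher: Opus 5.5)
Your proposal is correct and follows essentially the same route as the paper. Both arguments take $R(N)$ from restriction and read orthogonality and the sum-to-identity condition off the two families of generators of $I$. Both then derive idempotency exactly as you do, via $\ell_i^x = \ell_i^x e_x = \sum_k \ell_i^x\ell_k^x = (\ell_i^x)^2$. Your explicit converse, that any representation of $Q$ with such idempotent families gives a module because no relation in $I$ involves arrows of $Q$, spells out a point the paper leaves implicit.
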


\begin{proof}
    The restriction functor gives us the underlying representation $R(N)$. The second family of generators for the ideal $I$ says that the loops sum to the identity at that vertex. The first set of generators tell us the loops are orthogonal. They are idempotent since $e_x = \sum_i \ell^{x}_i$ therefore $\ell^{x}_i = \ell^{x}_ie_x = \sum_i \ell^{x}_i \ell^{x}_j = \ell^{x}_i \ell^{x}_i.$ 
\end{proof}

\begin{Example}
    Continuing our example with the quiver \[\begin{tikzcd}
    {x} & {y}
    \arrow[from=1-2, to=1-1] 
    \end{tikzcd}\] the algebra $\mauro{1}{Q}$ is given by 
    \[\begin{tikzcd}[ampersand replacement=\&]
    {x} \& {y}
    \arrow[loop above, from=1-1, to=1-1, "{\ell_{-1}^x}"]
    \arrow[loop left, from=1-1, to=1-1, "{\ell_{0}^x}"]
    \arrow[loop below, from=1-1, to=1-1, "{\ell_{1}^x}"]
    \arrow[from=1-2, to=1-1]
    \arrow[loop above, from=1-2, to=1-2, "{\ell_{-1}^y}"]
    \arrow[loop right, from=1-2, to=1-2, "{\ell_{0}^y}"]
    \arrow[loop below, from=1-2, to=1-2, "{\ell_{1}^y}"]
    \end{tikzcd}\] with $\ell_{-1}^x + \ell_0^x + \ell_{1}^x = e_x$ and $\ell_{i}^x\ell_{j}^x =0$ when $i\neq j$ at vertex $x,$ likewise at y. An example of an object $M \in \Mod \mauro{1}{Q}$ is 
    \[\begin{tikzcd}[ampersand replacement=\&]
    {\mathbb{C}} \& {\mathbb{C}^2}
    \arrow[loop above, from=1-1, to=1-1, "{1}"]
    \arrow[loop left, from=1-1, to=1-1, "{0}"]
    \arrow[loop below, from=1-1, to=1-1, "{0}"]
    \arrow[from=1-2, to=1-1, "{\begin{psmallmatrix} 1 & 1 \end{psmallmatrix}}"]
    \arrow[loop above, from=1-2, to=1-2, "{A}"]
    \arrow[loop right, from=1-2, to=1-2, "{B}"]
    \arrow[loop below, from=1-2, to=1-2, "{0}"]
    \end{tikzcd}\] where $A = \frac{1}{2} \begin{psmallmatrix} 1 & 1 \\ 1 & 1 \end{psmallmatrix}$ and $B = {\frac{1}{2} \begin{psmallmatrix} 1 & -1 \\ -1 & 1 \end{psmallmatrix}}.$ Here $R(M)$ is the $Q$ representation \begin{tikzcd}[ampersand replacement=\&]
    {\mathbb{C}} \& {\mathbb{C}^2}.
    \arrow["{\begin{psmallmatrix} 1 & 1 \end{psmallmatrix}}", from=1-2, to=1-1]
    \end{tikzcd}
\end{Example}

Given a rational $\cstar$-action on a vector space $U$ and $i \in \mathbb{Z}$ the subspace consisting of vectors $u \in U$ for which $\lambda \cdot u = \lambda^iu$ for all $\lambda \in \cstar$ is called the $i^{th}$ weight space. A $\cstar$-representation $U$ is the direct sum of its weight spaces.

\begin{Definition}
    Consider $M \in \crep$ of dimension vector $\textbf{m}$ and $t: \cstar \rightarrow \operatorname{G}_{\textbf{m}}.$ The morphism $t$ gives an action of $\cstar$ on $M_{\textbf{m}}$ and let \[j = \mathrm{max}\{|i|: \text{the } i^{th} \text{ weight space of } M_\textbf{m} \text{ is nonzero for the action of } t\}.\] We denote by $M(t) \in \Mod \mauro {j} {Q}$ the representation with underlying $Q$ representation given by $M$ and at vertex $x \in Q_0$ and loop $\ell^{x}_i$ we attach the idempotent projecting onto the $i^{th}$-weight space of $\cstar$ on $M_x.$
\end{Definition}

For $N \in \Mod \mauro {j} {Q}$ we denote by $n_i^x$ the idempotent $N_{\ell_i^x}.$ We obtain \[t_{N}: \cstar \rightarrow \operatorname{G}_{\textbf{n}}\] given by $\lambda \mapsto (t_{N,x}(\lambda))_{x \in Q_0}$ where $t_{N,x}(\lambda) = \displaystyle\sum_{i = -j}^{j} \lambda^i n_i^x.$ We have $t_{N,x}(\lambda) \in \operatorname{GL}_{n_x}(\mathbb{C})$ since \[t_{N,x}(\lambda^{-1}) = t_{N,x}(\lambda)^{-1}.\] 

This gives rise to a natural way to convert between these languages. In particular, give a pair $(M, t)$ we can consider $M(t)$ and for $M \in \Mod \mauro{j}{Q}$ we can consider $(R(N), t_{N}).$ This will allow us to study $\cstar$-actions on $\crep$ via $\Mod \mauro {j} {Q}.$ 

\begin{Lemma} \label{mauroquot}
    For any $j \in \mathbb{N}$ with $j > 0$ there is an isomorphism \[\mauro{j}{Q}/\langle \ell_{-j}^x, \ell_j^x, \forall x \in Q_0\rangle \cong \mauro{j-1}{Q}.\]
\end{Lemma}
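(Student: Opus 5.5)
We prove the isomorphism by writing down algebra maps in both directions using the presentations as quotients of path algebras, and checking that they are mutually inverse. Write $\mauro{j}{Q} = \mathbb{C}Q^j/I_j$ and $\mauro{j-1}{Q} = \mathbb{C}Q^{j-1}/I_{j-1}$. Let $J \subseteq \mauro{j}{Q}$ be the ideal generated by the $\ell_{\pm j}^x$. Then the left-hand side is $\mathbb{C}Q^j/(I_j + \widetilde J)$, where $\widetilde J$ is the ideal of $\mathbb{C}Q^j$ generated by the arrows $\ell_{\pm j}^x$.

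\textbf{Map $\phi$ from left to right.} Define $\Phi\colon \mathbb{C}Q^j \to \mauro{j-1}{Q}$ on arrows:
\begin{itemize}
\item each $a \in Q_1$ and each $\ell_i^x$ with $|i| \le j-1$ goes to its own class;
\item $\ell_{\pm j}^x$ goes to $0$;
\item trivial paths $e_x$ go to $e_x$.
\end{itemize}
Since $\mathbb{C}Q^j$ is free on its quiver, this determines an algebra map. We check that $I_j + \widetilde J$ lies in the kernel.
\begin{itemize}
\item $\widetilde J$ is killed by construction.
\item An orthogonality relation $\ell_i^x\ell_k^x$ with $i\neq k$ either involves an index $\pm j$, so it maps to $0$, or it is a relation of $I_{j-1}$.
\item The completeness relation $\sum_{i=-j}^{j}\ell_i^x - e_x$ maps to $\sum_{i=-(j-1)}^{j-1}\ell_i^x - e_x$, which lies in $I_{j-1}$.
\end{itemize}
Hence $\Phi$ descends to $\phi$ on the left-hand side.

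\textbf{Map $\psi$ from right to left.} Define $\Psi\colon \mathbb{C}Q^{j-1} \to \mathbb{C}Q^j/(I_j+\widetilde J)$ by the natural inclusion of arrows, since $Q^{j-1}$ is a subquiver of $Q^j$. Again we check the relations.
\begin{itemize}
\item Orthogonality relations of $I_{j-1}$ are among those of $I_j$.
\item For completeness, $\sum_{|i|\le j-1}\ell_i^x - e_x$ equals $\bigl(\sum_{|i|\le j}\ell_i^x - e_x\bigr) - \ell_j^x - \ell_{-j}^x$. This is $0$ in the target, because the bracket lies in $I_j$ and $\ell_{\pm j}^x$ lie in $\widetilde J$.
\end{itemize}
So $\Psi$ descends to $\psi$.

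\textbf{Inverse check.} Both composites $\phi\psi$ and $\psi\phi$ fix the classes of all generators. The only nontrivial case is $\ell_{\pm j}^x$: it is $0$ on the left-hand side and maps to $0$. Since the algebras are generated by these classes, the composites are identities.

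The only point requiring care is that completeness for $j$ becomes completeness for $j-1$ precisely after killing $\ell_{\pm j}^x$. This is the key identity used in both directions above. Nothing deeper, such as the injectivity proposition, is needed; I expect no real obstacle. One could also phrase the argument via Lemma \ref{module cat}: modules over the quotient are exactly $\mauro{j}{Q}$-modules whose idempotents $n_{\pm j}^x$ vanish, and these are the same data as $\mauro{j-1}{Q}$-modules. The algebra-level argument above is more direct.
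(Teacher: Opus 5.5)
Your argument is correct and follows the paper's route. Your map $\phi$ is exactly the surjection the paper obtains from the quotient $p\colon \mathbb{C}Q^j \twoheadrightarrow \mathbb{C}Q^{j-1}$ together with $p(I_j)\subseteq I_{j-1}$. Your explicit inverse $\psi$, which uses that $\sum_{|i|\le j-1}\ell_i^x - e_x$ differs from the completeness relation of $I_j$ by $\ell_j^x+\ell_{-j}^x$, supplies a check the paper leaves implicit: that the kernel of this surjection is precisely the ideal generated by the $\ell_{\pm j}^x$.
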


\begin{proof}
     Recall that $\mauro{j}{Q} \cong \mathbb{C}Q^j/I$ where $I$ is generated by $\ell_i^x\ell_j^x$ for $i \neq j$ and $x \in Q_0$ and $e_x - \sum_{i=-j}^j \ell_i^x.$ We have not needed it yet but we will add a subscript $j$ to $I_j$ to show that it is the ideal for $\mauro{j}{Q}.$ Under the natural quotient $p: \mathbb{C}Q^j \twoheadrightarrow \mathbb{C}Q^{j-1}$ given by sending $\ell_j^x$ and $\ell_{-j}^x$ to zero we have $p(I_j) \subseteq I_{j-1}.$ By the universality of cokernels we get a surjection 
     \[\begin{tikzcd}[ampersand replacement=\&]
	{\mathbb{C}Q^j} \&\& {\mathbb{C}Q^{j-1}} \\
	\\
	{\mauro{j}{Q}} \&\& {\mauro{j-1}{Q}.}
	\arrow[two heads, from=1-1, to=1-3]
	\arrow[two heads, from=1-1, to=3-1]
	\arrow[two heads, from=1-3, to=3-3]
	\arrow[dashed, two heads, from=3-1, to=3-3]
    \end{tikzcd}\] This surjection induces the claimed isomorphism.
\end{proof}

\begin{corollary}
    The quotient $\mauro{j}{Q} \twoheadrightarrow \mauro{j-1}{Q}$ gives rise to the restriction functor \[i_j: \Mod \mauro{j-1}{Q} \rightarrow \Mod \mauro{j}{Q}\] for any $j > 0.$ Since it is a surjection then by Proposition \ref{restriction} this is an embedding.
\end{corollary}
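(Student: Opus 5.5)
The corollary has two parts: the existence of the restriction functor $i_j$, and the claim that it is an embedding. Both come from combining Lemma \ref{mauroquot} with Proposition \ref{restriction}.

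Let $\pi_j\colon \mauro{j}{Q} \twoheadrightarrow \mauro{j-1}{Q}$ be the composite of the canonical projection
\[\mauro{j}{Q} \twoheadrightarrow \mauro{j}{Q}/\langle \ell_{-j}^x, \ell_j^x, \forall x \in Q_0\rangle\]
with the isomorphism of Lemma \ref{mauroquot}. It is the dashed map in the diagram of that proof, so it is a surjective algebra morphism.

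Proposition \ref{restriction} then gives a restriction functor $i_j\colon \Mod \mauro{j-1}{Q} \to \Mod \mauro{j}{Q}$. It sends a module $N$ to the same vector space with $a \in \mauro{j}{Q}$ acting by $\pi_j(a)$. Because $\pi_j$ is surjective, the same proposition says $i_j$ is fully faithful. Hence $i_j$ is an embedding, and its essential image is the full subcategory of $\mauro{j}{Q}$-modules annihilated by $\ker \pi_j$.

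The only real point to check is that $\pi_j$ is well defined and surjective. That needs $p(I_j) \subseteq I_{j-1}$, which the proof of Lemma \ref{mauroquot} asserts and which can be verified on generators:
\begin{itemize}
\item For $\ell_i^x\ell_k^x$ with $i \neq k$: $p$ sends it either to $0$ or to a generator of the same form in $I_{j-1}$.
\item For $e_x - \sum_{i=-j}^{j} \ell_i^x$: $p$ sends it to $e_x - \sum_{i=-(j-1)}^{j-1} \ell_i^x \in I_{j-1}$.
\end{itemize}
Surjectivity is inherited from the surjectivity of $p$.

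It is worth recording how $i_j$ looks on representations, via Lemma \ref{module cat}. A $\mauro{j-1}{Q}$-module, namely a $Q$-representation with an orthogonal idempotent decomposition of the identity indexed by $-(j-1),\dots,j-1$, is sent to the same representation with the extra loops $\ell_{\pm j}^x$ acting by $0$. In particular $R \circ i_j = R$ and $t_{i_j(N)} = t_N$, which is consistent with the $\cstar$-action interpretation.
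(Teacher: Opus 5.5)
Your proposal is correct and follows the paper's own argument. The surjective algebra map $\mauro{j}{Q} \twoheadrightarrow \mauro{j-1}{Q}$ comes from Lemma \ref{mauroquot} (it is induced by $p(I_j)\subseteq I_{j-1}$), and Proposition \ref{restriction} then makes restriction along it fully faithful. Your generator-by-generator check of $p(I_j)\subseteq I_{j-1}$, and the observations $R\circ i_j = R$ and $t_{i_j(N)} = t_N$, are valid details that the paper only asserts or uses implicitly (for example in Definition \ref{global R}).
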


\begin{Remark} \label{equiv remark}
    Note that $i_j (\Mod \tmauro {j-1} {Q} {T}) \subseteq \Mod \tmauro {j} {Q} {T}.$
\end{Remark}

We will now also define $i_0.$ 

\begin{Lemma}
    There is an isomorphism $\mauro{0}{Q} \cong \mathbb{C}Q.$
\end{Lemma}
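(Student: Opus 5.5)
For $j=0$ the quiver $Q^0$ is $Q$ with one loop $\ell_0^x$ at each vertex $x$. The ideal $I$ defining $\mauro{0}{Q}$ has no generators of the first kind, since there is no pair $i\neq j$ with $-0\leq i,j\leq 0$. So $I$ is generated only by the elements $\ell_0^x - e_x$ for $x\in Q_0$. The plan is to write down explicit mutually inverse algebra maps between $\mathbb{C}Q$ and $\mathbb{C}Q^0/I$.

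First define $\phi\colon \mathbb{C}Q^0 \to \mathbb{C}Q$ on generators. It is the identity on the vertices $e_x$ and on the arrows $a\in Q_1$, and it sends $\ell_0^x \mapsto e_x$. Since a path algebra is freely generated as a $\mathbb{C}$-algebra by the trivial paths and arrows, subject only to the relations $e_xe_y=\delta_{xy}e_x$, $e_{t(a)}a=a=ae_{s(a)}$ and $\sum_x e_x = 1$, a map on generators extends to an algebra morphism exactly when these relations are respected. The only new thing to check is that the image $e_x$ of the loop $\ell_0^x$ is compatible with its source and target, namely $e_x e_x e_x = e_x$, which holds. Clearly $\phi(\ell_0^x - e_x)=0$, so $I\subseteq \ker\phi$ and $\phi$ descends to $\bar\phi\colon \mauro{0}{Q}\to \mathbb{C}Q$.

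In the other direction, take the composite $\psi\colon \mathbb{C}Q\hookrightarrow \mathbb{C}Q^0\twoheadrightarrow \mauro{0}{Q}$ already considered before the injectivity proposition. Then $\bar\phi\circ\psi=\mathrm{id}_{\mathbb{C}Q}$, since $\phi$ fixes every path of $Q$. It remains to show that $\psi$ is surjective. Every element of $\mauro{0}{Q}$ is a combination of classes of paths in $Q^0$, and modulo $I$ each occurrence of $\ell_0^x$ in such a path may be replaced by $e_x$. This replacement deletes the loop, because the loop lies between two copies of $e_x$. So every class is the image of a path of $Q$, and $\psi$ is surjective. Combined with $\bar\phi\circ\psi=\mathrm{id}$, this shows $\psi$ is an isomorphism. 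Alternatively, injectivity of $\psi$ also follows from the earlier proposition that $\mathbb{C}Q\to\mauro{j}{Q}$ is injective.

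There is no serious obstacle. The only points needing care are that the first family of generators of $I$ is empty when $j=0$, and that $\phi$ is well defined on the path algebra. This isomorphism then lets one define $i_0\colon \Mod \mathbb{C}Q\to \Mod\mauro{0}{Q}$ as restriction along $\mauro{0}{Q}\cong\mathbb{C}Q$, which places the identity map on each loop, matching the introduction.
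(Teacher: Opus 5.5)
Your proof is correct. For surjectivity you argue exactly as the paper does: modulo $I$ the loop $\ell_0^x$ equals $e_x$, so the image of $\mathbb{C}Q$ generates $\mauro{0}{Q}$.

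Where you differ is injectivity. The paper invokes the earlier proposition that $\mathbb{C}Q\to\mauro{j}{Q}$ is injective, whose proof is a degree-counting argument that treats the loops as polynomial variables. You instead build the retraction $\bar\phi$, sending $\ell_0^x\mapsto e_x$, and obtain $\bar\phi\circ\psi=\mathrm{id}$. This makes the lemma self-contained and gives the inverse isomorphism explicitly.

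Your trick also generalises. Sending $\ell_0^x\mapsto e_x$ and $\ell_i^x\mapsto 0$ for $i\neq 0$ kills both families of generators of $I$. It therefore gives a retraction $\mauro{j}{Q}\to\mathbb{C}Q$ for every $j$, which would prove that earlier injectivity proposition in one line.

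The paper's proof is shorter on the page only because it delegates injectivity to that proposition. Yours shows the proposition is not needed.
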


\begin{proof}
    Recall we have an injection $\mathbb{C}Q \hookrightarrow \mauro{0}{Q}.$ Since $\mauro{0}{Q}$ is generated by the arrows in $Q$ and a loop $\ell_x^0$ at each vertex $x \in Q_0$ along with the relation $e_x - \ell_x^0.$ In particular, the subalgebra $\mathbb{C}Q$ generates all of $\mauro{0}{Q}$ and so this map is an isomorphism.
\end{proof}

\begin{corollary} \label{split embedding}
    The map $\mauro{0}{Q} \twoheadrightarrow \mathbb{C}{Q}$ gives a functor $i_{0}: \Mod \mathbb{C}Q \rightarrow \Mod \mauro {0} {Q}.$ Since the map is an isomorphism then this is an equivalence of categories.
\end{corollary}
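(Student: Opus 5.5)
The corollary is a direct consequence of the preceding lemma together with Proposition \ref{restriction}. The plan is to identify the algebra map explicitly, then check that restriction along an isomorphism gives an equivalence.

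First, I would make the map $\mauro{0}{Q} \twoheadrightarrow \mathbb{C}Q$ explicit as the inverse of the isomorphism from the previous lemma. That isomorphism is the composite $\mathbb{C}Q \hookrightarrow \mathbb{C}Q^0 \twoheadrightarrow \mauro{0}{Q}$. Its inverse $\phi$ sends each arrow $a \in Q_1$ to $a$ and each loop $\ell_0^x$ to $e_x$. This assignment is well defined because the relation $e_x - \ell_0^x$ is exactly what it kills. There is no orthogonality relation to check, since only $i = 0$ occurs.

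Since $\phi$ is surjective, Proposition \ref{restriction} shows that the restriction functor $i_0 \colon \Mod \mathbb{C}Q \to \Mod \mauro{0}{Q}$ is fully faithful. Concretely, $i_0(M)$ is $M$ with the identity attached to each loop $\ell_0^x$, matching the description in the introduction. It then remains to prove essential surjectivity.

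For essential surjectivity, I would use that $\phi$ is an isomorphism with inverse $\psi \colon \mathbb{C}Q \to \mauro{0}{Q}$. Restriction along $\psi$ gives a functor $\Mod \mauro{0}{Q} \to \Mod \mathbb{C}Q$; this is exactly the functor $R$. Restriction is contravariantly functorial in the algebra map, so the two composites are restriction along $\phi\psi = \mathrm{id}$ and along $\psi\phi = \mathrm{id}$. Both are therefore literally identity functors, and $i_0$ is in fact an isomorphism of categories. Alternatively, Lemma \ref{module cat} gives the same conclusion directly: an $\mauro{0}{Q}$-module is a $Q$-representation together with a decomposition of the identity into a single idempotent, which must be the identity itself. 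Hence every object is $i_0(R(N))$.

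I expect no real obstacle. The only point needing care is that $\phi$ really is the inverse of the injection $\mathbb{C}Q \to \mauro{0}{Q}$, which the previous lemma already establishes.
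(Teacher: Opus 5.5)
Your proposal is correct and follows the paper's route: the paper deduces the corollary directly from the isomorphism $\mauro{0}{Q} \cong \mathbb{C}Q$ and Proposition \ref{restriction}. Your explicit inverse ($\ell_0^x \mapsto e_x$) and your observation that $R \circ i_0$ and $i_0 \circ R$ are restrictions along identity maps make the paper's one-line justification precise, and in fact show that $i_0$ is an isomorphism of categories.
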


Let $C_i,\, i \in \mathbb{Z}$ be a family of categories and $f_{ij}:C_i \rightarrow C_j$ functors for $i \leq j.$ For $X$ a category and $\phi_i: C_i \rightarrow X$ a functor for each $i \in \mathbb{Z},$ the pair $\langle X, \phi_i\rangle$ is called a target for the direct system $(C_i,f_{ij})$ if $\phi_i = \phi_j f_{ij}.$ The category $X$ is called the colimit of $\langle C_i,f_{ij} \rangle$ if for any other target $\langle Y, \psi_i\rangle$ of $(C_i,f_{ij})$ there is a unique functor up to natural equivalence $u:X\rightarrow Y$ such that $\psi_i= u\phi_i.$ Colimits do not have to exist.

\begin{Definition}
    Let $\widehat{Q}$ be the quiver with $\widehat{Q}_0 = Q_0$ and \[\widehat{Q}_1 = Q_1 \cup\{\ell_i^x: x \rightarrow x, \, \forall x \in Q_0,\, i \in \mathbb{Z}\}.\] Define the full subcategory \[\Mod \limmauro{Q} \subseteq \Mod \mathbb{C}\widehat{Q}\] by the objects in $\Mod \mathbb{C}\widehat{Q}$ such that \begin{itemize}
        \item at each vertex only finitely many maps on the loops are non-zero,
        \item the finite set of non-zero loops at each vertex give an orthogonal decomposition of the identity endomorphism at that vertex. 
    \end{itemize}
\end{Definition}

Given $\Mod \mauro{j}{Q}$ and $\Mod \mauro{k}{Q}$ with $k \geq j$ then we can construct \[i_{j,k} = i_k i_{k-1} \dotsb i_{j+1} \colon \Mod \mauro{j}{Q} \rightarrow \Mod \mauro{k}{Q}.\] This gives us a directed system $(\Mod \mauro{j}{Q}, i_{j,k}).$

\begin{Proposition}
    For the direct system $(\Mod \mauro{j}{Q}, i_{j,k})$ the colimit exists and is equivalent to \[\Mod \limmauro {Q}.\]
\end{Proposition}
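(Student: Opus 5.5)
We construct explicit functors $\phi_j\colon \Mod \mauro{j}{Q} \rightarrow \Mod \limmauro{Q}$ making $\Mod \limmauro{Q}$ a target, and then check the universal property directly. By Lemma \ref{module cat}, a $\mauro{j}{Q}$-module $N$ is a representation $R(N)$ together with an orthogonal idempotent decomposition $(n_i^x)_{-j\le i\le j}$ of the identity at each vertex. Define $\phi_j(N)$ to be the $\mathbb{C}\widehat{Q}$-module with underlying representation $R(N)$, with $\ell_i^x$ acting by $n_i^x$ for $|i|\le j$ and by $0$ for $|i|>j$. On morphisms $\phi_j$ is the identity on the underlying linear maps. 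A $\mauro{j}{Q}$-morphism commutes with the arrows and loops, and commuting with the zero loops is automatic, so $\phi_j$ is a well-defined fully faithful functor. The functor $i_j$ is restriction along the quotient of Lemma \ref{mauroquot}: it sends a $\mauro{j-1}{Q}$-module to the same data with $\ell_{\pm j}^x$ acting by zero. It follows that $\phi_j i_{j-1,j} = \phi_{j-1}$ holds on the nose, and hence $\phi_k i_{j,k} = \phi_j$ for all $j\le k$.

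The key observation is that every object $M \in \Mod \limmauro{Q}$ lies in the image of some $\phi_j$. Only finitely many loops act nonzero at each vertex and $Q_0$ is finite, so there is a $j$ with $M_{\ell_i^x}=0$ for all $|i|>j$. Then $M=\phi_j(M')$, where $M'$ is the restriction to $\mathbb{C}Q^j$; this is a $\mauro{j}{Q}$-module because the nonzero loops form an orthogonal decomposition of the identity. Similarly, any morphism $f\colon M\to N$ in $\Mod \limmauro{Q}$ comes from $\Mod \mauro{k}{Q}$ for $k$ large enough to handle both objects, by fullness of $\phi_k$. So $\Mod \limmauro{Q}$ is the union of the nested full subcategories $\phi_j(\Mod \mauro{j}{Q})$.

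For the universal property, let $\langle Y,\psi_j\rangle$ be another target. Define $u\colon \Mod\limmauro{Q}\to Y$ on an object $M$ by choosing $j(M)$ minimal such that $M=\phi_{j(M)}(M')$, and setting $u(M)=\psi_{j(M)}(M')$. For a morphism $f\colon M\to N$, take $k=\max(j(M),j(N))$ and set $u(f)=\psi_k(f')$. Here $f'$ is the unique preimage of $f$ under $\phi_k$, and we use the identities $\psi_k i_{j(M),k}=\psi_{j(M)}$ to identify sources and targets. Functoriality of $u$ follows by passing to a common large $k$ and using that $\psi_k$ is a functor. By construction $u\phi_j=\psi_j$, because $\phi_j(N')$ with $N'$ a $\mauro{j}{Q}$-module has minimal index $j'\le j$ and $N' = i_{j',j}(N'')$. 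For uniqueness, any $u'$ with $u'\phi_j=\psi_j$ agrees with $u$ on every object and morphism, since all of these lie in some image $\phi_j$.

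The main obstacle is making this bookkeeping rigorous. One must check that the choice of $k$ in the definition of $u(f)$ does not matter; this holds because $\psi_{k'} i_{k,k'}=\psi_k$ and $i_{k,k'}$ is faithful. One must also be careful about what ``unique up to natural equivalence'' means. I would handle both by working with strict equalities of functors, which is possible here because the $i_{j,k}$ and the $\phi_j$ are literally inclusions of full subcategories after identification.
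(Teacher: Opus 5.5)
Your proposal is correct and follows the same route as the paper. Like the paper, you realise each $\Mod \mauro{j}{Q}$ inside $\Mod \mathbb{C}\widehat{Q}$ by restriction along $\mathbb{C}\widehat{Q} \twoheadrightarrow \mauro{j}{Q}$, note the strict compatibility $\phi_j = \phi_{j+1} i_{j+1}$, observe that every object of $\Mod \limmauro{Q}$ lies in some $\mathrm{Im}(\phi_j)$, and set $u(Z) = \psi_j(\phi_j^{-1}(Z))$. You go further than the paper by defining $u$ on morphisms, checking independence of the chosen index, and proving strict uniqueness, all of which the paper leaves implicit. One small correction: in the independence check, the property that does the work is faithfulness of $\phi_{k'}$, not of $i_{k,k'}$.
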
  

\begin{proof}
    By composing the surjections $\mathbb{C}\widehat{Q} \twoheadrightarrow \mathbb{C}Q^j$ and $\mathbb{C}Q^j \twoheadrightarrow \mauro{j}{Q}$ and using \ref{restriction} we get an embedding \[\phi_j: \Mod \mauro {j} {Q} \hookrightarrow \Mod \mathbb{C}\widehat{Q}.\] Since the surjective map $\mathbb{C}\widehat{Q} \twoheadrightarrow \mauro{j}{Q}$ factors via the quotient described in \ref{mauroquot} we see \[\phi_j = \phi_{j+1}i_{j+1}.\] Clearly $\mathrm{Im}(\phi_j) \subset \Mod \limmauro{Q}$ therefore $\langle \Mod \limmauro{Q}, \phi_j\rangle$ is a target for the direct system $(\Mod \mauro{j}{Q}, i_{j,k}).$ 

    Since the functors $\phi_j: \Mod \mauro{j}{Q} \rightarrow \Mod \limmauro{Q}$ are embeddings then there is a functor $\phi_i^{-1}: \mathrm{Im}(\phi_i) \rightarrow \Mod \mauro{i}{Q}.$ Given another target $\langle Y, \psi_i\rangle,$ of $(\Mod \mauro{j}{Q}, i_{j,k})$ every $Z \in \Mod \limmauro{Q}$ is in the image of $\phi_i$ for some $i \in \mathbb{N}$ so we define the functor $u: \limmauro{Q} \rightarrow Y$ by $u(Z) = \psi_i(\phi_i^{-1}(Z))$ for $Z\in \mathrm{Im}(\phi_i).$ This satisfies $\psi_i = u \phi_i.$ 
\end{proof}

\begin{corollary}
    Due to Remark \ref{equiv remark}, imposing the natural definition of $T$-homogeneity on $\Mod \limmauro{Q}$ will then give an analogous construction for $\Mod \tlimmauro {Q} {T}$ in $\mathrm{Idem}(Q).$ 
\end{corollary}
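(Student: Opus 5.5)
The plan is to mirror the proof of the preceding Proposition for the direct system $(\Mod \mauro{j}{Q}, i_{j,k})$, restricted to the $T$-homogeneous parts. First, the structure maps are compatible with $T$: for $j>0$ the surjection $\mauro{j}{Q} \twoheadrightarrow \mauro{j-1}{Q}$ of Lemma \ref{mauroquot} sends the generators $\ell^{t(a)}_{i} a \ell^{s(a)}_k$ (with $i-k \neq T_a$) of $I_T$ either to $0$ (if $|i|=j$ or $|k|=j$) or to the corresponding generator of $I_T$ for $\mauro{j-1}{Q}$. So it descends to a surjection $\tmauro{j}{Q}{T} \twoheadrightarrow \tmauro{j-1}{Q}{T}$. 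By Proposition \ref{restriction} its restriction functor is an embedding, and it agrees with $i_j$ on $\Mod \tmauro{j-1}{Q}{T}$; this is Remark \ref{equiv remark}. Composing gives functors $i_{j,k}$ and a direct system $(\Mod \tmauro{j}{Q}{T}, i_{j,k})$.

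Next, define $\Mod \tlimmauro{Q}{T}$ as the full subcategory of $\Mod \limmauro{Q}$ whose objects $M$ satisfy
\[M_{\ell_i^{t(a)}}\, M_a\, M_{\ell_k^{s(a)}} = 0 \quad \text{whenever } i-k\neq T_a,\ a\in Q_1,\ i,k\in\mathbb{Z}.\]
Equivalently, these are the $\mathbb{C}\widehat{Q}$-modules that lie in $\Mod \limmauro{Q}$ and factor through the quotient of $\mathbb{C}\widehat{Q}$ by these relations. The composite surjection $\mathbb{C}\widehat{Q}\twoheadrightarrow \mauro{j}{Q}\twoheadrightarrow \tmauro{j}{Q}{T}$ then gives, by Proposition \ref{restriction}, embeddings $\phi_j^T$. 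These are just the restrictions of the $\phi_j$ from the previous proof. Since that surjection factors through $\tmauro{j+1}{Q}{T}\twoheadrightarrow\tmauro{j}{Q}{T}$, we get $\phi^T_j=\phi^T_{j+1}i_{j+1}$. Their images land in $\Mod \tlimmauro{Q}{T}$ because the relations hold (for $|i|$ or $|k|>j$ the loop acts by zero). So $\langle \Mod \tlimmauro{Q}{T},\phi_j^T\rangle$ is a target.

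The key step, and the only real obstacle, is exhaustion: every object of $\Mod \tlimmauro{Q}{T}$ lies in the image of some $\phi^T_j$. Given $Z$, only finitely many loops act nonzero at each vertex and $Q_0$ is finite, so all nonzero loops have $|i|\le j$ for some $j$. Then $Z$ is a $\mauro{j}{Q}$-module via Lemma \ref{module cat}, and $T$-homogeneity makes it a $\tmauro{j}{Q}{T}$-module. Here I would check that the loops with $|i|>j$ acting by zero is exactly what is needed for the $\mathbb{C}\widehat{Q}$-action to factor through $\tmauro{j}{Q}{T}$.

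Given this, the universal property follows verbatim as before. For a target $\langle Y,\psi_j\rangle$, set $u(Z)=\psi_j((\phi^T_j)^{-1}(Z))$ for $Z$ in the image of $\phi^T_j$. Because the $\phi^T_j$ are full embeddings compatible with the $i_{j,k}$, this is independent of $j$. Morphisms between two objects both lie in a common $\Mod\tmauro{j}{Q}{T}$, so $u$ is a functor, and it is unique up to natural equivalence with $\psi_j=u\phi^T_j$. Finally, $\Mod \tlimmauro{Q}{T}$ is a full subcategory of $\Mod\limmauro{Q}$ closed under submodules and quotients, since it is cut out by relations.
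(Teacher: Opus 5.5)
Your proposal is correct and follows the route the paper intends. The paper gives no separate proof: it justifies the corollary only by Remark \ref{equiv remark} (that $i_j$ carries $\Mod \tmauro{j-1}{Q}{T}$ into $\Mod \tmauro{j}{Q}{T}$) together with an implicit rerun of the colimit argument for $\Mod \limmauro{Q}$ restricted to $T$-homogeneous modules, which is what you do while also spelling out the steps the paper leaves unstated (descent to $\tmauro{j}{Q}{T} \twoheadrightarrow \tmauro{j-1}{Q}{T}$, exhaustion of $\Mod \tlimmauro{Q}{T}$ by the images of the $\phi^T_j$, and independence of $u$ from the choice of $j$).
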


\begin{Definition} \label{global R}
    We define the functor \[R: \Mod \limmauro {Q} \rightarrow \Mod \mathbb{C}Q\] as the limit of the functors $R: \Mod \mauro{j}{Q} \rightarrow \Mod \mathbb{C}Q.$ That is to say \[R(\phi_i(M)) = R(M).\] This exists and is well defined since $R(i_j(M)) = R(M)$ for $M \in \Mod \mauro{j-1}{Q}.$
\end{Definition}

\begin{Definition}
    Given an algebra $A = \mathbb{C}Q/J$ then we define $\mauro{j}{A} = \mauro{j}{Q}/\tilde{J}$ where $\tilde{J}$ is the two-sided ideal in $\mauro{j}{Q}$ generated by $J.$
\end{Definition}

The quotient $\mathbb{C}Q \twoheadrightarrow A$ allows us to consider $\Mod A$ as a full subcategory of $\Mod \mathbb{C}Q.$ Likewise since $\mauro{j}{A}$ is defined as a quotient of $\mauro{j}{Q}$ then we can consider $\Mod \mauro{j}{A}$ as a full subcategory of $\Mod \mauro{j}{Q}.$ We are implicitly using these embeddings in the following result.

\begin{Lemma}
    The full subcategory $\Mod \mauro{j}{A} \subseteq \Mod \mauro{j}{Q}$ is characterised by \[\Mod \mauro{j}{A} = \{N \in \Mod \mauro{j}{Q}: R(N) \in \Mod A\}.\] 
\end{Lemma}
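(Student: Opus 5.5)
We prove the two inclusions, using that a module over a quotient algebra is precisely a module over the ambient algebra annihilated by the ideal. By the definition of $\mauro{j}{A}$ and Proposition \ref{restriction}, the quotient $\mauro{j}{Q} \twoheadrightarrow \mauro{j}{A}$ identifies $\Mod \mauro{j}{A}$ with the full subcategory of $\mauro{j}{Q}$-modules $N$ satisfying $\tilde{J} N = 0$. Likewise $\Mod A$ is the full subcategory of $\mathbb{C}Q$-modules annihilated by $J$. The claim therefore reduces to showing that, for $N \in \Mod \mauro{j}{Q}$,
\[\tilde{J}N = 0 \iff J\, R(N) = 0.\]

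For the forward direction, suppose $\tilde{J}N = 0$. Since $J \subseteq \tilde{J}$ under the injection $\mathbb{C}Q \hookrightarrow \mauro{j}{Q}$ (the Proposition preceding Lemma \ref{module cat}), every element of $J$ acts by zero on $N$. The $\mathbb{C}Q$-action on $R(N)$ is by definition the restriction of the $\mauro{j}{Q}$-action along this inclusion, so $J\,R(N)=0$ and hence $R(N) \in \Mod A$.

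For the converse, suppose $J\,R(N) = 0$, that is, every $r \in J$ acts by zero on the underlying vector space $N$. Every element of the two-sided ideal $\tilde{J}$ is a finite sum $\sum_k p_k r_k q_k$ with $p_k, q_k \in \mauro{j}{Q}$ and $r_k \in J$. For $n \in N$ we have $p_k r_k q_k n = p_k\,(r_k (q_k n)) = p_k \cdot 0 = 0$, because $q_k n \in N$ and $r_k$ annihilates all of $N$. Therefore $\tilde{J} N = 0$, so $N$ is a $\mauro{j}{A}$-module.

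Both directions are routine. The only step needing care is the bookkeeping of identifications: we must check that "$N$ is a $\mauro{j}{A}$-module" means exactly that $N$ is annihilated by $\tilde{J}$ inside $\Mod \mauro{j}{Q}$. This follows from Proposition \ref{restriction}, since the surjection makes restriction fully faithful with essential image the modules killed by the kernel, and we must check that the kernel is $\tilde{J}$. Morphisms need no separate treatment, since both sides are full subcategories of $\Mod \mauro{j}{Q}$.
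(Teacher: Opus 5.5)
Your proof is correct and is essentially the paper's argument. In both, $N$ is killed by $\tilde{J}$ exactly when it is killed by $J$, because the annihilator of $N$ in $\mauro{j}{Q}$ is a two-sided ideal and $\tilde{J}$ is the two-sided ideal generated by $J$; you just spell this out with elements $\sum_k p_k r_k q_k$, and the kernel check you flag is immediate since $\mauro{j}{A}$ is defined as $\mauro{j}{Q}/\tilde{J}$.
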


\begin{proof}
    First consider $N \in \Mod \mauro{j}{A}.$ By definition $\tilde{J}$ is contained in $\mathrm{Ann}_{\mauro{j}{Q}}(N) = \{r \in \mauro{j}{Q}: rn = 0, \forall n \in N\}.$ It then follows that $J$ is contained in $\mathrm{Ann}_{\mathbb{C}Q}(R(N)) = \{c \in \mathbb{C}Q: cm = 0, \forall m \in R(N)\}.$ The means that $R(N) \in \Mod A.$ Conversely, for $N \in \Mod \mauro{j}{Q}$ such that $R(N) \in \Mod A$ then since $\tilde{J}$ is generated by $J$ as a two-sided ideal in $\mauro{j}{Q}$ then $\tilde{J}$ must be contained in $\mathrm{Ann}_{\mauro{j}{Q}}(N)$ so $N \in \Mod \mauro{j}{A}.$ 
\end{proof}

\begin{Definition}\label{Alg mauro}
    We can use the above results to define the categories $\Mod \limmauro{A}$ and $\Mod \tlimmauro{A}{T}$ as before. These are full subcategories of $\Mod \limmauro{Q}$ and $\Mod \tlimmauro{Q}{T}$ characterised by $R(N) \in \Mod A$ as above. 
\end{Definition}

\section{Morphisms and actions}
\label{Morphisms}
Recall that we can think of a module $N$ in $\Mod \limmauro {Q}$ as a module $R(N)$ of the quiver $Q$ and finitely many non-zero maps $n^{x}_i$ at vertex $x \in Q_0$ which are orthogonal idempotents summing to the identity map on $N_x.$ We use this to study quiver Grassmannians and their Euler characteristics.

\begin{Lemma} \label{embed morph}
    For $\phi \in \Hom_{\limmauro {Q}}(M,N),$ we have $\phi_x = \displaystyle\sum_{i} n^{x}_i \phi_x m^{x}_i \hspace{0.2cm} \forall x \in Q_0.$
\end{Lemma}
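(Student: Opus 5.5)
The plan is to use only the fact that a morphism in $\Mod \limmauro{Q}$ is a morphism of $\mathbb{C}\widehat{Q}$-modules, since $\Mod \limmauro{Q}$ is a full subcategory of $\Mod \mathbb{C}\widehat{Q}$. In particular $\phi$ commutes with the action of every loop $\ell_i^x$. Concretely, for each $x \in Q_0$ and each $i \in \mathbb{Z}$ this gives
\[\phi_x m_i^x = n_i^x \phi_x.\]
By the colimit description we may also pick $j$ large enough that both $M$ and $N$ lie in the image of $\phi_j$. Then only the finitely many indices $-j \leq i \leq j$ give nonzero loops at either module, and all sums below are finite.

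The first step is to write $\phi_x = \phi_x \operatorname{id}_{M_x}$. Lemma \ref{module cat} tells us that the $m_i^x$ form an orthogonal idempotent decomposition of the identity, so $\operatorname{id}_{M_x} = \sum_i m_i^x$. This gives
\[\phi_x = \sum_i \phi_x m_i^x.\]

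The second step rewrites each term. Since $m_i^x$ is idempotent, $\phi_x m_i^x = \phi_x m_i^x m_i^x$. Applying the commutation relation to the first two factors gives $\phi_x m_i^x = n_i^x \phi_x m_i^x$. Summing over $i$ yields the claimed identity
\[\phi_x = \sum_i n_i^x \phi_x m_i^x.\]

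As a check, one can also expand $\phi_x = \operatorname{id}_{N_x}\phi_x \operatorname{id}_{M_x} = \sum_{i,k} n_i^x \phi_x m_k^x$. Commutation gives $n_i^x \phi_x m_k^x = n_i^x n_k^x \phi_x$, and this vanishes for $i \neq k$ by orthogonality, so the same formula results.

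There is no real obstacle here. The only point needing care is to be explicit that morphisms in $\Mod \limmauro{Q}$ intertwine the loop maps, which follows from fullness of the subcategory inside $\Mod \mathbb{C}\widehat{Q}$. One should also note that a loop index which is zero on one module and nonzero on the other causes no trouble, because both sides of the commutation relation then vanish.
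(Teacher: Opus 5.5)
Your proof is correct and is essentially the paper's argument. The paper inserts $\mathrm{Id}_{N_x}=\sum_i n_i^x$ on the left and uses $n_i^x = n_i^x n_i^x$ with $n_i^x\phi_x=\phi_x m_i^x$, while you insert $\mathrm{Id}_{M_x}=\sum_i m_i^x$ on the right and use idempotence of $m_i^x$, which is the mirror image of the same computation.
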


\begin{proof}
    Since $\phi$ is a morphism of representations of quivers then \[n^{x}_i \phi_x = \phi_x m^{x}_i.\] Furthermore, we have \[\displaystyle\sum_{i} n^{x}_i = \mathrm{Id}_{N_x}.\] Apply this as follows \[\begin{aligned} \phi_x = & \left(\sum_i n_i^x\right) \phi_x \\
    = & \sum_i n_i^x \phi_x \\
    = & \sum_i n_i^x n_i^x \phi_x \\
    = & \sum_i n_i^x \phi_x m_i^x.        
    \end{aligned}\]
\end{proof}

\begin{corollary} \label{injective split}
    If $\phi \in \Hom_{\limmauro {Q}}(M,N)$ is injective (surjective) then $n^{x}_i \phi_x m^{x}_i$ is injective (surjective) when considered as a map $n^{x}_i \phi_x m^{x}_i: \mathrm{Im}(m_i^x) \rightarrow \mathrm{Im}(n^{x}_i).$
\end{corollary}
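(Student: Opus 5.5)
We prove Corollary \ref{injective split} by restricting the identity of Lemma \ref{embed morph} to a single weight space. The key identity from the proof of that lemma is the intertwining relation $n_i^x \phi_x = \phi_x m_i^x$ for every $x \in Q_0$ and every $i$. It holds because $\phi$ is a morphism of $\mathbb{C}\widehat{Q}$-modules and the loops $\ell_i^x$ are arrows of $\widehat{Q}$.

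First we check that the map is well defined. By the intertwining relation, for $v \in \mathrm{Im}(m_i^x)$ we have
\[n_i^x \phi_x m_i^x v = \phi_x m_i^x m_i^x v = \phi_x v,\]
and this lies in $\mathrm{Im}(n_i^x)$. So the restricted map $\mathrm{Im}(m_i^x) \rightarrow \mathrm{Im}(n_i^x)$ is simply $\phi_x$ restricted to $\mathrm{Im}(m_i^x)$.

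\emph{Injective case.} If $\phi_x$ is injective, then its restriction to any subspace is injective. By the previous paragraph this restriction is the map in question, so it is injective.

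\emph{Surjective case.} Let $w \in \mathrm{Im}(n_i^x)$, so $w = n_i^x w$. Since $\phi_x$ is surjective, choose $u \in M_x$ with $\phi_x u = w$. Then
\[w = n_i^x w = n_i^x \phi_x u = \phi_x (m_i^x u),\]
and $m_i^x u \in \mathrm{Im}(m_i^x)$. Hence $w$ lies in the image of the restricted map, which is therefore surjective.

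An equivalent way to see both cases at once: by Lemma \ref{embed morph}, $\phi_x$ decomposes as a direct sum of the maps $n_i^x \phi_x m_i^x$ with respect to the decompositions $M_x = \bigoplus_i \mathrm{Im}(m_i^x)$ and $N_x = \bigoplus_i \mathrm{Im}(n_i^x)$. A direct sum of linear maps is injective (surjective) exactly when each summand is.

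Every step is a direct consequence of the idempotent relations; the only real ingredient is the intertwining identity $n_i^x \phi_x = \phi_x m_i^x$, which is already available from Lemma \ref{embed morph}.
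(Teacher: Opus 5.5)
Your proof is correct and is essentially the paper's argument. The paper states this corollary without a separate proof, as an immediate consequence of Lemma \ref{embed morph}: $\phi_x$ is block-diagonal with blocks $n_i^x\phi_x m_i^x$ with respect to $M_x=\bigoplus_i \mathrm{Im}(m_i^x)$ and $N_x=\bigoplus_i \mathrm{Im}(n_i^x)$, which is your closing remark, and your elementwise check via $n_i^x\phi_x=\phi_x m_i^x$ simply spells that out.
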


Recall that we think of the loops as recording a $\cstar$-action on the underlying module. The relation $n^{x}_i \phi_x = \phi_x m^{x}_i$ is equivalent to the morphisms $\phi_x$ being intertwiners for the $\cstar$-action at each vertex. Given a pair $M,N \in \Mod \limmauro{Q},$ there is a natural inclusion $\Hom_{Q}(R(M),R(N)) \subseteq \bigoplus_{x \in Q_0} \Hom_{\mathbb{C}}(M_x,N_x).$ Note that there is a $\cstar$-action on $\bigoplus_{x \in Q_0} \Hom_{\mathbb{C}}(M_x,N_x)$ given by $\lambda \cdot f = t_N(\lambda) f t_M(\lambda)^{-1}.$ We can describe the cases where $\Hom_{Q}(R(M),R(N)) \subseteq \bigoplus_{x \in Q_0} \Hom_{\mathbb{C}}(M_x,N_x)$ is stable under this $\cstar$-action. 

\begin{Lemma} \label{Hom fix}
    For $M,N \in \Mod \tlimmauro {Q} {T}$ the space $\Hom_Q(R(M),R(N))$ is stable under the $\cstar$-action and $\Hom_{\limmauro {Q}}(M,N) = \Hom_Q(R(M),R(N))^{\cstar}.$ 
\end{Lemma}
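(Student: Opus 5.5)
Fix $M,N \in \Mod \tlimmauro{Q}{T}$ and write $m_i^x, n_i^x$ for the idempotents. We have $t_M(\lambda) = \sum_i \lambda^i m_i^x$ and $t_N(\lambda) = \sum_i \lambda^i n_i^x$ at each vertex. The action on $f = (f_x)_x \in \bigoplus_x \Hom_{\mathbb{C}}(M_x,N_x)$ is $\lambda\cdot f = t_N(\lambda) f t_M(\lambda)^{-1}$. The plan is to decompose $f$ blockwise into weight components and check that the quiver relations are compatible with this decomposition. For each $x$, write
\[f_x = \sum_{i,k} n_i^x f_x m_k^x,\]
using $\sum_i n_i^x = \mathrm{Id}$ and $\sum_k m_k^x = \mathrm{Id}$. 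Then $\lambda \cdot f$ has $x$-component $\sum_{i,k}\lambda^{i-k} n_i^x f_x m_k^x$. So the weight-$p$ component of $f$ is $f^{(p)}_x = \sum_{i-k=p} n_i^x f_x m_k^x$, and $f = \sum_p f^{(p)}$ is a finite sum.

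\textbf{Stability.} Let $f \in \Hom_Q(R(M),R(N))$, so $N_a f_{s(a)} = f_{t(a)} M_a$ for each $a \in Q_1$. The key input is $T$-homogeneity, in the form
\[N_a n_k^{s(a)} = n_{k+T_a}^{t(a)} N_a, \qquad M_a m_k^{s(a)} = m_{k+T_a}^{t(a)} M_a.\]
This follows because $\ell^{t(a)}_i a \ell^{s(a)}_k = 0$ unless $i-k = T_a$: multiply $a = \sum_{i,k} \ell_i a \ell_k$ on one side by an idempotent. From these identities,
\[N_a f^{(p)}_{s(a)} = \sum_{i-k=p} n_{i+T_a}^{t(a)} N_a f_{s(a)} m_k^{s(a)}.\]
Substituting $N_a f_{s(a)} = f_{t(a)} M_a$ gives
\[N_a f^{(p)}_{s(a)} = \sum_{i-k=p} n_{i+T_a}^{t(a)} f_{t(a)} m_{k+T_a}^{t(a)} M_a = f^{(p)}_{t(a)} M_a,\]
where the last step reindexes using $(i+T_a)-(k+T_a) = p$. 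Hence each $f^{(p)}$ is a morphism. Since $\lambda\cdot f = \sum_p \lambda^p f^{(p)}$, the space $\Hom_Q(R(M),R(N))$ is stable.

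\textbf{Fixed points.} By the expression for $\lambda\cdot f$, $f$ is fixed if and only if $f = f^{(0)}$, that is, $n_i^x f_x m_k^x = 0$ for all $i \ne k$. This is equivalent to $n_i^x f_x = f_x m_i^x$ for all $i$:
\begin{itemize}
\item if $n_i^x f_x m_k^x = 0$ for $i \ne k$, then $n_i^x f_x = n_i^x f_x \sum_k m_k^x = n_i^x f_x m_i^x$, and symmetrically $f_x m_i^x = n_i^x f_x m_i^x$;
\item conversely, if $n_i^x f_x = f_x m_i^x$, then $n_i^x f_x m_k^x = f_x m_i^x m_k^x = 0$ for $i \ne k$.
\end{itemize}
Commuting with all loops together with the arrows of $Q$ is exactly the condition for $f$ to be a morphism in $\Mod \limmauro{Q}$, which is full in $\Mod \mathbb{C}\widehat{Q}$. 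This is the content of Lemma \ref{embed morph} read in reverse. So the fixed points are exactly $\Hom_{\limmauro{Q}}(M,N)$.

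The main obstacle is the stability step. The delicate point there is deriving the commutation identities $N_a n_k = n_{k+T_a} N_a$ from the relations in $I_T$ and keeping the index shift consistent on both sides. Everything else is bookkeeping with orthogonal idempotents.
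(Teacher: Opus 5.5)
Your proof is correct, and it reaches the result by a somewhat different route from the paper's.

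\textbf{How the paper argues.} It proves stability at the level of the torus. It uses $T$-homogeneity in the form $t_{M,y}(\lambda)M_a t_{M,x}(\lambda)^{-1}=\lambda^{T_a}M_a$, and likewise for $N$. It then checks directly that the square for $t_N(\lambda)\,\phi\,t_M(\lambda)^{-1}$ still commutes. The fixed-point description is asserted in one line from $t_{N,x}(\lambda)=\sum_i\lambda^i n_i^x$.

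\textbf{How you argue.} You use $T$-homogeneity in its idempotent form $N_a n_k^{s(a)}=n_{k+T_a}^{t(a)}N_a$. From it you prove a stronger statement: each weight component $f^{(p)}=\sum_{i-k=p}n_i^x f_x m_k^x$ of a morphism is again a morphism. In other words, $\Hom_Q(R(M),R(N))$ is a graded subspace of $\bigoplus_x\Hom_{\mathbb{C}}(M_x,N_x)$. Stability then follows from $\lambda\cdot f=\sum_p\lambda^p f^{(p)}$. The fixed points are the weight-zero piece, which you identify explicitly with the maps satisfying $n_i^x f_x=f_x m_i^x$.

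\textbf{Comparison.} The paper's route is quicker for the stability claim alone. Yours gives the full weight decomposition of the Hom space along the way, and it supplies the details of the fixed-point identification that the paper leaves implicit. Both arguments rest on the same input, since the two forms of $T$-homogeneity are equivalent: expand $t_N(\lambda)N_a t_N(\lambda)^{-1}=\sum_{i,k}\lambda^{i-k}n_i N_a n_k$ and compare coefficients of powers of $\lambda$.
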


\begin{proof}
    Consider $\phi \in \Hom_Q(R(M),R(N)).$ The commutative square, 
    \[\begin{tikzcd}
	{M_x} && {M_y} \\
	\\
	{N_x} && {N_y}
	\arrow["{M_{a}}", from=1-1, to=1-3]
	\arrow["{\phi_x}"{description}, from=1-1, to=3-1]
	\arrow["{\phi_y}"{description}, from=1-3, to=3-3]
	\arrow["{N_{a}}", from=3-1, to=3-3]
    \end{tikzcd}\]
    gives rise to the square 
    \[\begin{tikzcd}
	{M_x} && {M_y} \\
	\\
	{N_x} && {N_y}
	\arrow["{M_{a}}", from=1-1, to=1-3]
	\arrow["{t_{N,x}(\lambda)\phi_x(t_{M,x}(\lambda))^{-1}}"', from=1-1, to=3-1]
	\arrow["{t_{N,y}(\lambda)\phi_y(t_{M,y}(\lambda))^{-1}}", from=1-3, to=3-3]
	\arrow["{N_{a}}", from=3-1, to=3-3]
    \end{tikzcd}\] under the action of $\lambda \in \cstar.$ Since $M\in \Mod \tlimmauro {Q} {T}$ then $t_{M,y}(\lambda) M_a t_{M,x}(\lambda)^{-1} = \lambda^{T_a}M_a$ and likewise for $N \in \Mod \tlimmauro {Q} {T}.$ This implies the new square commutes since \[\phi_y (t_{M,y}(\lambda))^{-1} M_{a} t_{M,x}(\lambda) = \lambda^{-T_{a}} \phi_y M_{a} = \lambda^{-T_{a}} N_{a}\phi_x =  (t_{N,y}(\lambda))^{-1}N_{a}t_{N,x}(\lambda) \phi_x.\] 

     Recall that $t_{N,x}(\lambda) = \sum_i \lambda^i n_i^x.$ Therefore a morphism is invariant if and only if it lies in the set $\{(\phi_x)_{x \in Q_0}: n_i^x \phi_x = \phi_x m_i^x, \, \forall i, x\}.$ It then follows that $\Hom_{\limmauro {Q}}(M,N)$ is the space of invariant maps for this action.    
\end{proof}

We now give the more general version of the above lemma. 

\begin{Lemma} \label{Hom dec}
    For $M,N \in \Mod \limmauro {Q}$ then \[\Hom_{\limmauro {Q}}(M,N) = \big(\bigoplus_{x \in Q_0} \Hom_{\mathbb{C}} (M_x,N_x)^{\cstar}\big) \cap \Hom_Q(R(M),R(N))\] where all three spaces are implicitly being considered as subspaces of \[\bigoplus_{x \in Q_0} \Hom_{\mathbb{C}} (M_x,N_x).\]
\end{Lemma}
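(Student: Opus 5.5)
The strategy is to show the two inclusions, viewing all three spaces inside $\bigoplus_{x \in Q_0} \Hom_{\mathbb{C}}(M_x,N_x)$. The key preliminary is a description of the fixed points of the $\cstar$-action $\lambda \cdot f = t_N(\lambda) f t_M(\lambda)^{-1}$ on a single summand $\Hom_{\mathbb{C}}(M_x,N_x)$. Decompose $f_x = \sum_{i,k} n_i^x f_x m_k^x$; this is valid because the idempotents at each vertex sum to the identity. Using $t_{N,x}(\lambda) = \sum_i \lambda^i n_i^x$, $t_{M,x}(\lambda)^{-1} = \sum_k \lambda^{-k} m_k^x$ and orthogonality, we get
\[\lambda \cdot f_x = \sum_{i,k} \lambda^{i-k}\, n_i^x f_x m_k^x .\]
Only finitely many indices occur, and the components $n_i^x f_x m_k^x$ are linearly independent (they lie in distinct blocks). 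Comparing coefficients of powers of $\lambda$, $f_x$ is fixed for all $\lambda$ if and only if $n_i^x f_x m_k^x = 0$ whenever $i \neq k$. This is equivalent to $f_x = \sum_i n_i^x f_x m_i^x$, and hence to $n_i^x f_x = f_x m_i^x$ for all $i$ (multiply on the left by $n_i^x$ and on the right by $m_i^x$ respectively).

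For $\subseteq$: take $\phi \in \Hom_{\limmauro{Q}}(M,N)$. It is a morphism of $\mathbb{C}\widehat{Q}$-modules, so it commutes with the arrows of $Q$; equivalently, by restriction, it lies in $\Hom_Q(R(M),R(N))$. It also commutes with the loops, i.e.\ $n_i^x \phi_x = \phi_x m_i^x$, and this is exactly the fixed-point criterion above. Alternatively, Lemma \ref{embed morph} gives directly $\phi_x = \sum_i n_i^x \phi_x m_i^x$, which is the diagonal form.

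For $\supseteq$: let $f$ lie in the intersection. Since $f$ is $\cstar$-fixed, the criterion gives $n_i^x f_x = f_x m_i^x$ for every loop $\ell_i^x$; for indices where both idempotents are zero this holds trivially. Since $f \in \Hom_Q(R(M),R(N))$, it also commutes with every arrow $a \in Q_1$. Hence $f$ commutes with every arrow of $\widehat{Q}$, so it is a morphism in $\Mod \mathbb{C}\widehat{Q}$. Because $\Mod \limmauro{Q}$ is a full subcategory, $f \in \Hom_{\limmauro{Q}}(M,N)$.

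The only step needing care is the fixed-point characterisation. It relies on separating the Laurent polynomial $\sum_{i,k}\lambda^{i-k} n_i^x f_x m_k^x$ into its homogeneous pieces: grouping by the value of $i-k$ and using that infinitely many $\lambda$ distinguish the powers, each group $\sum_{i-k=d} n_i^x f_x m_k^x$ must vanish for $d \neq 0$. Multiplying such a group on the left by $n_i^x$ and on the right by $m_k^x$ then isolates the individual terms. Unlike Lemma \ref{Hom fix}, no $T$-homogeneity is needed, because we intersect with the fixed space rather than claiming that $\Hom_Q(R(M),R(N))$ is stable.
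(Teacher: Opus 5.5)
Your proof is correct and follows the same route as the paper. Both arguments split the commutation conditions for a $\mathbb{C}\widehat{Q}$-morphism into those for the arrows of $Q$, which give $\Hom_Q(R(M),R(N))$, and those for the loops, which give $n_i^x\phi_x=\phi_x m_i^x$, and then identify the loop conditions with $\mathbb{C}^\times$-invariance. The differences are only in presentation: the paper first passes to a finite level $j$ via the colimit and simply asserts that the loop conditions describe the invariant maps, whereas you work directly in the full subcategory of $\mathbb{C}\widehat{Q}$-modules and supply the Laurent-polynomial and block-decomposition argument that the paper leaves implicit.
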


\begin{proof}
    Since $\Mod \limmauro{Q}$ is the colimit of $\Mod \mauro{j}{Q},$ there is a $j \in \mathbb{N}$ such that $\Hom_{\limmauro Q}(M,N) = \Hom_{\mauro{j}{Q}}(M,N).$ By partitioning the arrows in $\mauro{j}{Q}$ into those of $Q$ and the loops $\ell_i^x,$ we see that \[\Hom_{\mauro{j}{Q}}(M,N) = \{(\phi_x)_{x \in Q_0}: n_i^x \phi_x = \phi_x m_i^x, \, \forall i, x\} \cap \Hom_Q(R(M),R(N)).\] The first set of maps is precisely the set of invariant maps.
\end{proof}

\begin{Definition}
    For a representation $M \in \crep$ and dimension vector $\textbf{d} \in \mathbb{N}^{Q_0},$ the quiver Grassmannian $\gr_\textbf{d}(M) \subseteq \prod_x \gr(d_x, M_x)$ is the subvariety consisting of the $(U_x)_{x \in Q_0} \in \prod_x \gr(d_x, M_x)$ such that $M_a(U_{s(a)}) \subseteq U_{t(a)}$ for all $a \in Q_1.$   
\end{Definition}

\begin{Proposition} \label{Grass fix}
    Fix $N \in \Mod \tlimmauro {Q} {T}$ and recall that we have an associated morphism $t_N: \cstar \rightarrow \operatorname{G}_{\textbf{n}}$ which defines a $\cstar$-action on $\bigoplus_x N_x.$ For any $\textbf{d} \in \mathbb{Z}^{Q_0}$ the morphism $t_N$ induces a $\cstar$-action on the quiver Grassmannian $\gr_\textbf{d}(R(N))$ and \[\gr_{\textbf{d}}(N) = \gr_{\textbf{d}}(R(N))^{\mathbb{C}^{\times}}.\] 
\end{Proposition}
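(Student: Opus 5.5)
The plan has two parts: first show that the $\cstar$-action on $\prod_x \gr(d_x,N_x)$ preserves the subvariety $\gr_\textbf{d}(R(N))$, then identify its fixed points with $\gr_\textbf{d}(N)$. Here $\gr_\textbf{d}(N)$ means the $\textbf{d}$-dimensional subrepresentations of $N$ as an object of $\Mod \tlimmauro{Q}{T}$, equivalently of $\Mod \mauro{j}{Q}$ for $j$ large. The action of $\lambda \in \cstar$ on $\prod_x \gr(d_x,N_x)$ is $(U_x)_x \mapsto (t_{N,x}(\lambda)U_x)_x$, and it is algebraic since $t_N$ is a morphism of algebraic groups.

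\emph{Stability.} The key input is the identity used in the proof of Lemma \ref{Hom fix}: $T$-homogeneity gives $t_{N,t(a)}(\lambda) N_a\, t_{N,s(a)}(\lambda)^{-1} = \lambda^{T_a} N_a$ for every $a \in Q_1$. To verify it, expand $t_{N,x}(\lambda) = \sum_i \lambda^i n_i^x$ and use that $n_k^{t(a)} N_a n_i^{s(a)} = 0$ unless $k - i = T_a$. Now let $U \in \gr_\textbf{d}(R(N))$. Then
\[N_a\bigl(t_{N,s(a)}(\lambda)U_{s(a)}\bigr) = \lambda^{-T_a} t_{N,t(a)}(\lambda) N_a U_{s(a)} \subseteq t_{N,t(a)}(\lambda) U_{t(a)},\]
since scaling by $\lambda^{-T_a}$ does not change the subspace. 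Hence $\lambda\cdot U \in \gr_\textbf{d}(R(N))$.

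\emph{Fixed points.} A point of $\gr_\textbf{d}(N)$ is a tuple $(U_x)$ in $\gr_\textbf{d}(R(N))$ that is also stable under every idempotent $n_i^x$. By Lemma \ref{module cat}, this is exactly the condition for being a $\limmauro{Q}$-submodule. So it suffices to show, for a single vertex, that a subspace $U_x \subseteq N_x$ satisfies $t_{N,x}(\lambda)U_x = U_x$ for all $\lambda$ if and only if $n_i^x U_x \subseteq U_x$ for all $i$. In that case $U_x = \bigoplus_i (U_x \cap \mathrm{Im}\, n_i^x)$.
\begin{itemize}
\item If $U_x$ is stable under the $n_i^x$, it is a sum of subspaces of the weight spaces $\mathrm{Im}\, n_i^x$, and $t_{N,x}(\lambda)$ acts on the $i$-th weight space by the scalar $\lambda^i$. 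So $U_x$ is invariant.
\item Conversely, suppose $U_x$ is $\cstar$-stable and take $u \in U_x$. Then $t_{N,x}(\lambda)u = \sum_i \lambda^i n_i^x u \in U_x$ for all $\lambda$. Choose $2j+1$ distinct values of $\lambda$. The resulting Vandermonde matrix is invertible, so each $n_i^x u$ is a linear combination of the vectors $t_{N,x}(\lambda)u$, and therefore lies in $U_x$.
\end{itemize}
This gives $\gr_\textbf{d}(N) = \gr_\textbf{d}(R(N)) \cap \prod_x \gr(d_x,N_x)^{\cstar} = \gr_\textbf{d}(R(N))^{\cstar}$.

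The main obstacle is the Vandermonde step: it is the only place where we need that only finitely many weights occur, which holds because $N$ lies in the image of some $\Mod \mauro{j}{Q}$. Everything else is direct bookkeeping with the idempotents.
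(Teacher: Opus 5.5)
Your proof is correct. It rests on the same fact as the paper's: a subrepresentation is $\mathbb{C}^{\times}$-fixed exactly when it is compatible with the weight decomposition given by the idempotents. The difference is that you argue at the level of subspaces, while the paper argues at the level of morphisms.

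The paper uses the presentation $\gr_{\textbf{d}}(R(N)) \cong \Hom^{0}(\textbf{d},R(N))/G_{\textbf{d}}$.
\begin{itemize}
\item For $\gr_{\textbf{d}}(N) \subseteq \gr_{\textbf{d}}(R(N))^{\mathbb{C}^{\times}}$, it notes that an embedding $U \hookrightarrow N$ intertwines $t_U$ and $t_N$. So post-composing with $t_N(\lambda)$ is the same as acting by $t_U(\lambda) \in G_{\textbf{d}}$, which is trivial in the quotient.
\item For the reverse inclusion, it restricts the action to a fixed $U$, obtains $t \colon \mathbb{C}^{\times} \to G_{\textbf{u}}$, and reads the inclusion as an intertwiner, i.e.\ a morphism $U(t) \to N$.
\end{itemize}
This formulation fits Lemmas~\ref{Hom fix} and~\ref{Hom dec} and the $\Hom(\textbf{d},Y)$-stability language used later.

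However, it leaves implicit two points that you actually prove:
\begin{itemize}
\item that $t_N$ preserves $\gr_{\textbf{d}}(R(N))$ at all, which is your computation with $t_{N,t(a)}(\lambda)N_a t_{N,s(a)}(\lambda)^{-1} = \lambda^{T_a}N_a$;
\item the passage from $\mathbb{C}^{\times}$-stability (or intertwining) to commuting with each $n_i^x$, which is your Vandermonde step, i.e.\ linear independence of characters.
\end{itemize}

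Your version also isolates where $T$-homogeneity enters: only in the stability step. The fixed-point identification by itself gives $\gr_{\textbf{d}}(N) = \gr_{\textbf{d}}(R(N)) \cap \prod_x \gr(d_x,N_x)^{\mathbb{C}^{\times}}$ for every $N \in \Mod \limmauro{Q}$. That is exactly the form in which the proposition is invoked in the proof of Lemma~\ref{splitting up}, where $N$ is not assumed $T$-homogeneous.
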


\begin{proof}
    Define $\Hom^{0}(\textbf{d},R(N))$ to be the space of pairs $(U,f)$ where $U$ is a representation of $Q$ with dimension vector $\textbf{d}$ and $f$ an injective map $f\colon U \hookrightarrow R(N).$ Recall from \cite{CR} that we can identify $\gr_{\textbf{d}}(R(N))$ with the quotient \[\Hom^{0}(\textbf{d},R(N))/G_{\textbf{d}}.\]
    We will act via $\cstar$ by post-composing the maps in $\Hom^{0}(\textbf{d},R(N))$ with $t_N.$ Since embeddings in $\Mod \tlimmauro {Q} {T}$ are intertwiners for the $\cstar$-action, post-composing with the $\cstar$-action is equivalent to acting on the submodule. This acts via a subgroup of $G_{\textbf{d}}$ so is fixed in the quotient. This shows $\gr_{\textbf{d}}(N) \subset \gr_{\textbf{d}}(R(N))^{\cstar}.$ 

    Now fix $\phi = (\phi_x)_{x \in Q_0}$ a series of injective maps in $\Hom^{0}(\textbf{d},R(N))$ as a representative for a $\cstar$-fixed submodule $U \subset R(N).$ Since the image of $\phi_x$ is fixed by the action there is an induced $\cstar$-action on $U$ defined by $\phi(\lambda \cdot u) =\lambda \cdot \phi(u).$ This $\cstar$-action gives rise to a morphism $t: \cstar \rightarrow \mathrm{G}_\textbf{u}.$ Since $(\phi)_{x \in Q_0}$ is, by definition, an intertwiner for this action, $(\phi_x)_{x \in Q_0}$ defines $U(t)$ as a submodule of $N.$ This shows $\gr_{\textbf{d}}(R(N))^{\mathbb{C}^{\times}} \subset \gr_{\textbf{d}}(N)$ completing the proof.
\end{proof}

\begin{Remark}
    More generally, suppose that $\mathcal{F}_{\textbf{d}_1,\textbf{d}_2,\cdots,\textbf{d}_r}(M)$ is a variety of flags of submodules of fixed dimension vectors, an adaptation of the above argument says that for $N \in \Mod \tlimmauro {Q} {T}$ we have $\mathcal{F}_{\textbf{d}_1,\textbf{d}_2,\cdots,\textbf{d}_r}(N) = \mathcal{F}_{\textbf{d}_1,\textbf{d}_2,\cdots,\textbf{d}_r}(R(N))^{\mathbb{C}^\times}.$ 
\end{Remark}

\begin{Example}
    Consider $Q$ to be the quiver with one point, $x,$ and no arrows and fix $Y \in \Mod \mauro{2}{Q}.$ Note that in this case $\mauro{j}{Q} = \tmauro{j}{Q}{T}.$ Fix a dimension $d \in \mathbb{N}.$ We can represent $Y$ as 
    \[\begin{tikzcd}[ampersand replacement=\&]
	{Y_x}
	\arrow["{y_1}", from=1-1, to=1-1, loop, in=145, out=215, distance=10mm]
	\arrow["{y_2}", from=1-1, to=1-1, loop, in=325, out=35, distance=10mm]
    \end{tikzcd}\] where $y_1y_2 = y_2y_1 = 0$ and $\mathrm{Id}_{Y_x} = y_1 + y_2.$ A submodule is the data of $(U_x, u_1, u_2)$ and an injective map $\phi_x$ such that \[\begin{tikzcd}[ampersand replacement=\&]
	{Y_x} \\
	\\
	{U_x}
	\arrow["{y_1}", from=1-1, to=1-1, loop, in=145, out=215, distance=10mm]
	\arrow["{y_2}", from=1-1, to=1-1, loop, in=325, out=35, distance=10mm]
	\arrow["{\phi_x}"', hook, from=3-1, to=1-1]
	\arrow["{u_1}", from=3-1, to=3-1, loop, in=145, out=215, distance=10mm]
	\arrow["{u_2}", from=3-1, to=3-1, loop, in=325, out=35, distance=10mm]
    \end{tikzcd}\] with $y_1\phi_x = \phi_xu_1$ and $y_2\phi_x = \phi_xu_2.$ Note that $y_1\phi_xu_2 = y_2\phi_xu_1 = 0$ since $y_1$ and $y_2$ are orthogonal. It follows that $\phi_x = (y_1+y_2)\phi_x = y_1\phi_x + y_2\phi_x = y_1\phi_xu_1 + y_2\phi_xu_2.$ Denote $a = \dim \mathrm{Im}(u_1)$ and $b = \dim \mathrm{Im}(u_2).$ Since $\phi_x = (y_1+y_2)\phi_x = \phi_x(u_1+u_2)$ and $\phi_x$ is injective then $d = a+b.$ This submodule is in the image of the embedding \[\gr(a, \mathrm{Im}(y_1)) \times \gr(b, \mathrm{Im}(y_2)) \hookrightarrow \gr(a+b, \mathrm{Im}(y_1+y_2)) = \gr(d, Y_x).\] Considering this decomposition for all submodules of dimension $d$ we see 
    \[\gr_d(Y) = \displaystyle\bigsqcup_{a + b = d} \gr(a, \mathrm{Im}(y_1))\times \gr(b, \mathrm{Im}(y_2)) \subseteq \gr(d, Y_x) = \gr_d(R(Y)).\]
\end{Example}

\begin{Example} \label{split}
    Fix the quiver $Q,$ representations $L,M \in \crep$ and let $Y \in \Mod \limmauro {Q}$ be the module such that $R(Y) \cong L\oplus M$ with $Id_{L_x}$ on loop 0 and $Id_{M_x}$ on loop 1. This is in $\Mod \tlimmauro {Q} {T}$ for $T = (0,0,\cdots,0).$ Extending the previous example we see that \[\gr_{\textbf{d}}(Y) \cong  \displaystyle\bigsqcup_{\textbf{f} + \textbf{h} = \textbf{d}} \gr_{\textbf{f}}(L)\times \gr_{\textbf{h}}(M).\] This  subvariety \[\displaystyle\bigsqcup_{\textbf{f} + \textbf{h} = \textbf{d}} \gr_{\textbf{f}}(L)\times \gr_{\textbf{h}}(M) \subseteq \gr_\textbf{d}(L\oplus M)\] is considered when multiplying cluster characters. 
\end{Example} 

\begin{Definition}
    Given a variety, X, over $\mathbb{C}$, we denote by $\chi(X)$ the topological Euler characteristic with compact support of the variety of $\mathbb{C}$-points, i.e. $\chi(X) = \sum_i (-1)^i \dim \mathrm{H}_c^i(X(\mathbb{C}), \mathbb{C}).$
\end{Definition}

\begin{Proposition} \label{Euler chars}
    For $N$ in $\Mod \tlimmauro {Q} {T}$ and any dimension vector $\textbf{d} \in \mathbb{N}^{Q_0},$ then $\chi(\gr_{\textbf{d}}(N)) = \chi(\gr_{\textbf{d}}(R(N))).$
\end{Proposition}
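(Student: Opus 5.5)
The plan is to combine Proposition \ref{Grass fix} with the standard fact that the Euler characteristic (with compact support) of a complex variety carrying an algebraic $\cstar$-action equals the Euler characteristic of its fixed point locus (Bialynicki-Birula). Fix $N \in \Mod \tlimmauro{Q}{T}$ and $\textbf{d} \in \mathbb{N}^{Q_0}$. Since $\Mod \tlimmauro{Q}{T}$ is the colimit of the categories $\Mod \tmauro{j}{Q}{T}$, there is some $j$ with $N$ in the image of $\Mod \tmauro{j}{Q}{T}$, so only finitely many idempotents $n_i^x$ are nonzero. Then $t_N(\lambda) = (\sum_i \lambda^i n_i^x)_x$ is an algebraic morphism $\cstar \rightarrow \mathrm{G}_\textbf{n}$.

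First I will check that the action is algebraic on the ambient variety. The group $\mathrm{G}_\textbf{n}$ acts algebraically on $\prod_x \gr(d_x, N_x)$ by $(U_x) \mapsto (g_x U_x)$. Composing with $t_N$ gives an algebraic $\cstar$-action on this projective variety.

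Second, I will show that $\gr_\textbf{d}(R(N))$ is a $\cstar$-stable closed subvariety; this is the content of Proposition \ref{Grass fix}. Concretely, $T$-homogeneity gives $N_a\, t_{N,s(a)}(\lambda) = \lambda^{-T_a} t_{N,t(a)}(\lambda) N_a$. Hence if $N_a U_{s(a)} \subseteq U_{t(a)}$, then
\[N_a\, t_{N,s(a)}(\lambda) U_{s(a)} = \lambda^{-T_a} t_{N,t(a)}(\lambda) N_a U_{s(a)} \subseteq t_{N,t(a)}(\lambda) U_{t(a)},\]
so the subvariety is stable. Proposition \ref{Grass fix} further identifies the fixed locus with $\gr_\textbf{d}(N)$.

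Third, I will apply the fixed point theorem: for an algebraic $\cstar$-action on a complex variety $X$, we have $\chi(X) = \chi(X^{\cstar})$. The justification is as follows. The complement $X \setminus X^{\cstar}$ is a union of nontrivial orbits, each of the form $\cstar/\mu_k \cong \cstar$. One stratifies $X \setminus X^{\cstar}$ (for example via a geometric quotient on an open dense invariant subset, which exists by Rosenlicht, followed by noetherian induction) into locally closed pieces that fibre over a base with fibre $\cstar$. Additivity and multiplicativity of $\chi$ with compact supports, together with $\chi(\cstar) = 0$, then give $\chi(X \setminus X^{\cstar}) = 0$, hence $\chi(X) = \chi(X^{\cstar})$. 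Applying this with $X = \gr_\textbf{d}(R(N))$ and using the second step gives $\chi(\gr_\textbf{d}(R(N))) = \chi(\gr_\textbf{d}(N))$.

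The only nontrivial ingredient is the fixed point theorem in the third step. In the paper I would cite it as a standard result (Bialynicki-Birula, or the paper's references on torus actions) rather than reprove it. What remains to be careful about is that the identification in Proposition \ref{Grass fix} holds scheme-theoretically, or at least on $\mathbb{C}$-points. It suffices to work on $\mathbb{C}$-points, since $\chi$ is defined on $X(\mathbb{C})$.
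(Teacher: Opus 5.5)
Your proposal is correct and follows the same route as the paper. The paper's proof identifies $\gr_{\textbf{d}}(N)$ with the $\mathbb{C}^{\times}$-fixed locus of $\gr_{\textbf{d}}(R(N))$ via Proposition \ref{Grass fix} and then cites Bialynicki-Birula for $\chi(X)=\chi(X^{\mathbb{C}^{\times}})$. Your explicit stability check from $T$-homogeneity and your sketch of the fixed-point formula fill in details that the paper leaves to that proposition and the citation.
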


\begin{proof}
    Since $\gr_{\textbf{d}}(N)$ is the subvariety of $\cstar$-fixed points of $\gr_\textbf{d}(R(N))$ they have the same Euler characteristic \cite{BB}. 
\end{proof} 

\begin{Proposition}
    Given $U \in \gr_{\textbf{d}}(N)$ for $N \in \Mod \limmauro Q$ it follows that $$\dim T_{U}(\gr_{\textbf{d}}(N)) \leq \displaystyle\sum_{x \in Q_0} \displaystyle\sum_{i \in \mathbb{Z}} \rank(u^{x}_i)(\rank(n^{x}_i) - \rank(u^{x}_i)).$$
\end{Proposition}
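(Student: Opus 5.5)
The plan is to embed $\gr_{\textbf{d}}(N)$ into a product of ordinary Grassmannians of the weight spaces and bound the tangent space there. Write $N_x^i = \mathrm{Im}(n_i^x)$, so $N_x = \bigoplus_i N_x^i$. For $U \in \gr_{\textbf{d}}(N)$, the argument in the Example after Proposition \ref{Grass fix}, together with Corollary \ref{injective split}, shows that a submodule $U \subseteq N$ in $\Mod \limmauro Q$ satisfies $U_x = \bigoplus_i (U_x \cap N_x^i)$. Here $U_x \cap N_x^i = \mathrm{Im}(u_i^x)$ has dimension $\rank(u_i^x)$, because the inclusion intertwines $u_i^x$ with $n_i^x$. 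Hence $\gr_{\textbf{d}}(N)$ is a closed subvariety of the disjoint union, over decompositions $\textbf{d} = \sum_i \textbf{d}^i$ with $\textbf{d}^i_x = \rank(u_i^x)$, of
\[\prod_{x \in Q_0} \prod_{i} \gr\big(d^i_x, N_x^i\big),\]
cut out by the closed conditions $N_a(U_{s(a)}) \subseteq U_{t(a)}$. The map sends $U$ to the tuple $(U_x \cap N_x^i)_{x,i}$, and its inverse is $(V_x^i) \mapsto (\bigoplus_i V_x^i)_x$. I will check that this is an isomorphism onto a locally closed (in fact closed) subvariety, compatible with the scheme structure defined by the quiver Grassmannian equations. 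The cleanest way is to compare functors of points: for a family of submodules over a base $S$, the idempotents split the family into the weight pieces.

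Next I use the standard fact that a closed subscheme $Z \subseteq Y$ has $T_U Z \subseteq T_U Y$, and that $T_V \gr(k,W) \cong \Hom(V, W/V)$ has dimension $k(\dim W - k)$. The tangent space of the product at $(U_x\cap N_x^i)$ therefore has dimension
\[\sum_{x}\sum_i \rank(u_i^x)\big(\rank(n_i^x) - \rank(u_i^x)\big),\]
which gives the stated bound. Alternatively, I could argue directly: $T_U\gr_{\textbf{d}}(N) \cong \Hom_{\limmauro Q}(U, N/U)$ by the usual deformation argument (extending \cite{CR} to $\mathbb{C}\widehat{Q}$-modules, since $\Mod\limmauro Q$ is a full subcategory of a module category). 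By Lemma \ref{Hom dec} and Lemma \ref{embed morph}, any such morphism is $\sum_i (\text{quotient idempotent})\,\phi_x\, u_i^x$, so it lies in $\bigoplus_{x,i}\Hom(\mathrm{Im}\, u_i^x, \mathrm{Im}\, \bar n_i^x)$. This space has exactly the dimension above, since $\rank(\bar n_i^x) = \rank(n_i^x) - \rank(u_i^x)$ for $N/U$.

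The main obstacle is rigor about the scheme structure. The bound concerns Zariski tangent spaces, so I must make sure that $\gr_{\textbf{d}}(N)$, taken as a scheme, agrees with the subscheme of the product of weight-space Grassmannians. The second route avoids this: it identifies $T_U \gr_{\textbf{d}}(N)$ with $\Hom(U,N/U)$ in the category of $\mauro{j}{Q}$-modules for suitable $j$, using the quiver Grassmannian of the bound quiver algebra $\mauro{j}{Q}$, which is standard. I would therefore present that route and then finish by decomposing the Hom-space by weights, dropping the arrow conditions to get the inequality.
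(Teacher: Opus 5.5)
Your second route is exactly the paper's proof, and it is correct: identify $T_U(\gr_{\textbf{d}}(N))$ with $\Hom_{\limmauro{Q}}(U,N/U)$ via \cite{CR}, use Lemma \ref{embed morph} to place this space inside $\bigoplus_{x,i}\Hom_{\mathbb{C}}(\mathrm{Im}(u_i^x),\mathrm{Im}(m_i^x))$ for $M=N/U$, then use $\rank(m_i^x)=\rank(n_i^x)-\rank(u_i^x)$. Your first route, via the product of weight-space Grassmannians, would also work, and you justify the splitting $U_x=\bigoplus_i \mathrm{Im}(u_i^x)$ that underlies that rank identity more explicitly than the paper does.
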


\begin{proof}
    The tangent space of a point in a quiver Grassmannian is known due to \cite{CR}. In particular \[T_{U}(\gr_{\textbf{d}}(N)) \cong \Hom_{\limmauro{Q}}(U,N/U).\] We denote $M = N/U.$ By Lemma \ref{embed morph} we see that \[\dim \Hom_{\limmauro{Q}}(U,M) \leq \dim \displaystyle\sum_{x \in Q_0} \displaystyle\sum_{i \in \mathbb{Z}} \dim  \Hom_{\mathbb{C}} (\mathrm{Im}(u_i^x),\mathrm{Im}(m_i^x)).\] Note that $\dim \mathrm{Im}(m_i^x) = \dim \mathrm{Im}(n_i^x) - \dim \mathrm{Im}(u_i^x)$ since $M = N/U.$ Therefore, \[\dim \Hom_{\mathbb{C}} (\mathrm{Im}(u_i^x),\mathrm{Im}(m_i^x)) = \rank u_i^x (\rank n_i^x - \rank u_i^x).\] This then gives the right hand side of the bound. 
\end{proof}

\begin{Definition}
    We say that $N \in \Mod \limmauro Q$ is primitive if all of the non-zero idempotents, $n_i^x,$ $i \in \mathbb{Z}$ $x \in Q_0$ are primitive (have rank one).
\end{Definition}

\begin{corollary}
    If $N \in \Mod \limmauro Q$ is primitive then $\gr_{\textbf{d}}(N)$ is zero dimensional for all $\textbf{d} \in \mathbb{N}^{Q_0}.$ 
\end{corollary}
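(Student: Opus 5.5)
The plan is to bound the dimension of $\gr_{\textbf{d}}(N)$ at each point by the dimension of its Zariski tangent space, using the preceding proposition. Then we show that for primitive $N$ this tangent space bound is zero at every point. A variety all of whose tangent spaces vanish is a finite set of points, which gives the claim. If $\gr_{\textbf{d}}(N)$ is empty there is nothing to prove.

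\textbf{Step 1: the ranks of the idempotents on $U$.} Fix $\textbf{d}$ and a point $U \in \gr_{\textbf{d}}(N)$, viewed as a submodule in $\Mod \limmauro{Q}$ with idempotents $u_i^x$.
\begin{itemize}
\item By Corollary \ref{injective split}, the inclusion $U \hookrightarrow N$ restricts to an injection $\mathrm{Im}(u_i^x) \hookrightarrow \mathrm{Im}(n_i^x)$ for every $x$ and $i$. Hence $\rank(u_i^x) \leq \rank(n_i^x)$.
\item Since $N$ is primitive, each $\rank(n_i^x)$ is either $0$ or $1$. Therefore $\rank(u_i^x) \in \{0,1\}$, and $\rank(u_i^x) = 0$ whenever $n_i^x = 0$.
\end{itemize}

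\textbf{Step 2: each term of the tangent bound vanishes.} Consider the summand $\rank(u_i^x)\bigl(\rank(n_i^x) - \rank(u_i^x)\bigr)$ of the preceding proposition, and split into cases.
\begin{itemize}
\item If $\rank(u_i^x) = 0$, the summand is $0$.
\item If $\rank(u_i^x) = 1$, then by Step 1 we have $\rank(n_i^x) = 1$. The second factor is then $0$, so the summand is again $0$.
\end{itemize}
Only finitely many $i$ contribute at each vertex. Summing over $x$ and $i$, the preceding proposition gives $\dim T_U(\gr_{\textbf{d}}(N)) = 0$.

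\textbf{Step 3: conclude.} Every irreducible component $Z$ of $\gr_{\textbf{d}}(N)$ has a point $U$ with $\dim Z \leq \dim T_U(\gr_{\textbf{d}}(N))$. By Step 2 this forces $\dim Z = 0$. So $\gr_{\textbf{d}}(N)$ is zero dimensional, and being a projective variety it is a finite set of points.

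The main point needing care is Step 1: the submodule must inherit idempotents whose ranks are bounded by those of $N$. This is exactly what Corollary \ref{injective split} supplies. One could also see it directly from $n_i^x \phi_x = \phi_x u_i^x$ together with the injectivity of $\phi_x$.
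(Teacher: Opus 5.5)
Your proof is correct and is essentially the paper's argument: the corollary is stated without a separate proof as an immediate consequence of the preceding tangent-space bound, and primitivity forces each summand $\rank(u_i^x)\,(\rank(n_i^x)-\rank(u_i^x))$ to vanish, exactly as you show. You make explicit the two small points the paper leaves implicit, namely $\rank(u_i^x)\le\rank(n_i^x)$ (via Corollary \ref{injective split}) and that the local dimension is bounded by the dimension of the Zariski tangent space.
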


\begin{Definition}
    Given $M \in \Mod \mathbb{C}Q$ we say that $M$ has a primitive equivariant lift if there is a $T \in \mathbb{Z}^{Q_1}$ and $N \in \Mod \tlimmauro {Q} {T}$ such that $N$ is primitive and $R(N) \cong M.$
\end{Definition}

\begin{corollary}
   If $M$ has a primitive equivariant lift then $\chi(\gr_{\textbf{d}}(M)) \geq 0,$ $\forall \textbf{d} \in \mathbb{N}^{Q_0}.$    
\end{corollary}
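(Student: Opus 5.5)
The plan is to chain together the corollary on primitive modules and Proposition \ref{Euler chars}. Suppose $M$ has a primitive equivariant lift. Then there is some $T \in \mathbb{Z}^{Q_1}$ and some primitive $N \in \Mod \tlimmauro{Q}{T}$ with $R(N) \cong M$. Quiver Grassmannians of isomorphic representations are isomorphic varieties. Hence for every $\textbf{d} \in \mathbb{N}^{Q_0}$,
\[\chi(\gr_{\textbf{d}}(M)) = \chi(\gr_{\textbf{d}}(R(N))) = \chi(\gr_{\textbf{d}}(N)),\]
where the second equality is Proposition \ref{Euler chars}. That proposition uses $T$-homogeneity: by Proposition \ref{Grass fix}, $\gr_{\textbf{d}}(N)$ is exactly the $\cstar$-fixed locus of $\gr_{\textbf{d}}(R(N))$, and the two have equal Euler characteristic \cite{BB}.

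Next I would use primitivity. By the preceding corollary, which rests on the tangent space bound, $\gr_{\textbf{d}}(N)$ is zero dimensional. If every idempotent $n_i^x$ has rank one, then every $u_i^x$ has rank $0$ or $1$. Each summand $\rank(u_i^x)(\rank(n_i^x)-\rank(u_i^x))$ is therefore $0\cdot 1$ or $1\cdot 0$, so all tangent spaces vanish. Being a closed subvariety of a projective variety, $\gr_{\textbf{d}}(N)$ is then a finite set of points, possibly empty.

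The Euler characteristic with compact support of a finite set of $k$ points is $k \geq 0$. This gives $\chi(\gr_{\textbf{d}}(M)) = \#\gr_{\textbf{d}}(N) \geq 0$.

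The only step that needs any care is the passage from vanishing tangent spaces to finiteness. A variety whose Zariski tangent space vanishes at every point has all local dimensions equal to $0$. Since $\gr_{\textbf{d}}(N)$ is of finite type, it then has finitely many points. Everything else is a direct citation of earlier results.
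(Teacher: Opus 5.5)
Your proposal is correct and matches the argument the paper intends but leaves implicit: the preceding corollary makes $\gr_{\textbf{d}}(N)$ zero dimensional, hence finite, and Proposition \ref{Euler chars} identifies its point count with $\chi(\gr_{\textbf{d}}(M))$. You could also read off finiteness directly from Lemma \ref{splitting up}, since every vector space of $\gamma(N)$ is one-dimensional, but your tangent-space route is fine.
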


We may wonder if this approach could be used to compute Euler characteristics of non-trivial varieties with Euler characteristic zero. The following lemma tells us the answer is no.

\begin{Lemma} \label{no zeros}
    For $N \in \Mod \tlimmauro {Q} {T}$ then $\gr_{\textbf{d}}(R(N))$ is non-empty if and only if $\gr_{\textbf{d}}(N)$ is non-empty.
\end{Lemma}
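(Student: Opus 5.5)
One direction is immediate and the other follows from Proposition \ref{Grass fix} together with the Borel fixed point theorem.

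\emph{Easy direction.} Suppose $\gr_{\textbf{d}}(N)$ is non-empty. By Proposition \ref{Grass fix} we have $\gr_{\textbf{d}}(N) = \gr_{\textbf{d}}(R(N))^{\cstar}$. In particular $\gr_{\textbf{d}}(N) \subseteq \gr_{\textbf{d}}(R(N))$, so the latter is non-empty.

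\emph{Converse.} Suppose $\gr_{\textbf{d}}(R(N))$ is non-empty. By Proposition \ref{Grass fix}, $t_N$ induces a $\cstar$-action on $\gr_{\textbf{d}}(R(N))$, and we must produce a fixed point. Two facts give this.

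\begin{itemize}
\item The variety $\gr_{\textbf{d}}(R(N))$ is projective. It is a closed subvariety of $\prod_x \gr(d_x, N_x)$, since the conditions $N_a(U_{s(a)}) \subseteq U_{t(a)}$ are closed conditions.
\item The action is algebraic. It is the restriction of the action of $\cstar$ on $\prod_x \gr(d_x,N_x)$ given by $\lambda\cdot(U_x)_x = (t_{N,x}(\lambda)U_x)_x$. This is a morphism because $t_{N,x}(\lambda) = \sum_i \lambda^i n_i^x$ is polynomial in $\lambda^{\pm1}$.
\end{itemize}

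Now take any point $U \in \gr_{\textbf{d}}(R(N))$ and consider the orbit map $\cstar \to \gr_{\textbf{d}}(R(N))$, $\lambda \mapsto \lambda\cdot U$. Since the target is projective, hence complete, this map extends to $\mathbb{P}^1$, so the limit $U_0 = \lim_{\lambda\to 0}\lambda\cdot U$ exists in $\gr_{\textbf{d}}(R(N))$. It is $\cstar$-fixed, because the orbit map is equivariant and the extension is compatible with the action. (Equivalently, apply Borel's fixed point theorem to the connected solvable group $\cstar$ acting on a non-empty complete variety.) Hence $\gr_{\textbf{d}}(R(N))^{\cstar} = \gr_{\textbf{d}}(N)$ is non-empty.

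\emph{An explicit alternative.} The limit point can also be computed directly. Write each $U_x$ in terms of weight components, using the filtration of $U_x$ by lowest weight. Then $U_0$ is $\bigoplus_i$ of the images under $n_i^x$ of $U_x \cap \bigoplus_{k \geq i} \mathrm{Im}(n_k^x)$. This has the same dimension $d_x$, and by $T$-homogeneity it is again a subrepresentation. This gives an algebraic proof avoiding completeness arguments.

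The main point needing care is that the action really preserves $\gr_{\textbf{d}}(R(N))$, which uses $N \in \Mod \tlimmauro{Q}{T}$. This was established in Proposition \ref{Grass fix}: by $T$-homogeneity, $t_{N,y}(\lambda) N_a t_{N,x}(\lambda)^{-1} = \lambda^{T_a}N_a$, so subrepresentations are sent to subrepresentations. Given that, the remaining steps are standard.
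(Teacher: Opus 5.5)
Your proof is correct and follows essentially the same route as the paper: the inclusion $\gr_{\textbf{d}}(N) \subseteq \gr_{\textbf{d}}(R(N))$ gives one direction, and for the converse you find a $\cstar$-fixed point on the projective variety $\gr_{\textbf{d}}(R(N))$, which lies in $\gr_{\textbf{d}}(N)$ by Proposition \ref{Grass fix}. The paper takes the fixed point from the boundary of an orbit closure, whereas you take $\lim_{\lambda \to 0} \lambda \cdot U$; this is the same standard argument, and your explicit associated-graded description of the limit is a correct extra.
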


\begin{proof}
    Since $\gr_\textbf{d}(N) \subseteq \gr_\textbf{d}(R(N))$ one implication is obvious. The converse is equivalent to the fact that any $\cstar$-action on a projective variety must have a fixed point. Fix a point $x \in \gr_{\textbf{d}}(R(N))$ and consider $\overline{\cstar \cdot x},$ the closure of its orbit. This is stable under the $\cstar$-action. If we suppose $x$ is not a fixed point, then its orbit is isomorphic to $\cstar.$ Since $\cstar$ is not closed, $\overline{(\cstar \cdot x)}\backslash (\cstar \cdot x)$ is a non-empty zero-dimensional $\cstar$-stable subvariety. Therefore the points in $\overline{(\cstar \cdot x)}\backslash (\cstar \cdot x)$ are fixed points and are therefore in $\gr_{\textbf{d}}(N).$ 
\end{proof}

\section{Equivariance}
In this section we will study the category $\Mod \tlimmauro{Q}{T}$ and show it is equivalent to a category of equivariant modules. Fix $T \in \mathbb{Z}^{Q_1}$ and for $a \in Q_1$ let $\lambda \cdot a = \lambda^{T_a}a.$ This extends to an action of $\cstar$ on the path algebra of Q. 

\begin{Definition}
    We denote by $\qccat$ the category whose objects are pairs $(M, t)$ where $M \in \Mod \mathbb{C}Q$ and $t: \cstar \rightarrow \vecend{M}$ is a morphism of algebraic groups such that $M_x$ is $\cstar$-stable for each vertex $x \in Q_0.$ A morphism in $\qccat$ is then a morphism of the $\mathbb{C}Q$-modules which also intertwines the $\cstar$-actions on these modules.
\end{Definition}

\begin{Lemma}
    Given a $\mathbb{C}Q$-module $M$ and $t: \cstar \rightarrow \vecend{M}$ a morphism of algebraic groups then the pair $(M, t)$ is an object in $\qccat$ if and only if $t: \cstar \rightarrow \vecend{M}$ factors via $\operatorname{G}_\textbf{m}.$
\end{Lemma}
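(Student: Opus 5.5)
The plan is to prove both implications directly from the definitions. I read $\vecend{M}$ as $\mathrm{GL}(M_{\textbf{m}})$, where $M_{\textbf{m}} = \bigoplus_{x \in Q_0} M_x$, and $\operatorname{G}_\textbf{m} = \prod_x \mathrm{GL}(M_x)$ as the subgroup of block-diagonal automorphisms. The block decomposition is taken with respect to the vertex idempotents $e_x$, whose actions on $M_{\textbf{m}}$ are the projections onto the summands $M_x$. The statement then becomes: $t(\lambda)$ preserves every $M_x$ for all $\lambda$ if and only if $t(\lambda)$ is block diagonal for all $\lambda$.

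For the implication that factoring via $\operatorname{G}_\textbf{m}$ gives an object of $\qccat$, there is little to do. If $t(\lambda) = (t_x(\lambda))_x$ with $t_x(\lambda) \in \mathrm{GL}(M_x)$, then $t(\lambda)(M_x) = t_x(\lambda)(M_x) = M_x$. So each $M_x$ is $\cstar$-stable and $(M,t)$ is an object of $\qccat$.

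For the converse, suppose each $M_x$ is $\cstar$-stable, so $t(\lambda)(M_x) \subseteq M_x$ for every $\lambda$ and $x$. Since $t(\lambda)$ is invertible and $M_x$ is finite dimensional, this inclusion is an equality, and $t(\lambda)|_{M_x} \in \mathrm{GL}(M_x)$. Writing $t(\lambda)$ in block form with respect to $M_{\textbf{m}} = \bigoplus_x M_x$, the off-diagonal blocks $e_y t(\lambda) e_x$ for $y \neq x$ vanish. Hence $t(\lambda) = \sum_x e_x t(\lambda) e_x$ lies in $\operatorname{G}_\textbf{m}$.

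It remains to check that the induced map $\cstar \rightarrow \operatorname{G}_\textbf{m}$ is a morphism of algebraic groups. It is a group homomorphism because $t$ is and $\operatorname{G}_\textbf{m}$ is a subgroup. It is a morphism of varieties because $\operatorname{G}_\textbf{m}$ is a closed subgroup, cut out by the vanishing of the off-diagonal entries, and a morphism landing in a closed subvariety factors through it. Equivalently, each component $\lambda \mapsto e_x t(\lambda) e_x$ is given by regular functions.

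The only point needing care is this last regularity and closed-subgroup remark, together with using invertibility to upgrade $t(\lambda)(M_x) \subseteq M_x$ to equality. Neither is a real obstacle, so the whole proof is a short verification.
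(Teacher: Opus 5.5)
Your proof is correct and follows the same route as the paper: the forward direction is immediate, and for the converse $t$ lands in the automorphisms preserving each $M_x$, which is exactly $\operatorname{G}_\textbf{m}$. You also spell out details the paper leaves implicit, namely invertibility upgrading $t(\lambda)(M_x)\subseteq M_x$ to equality, the vanishing of off-diagonal blocks, and regularity of the corestriction to the closed subgroup.
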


\begin{proof}
    If $t$ factors via $\operatorname{G}_\textbf{m}$ then $M_x$ is $\cstar$-stable for each $x \in Q_0.$ Conversely, if $M_x$ is $\cstar$-stable for each $x \in Q_0$ then the morphism $t: \cstar \rightarrow \vecend{M}$ must factor via the subspace $\{g\in \vecend{M}: g(M_x) = M_x\} \subseteq \vecend{M}.$ But this subset is precisely $\operatorname{G}_\textbf{m}.$ 
\end{proof}

\begin{Definition}
    Given $M \in \Mod A$ and $G$ an algebraic group acting rationally on both $M$ and $A,$ we say $M$ is a G-equivariant $A$-module if the action map $$A \times M \rightarrow M$$ intertwines the G-actions. That is to say $(g \cdot a) \cdot (g \cdot m) = g \cdot (a \cdot m).$
\end{Definition}

\begin{Theorem} \label{equivariant}
    The category $\Mod \limmauro{Q}$ is equivalent to the category $\qccat.$
    The category of $\mathbb{C}^{\times}$-equivariant modules over $\mathbb{C}Q$ with the action induced by T is equivalent to $\Mod \tlimmauro {Q} {T}.$
\end{Theorem}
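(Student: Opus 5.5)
We will construct mutually inverse functors between $\Mod \limmauro{Q}$ and $\qccat$, using the dictionary already set up in Section \ref{section 2}. In one direction, send $N \in \Mod \limmauro{Q}$ to the pair $(R(N), t_N)$. Here $t_{N,x}(\lambda) = \sum_i \lambda^i n_i^x$ is a morphism of algebraic groups $\cstar \rightarrow \operatorname{G}_{\textbf{n}}$, so $(R(N),t_N)$ is an object of $\qccat$ by the lemma characterising objects of $\qccat$ as those $t$ factoring through $\operatorname{G}_\textbf{m}$.

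In the other direction, send $(M,t)$ to $M(t)$, which attaches at each vertex the projections onto the weight spaces. Two facts make this well defined:
\begin{itemize}
\item the representation $t$ is rational, so $M_x$ is the direct sum of finitely many weight spaces;
\item each $M_x$ is $\cstar$-stable, so the weight spaces of $M_{\textbf{m}}$ restrict to $M_x$.
\end{itemize}
Hence at each vertex only finitely many loops are nonzero, and these form an orthogonal decomposition of the identity. The composite $N \mapsto M(t_N)$ returns $N$ on the nose, because the $i$th weight space of $t_{N,x}$ is exactly $\mathrm{Im}(n_i^x)$; this uses the fact that the $n_i^x$ are orthogonal idempotents summing to the identity. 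Conversely, $t_{M(t)} = t$, since a rational $\cstar$-action is determined by its weight decomposition.

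On morphisms both functors act as the identity on the underlying family $(\phi_x)_{x\in Q_0}$, so the only thing to check is that the two Hom-spaces agree. By Lemma \ref{Hom dec}, $\Hom_{\limmauro{Q}}(M,N)$ is the set of $\mathbb{C}Q$-morphisms satisfying $n_i^x\phi_x = \phi_x m_i^x$ for all $i,x$. This condition is equivalent to $t_{N,x}(\lambda)\phi_x = \phi_x t_{M,x}(\lambda)$ for all $\lambda$: one direction is immediate, and for the other we compare coefficients of $\lambda^i$ after multiplying by the idempotents, using that distinct characters of $\cstar$ are linearly independent. The intertwining condition is precisely the defining condition for a morphism in $\qccat$, which gives the first equivalence.

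For the second statement, an object $(M,t)$ is a $\cstar$-equivariant $\mathbb{C}Q$-module for the action $\lambda\cdot a = \lambda^{T_a}a$ exactly when
\[ t(\lambda)\, M_a\, t(\lambda)^{-1} = \lambda^{T_a} M_a \quad \text{for all } a \in Q_1 \]
holds, together with equivariance for the trivial paths $e_x$. Equivariance for the $e_x$ is exactly the requirement that each $M_x$ be $\cstar$-stable, because $\lambda\cdot e_x = e_x$.

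The main step is to show that this condition is equivalent to $T$-homogeneity of $M(t)$, namely $m_i^{t(a)} M_a m_k^{s(a)} = 0$ whenever $i-k\neq T_a$. Expanding gives
\[ t(\lambda) M_a t(\lambda)^{-1} = \sum_{i,k} \lambda^{i-k}\, m_i^{t(a)} M_a m_k^{s(a)}. \]
Comparing the coefficients of powers of $\lambda$ (again by linear independence of characters, after sandwiching between idempotents) shows that the display holds if and only if the components with $i-k\neq T_a$ vanish. Equivariance on arrows and trivial paths extends to all paths, since the action on $\mathbb{C}Q$ is by algebra automorphisms. Morphisms of equivariant modules are the intertwining module maps, so the second equivalence is the restriction of the first one to the full subcategory $\Mod \tlimmauro{Q}{T}$.

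The step needing the most care is not conceptual. It is the bookkeeping that the definition of $\qccat$ (a $t$ into $\vecend{M}$ with vertex spaces stable) matches finiteness and completeness of the idempotent decomposition, and that the correspondence is strictly functorial rather than only defined up to a choice.
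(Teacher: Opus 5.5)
Your proposal is correct and follows essentially the same route as the paper. Both use the mutually inverse functors $N\mapsto (R(N),t_N)$ and $(M,t)\mapsto M(t)$, invoke Lemma \ref{Hom dec} to match morphisms with intertwiners, and reduce equivariance to stability of each $M_x$ (from the $e_x$) together with $t_{M,t(a)}(\lambda)M_a t_{M,s(a)}(\lambda)^{-1}=\lambda^{T_a}M_a$ on arrows. You additionally spell out the coefficient comparison in $\sum_{i,k}\lambda^{i-k}m_i^{t(a)}M_a m_k^{s(a)}$ and the extension from arrows to all paths, which the paper compresses into ``precisely the condition for $T$ homogeneity''.
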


\begin{proof}
    Recall that by Lemma \ref{Hom dec} \[\Hom_{\limmauro {Q}}(M,N) = \big(\bigoplus_{x \in Q_0} \Hom (M_x,N_x)^{\mathbb{C}^{\times}}\big) \cap \Hom(R(M),R(N)).\] In particular, given a morphism $f \in \Hom_{\limmauro {Q}}(M,N)$ then $R(f): R(M) \rightarrow R(N)$ defines a morphism from $(R(M), t_M)$ to $(R(N), t_N).$ Likewise, given a morphism $g:(M, t_1) \rightarrow (N, t_2)$ in $\qccat$ then under Lemma \ref{Hom dec} we can identify this with a morphism $G(g): M(t_1) \rightarrow N(t_2)$ such that $R(G(g)) = g.$ Furthermore we see that $G(R(f)) = f.$ This allows us to define the following two functors.
    
    Let $F\colon\Mod \limmauro{Q} \rightarrow \qccat$ be the functor defined by $F(M) = (R(M), t_M)$ on objects and $F(f) = R(f)$ for morphisms. Furthermore, define $G\colon \qccat \rightarrow \Mod \limmauro{Q}$ by $G(N,t) = N(t)$ on objects and $G(f)$ is defined as above for morphisms. These functors give us the equivalence of categories since $GF = \mathrm{Id}$ and $FG = \mathrm{Id}.$ 
    
    It remains to show that equivariance with respect to the action induced by $T$ on $\mathbb{C}Q$ is equivalent to being in $\Mod \tlimmauro {Q} {T}.$ Firstly, we see that if $M$ is an equivariant module then this implies $e_x(\lambda \cdot m) = \lambda \cdot (e_x\cdot m).$ Recall that a morphism in $\qccat$ is a morphism of the underlying $\mathbb{C}Q$-modules which interwines the $\cstar$ actions. A morphism in the equivariant category is a morphism of the underlying $\mathbb{C}Q$-modules which interwines the $\cstar$-actions. This allows us to consider morphism in the equivariant category as morphisms in $\qccat$ and vice versa. In particular, there is an embedding of the category of equivariant modules into $\qccat$ given by sending $M$ with its action $t_M$ to the pair $(M, t_M)$ and morphisms in the category of equivariant modules are sent to the corresponding $\cstar$-intertwiner in $\qccat.$

   Consider $a \in M_x$ and $b \in Q_1$ with $s(b) = x,$ $t(b) = y.$ We want to show that for any $a, b$ as above then equivariance is equivalent to $\lambda^{T_b}M_b = t_{M,y}(\lambda) M_b t_{M,x}(\lambda)^{-1}.$ First, observe 
   \begin{align*}(\lambda \cdot b) \cdot (\lambda \cdot a) &= \lambda^{T_b} b \cdot (t_{M,x}(\lambda) a) \\ &= \lambda^{T_b}M_b(t_{M,x}(\lambda)a).
   \end{align*}

   On the other hand 
   \begin{align*}
       \lambda \cdot (b \cdot a) &= \lambda \cdot M_b(a) \\
       &= t_{M,y}(\lambda)M_b(a).
   \end{align*}

   Equivariance is equivalent to equality \[\lambda^{T_b}M_b(t_{M,x}(\lambda)a) = t_{M,y}(\lambda)M_b(a)\] for any $a \in M_x$ and $b \in Q_1$ with $t(b) = y,$ $s(b) = x.$ By first applying $t_{M,x}(\lambda)^{-1}$ to $M_x$ we this is the condition that \[\lambda^{T_b}M_b(a) = t_{M,y}(\lambda)M_b(t_{M,x}(\lambda)^{-1}a)\] for any $a \in M_x.$ In particular, it is equivalent to $\lambda^{T_b}M_b = t_{M,y}(\lambda)M_bt_{M,x}(\lambda)^{-1}$ which is precisely the condition for $T$ homogeneity.
\end{proof}

\section{Idempotents, gradings, and coefficient quivers}
Given $N \in \Mod \tlimmauro {Q} {T}$ we have shown $\chi(\gr_{\textbf{d}}(N)) = \chi(\gr_{\textbf{d}}(R(N))).$ In particular, we are interested in the case where $N$ is primitive equivariant, i.e. $\gr_\textbf{d}(N)$ is finite, since then $|\gr_{\textbf{d}}(N)| = \chi(\gr_{\textbf{d}}(R(N))).$

\begin{Definition}
    Given a representation $M \in \crep$ and $B = \cup_{x \in Q_0} B_x$ where $B_x$ is a basis at vertex $x,$ we call a map $\partial : B \rightarrow \mathbb{Z}$ a grading. 
\end{Definition}

Note that these gradings are not required to respect any gradings on the algebra $\mathbb{C}Q.$ These are not equivalent to a grading on $\mathbb{C}Q$ in the standard sense.

A grading, $\partial,$ for a basis of $M \in \crep$ defines a collection of idempotents for $M$ as follows. For $B_x= \{b_{1}, \cdots, b_{r}\}$ the basis at vertex $x \in Q_0$ let the idempotent associated to loop $j$ be the idempotent projecting onto the subspace given by $\spn_{\mathbb{C}}\{b_k: \overline{\partial}(b_k) = j\}$ and such that all of the idempotents at any given vertex are orthogonal. These two conditions together uniquely determine the choices of idempotents at each vertex. Likewise, given a representation $M \in \Mod \limmauro Q,$ we can recover a grading by picking a basis of each $\mathrm{Im}(m_i^x)$ and giving them a grading such that $\partial(b_k) = j$ if $b_k \in \mathrm{Im}(m_j^x)$ for some $x \in Q_0.$ Notice that since $\sum_i m_i^x = \mathrm{Id}_{M_x}$ for any $x\in Q_0$ then this process genuinely defines a basis of $M_x$ at each vertex.

\begin{Lemma} \label{cerulli-irelli}
    A representation $M \in \crep$ has a primitive equivariant lift if and only if there is a basis $B = \cup_x B_x$ and a grading $\partial$ such that the map $\partial_x\colon B_x \rightarrow \mathbb{Z}$ is injective and for every arrow $a\colon i\rightarrow j$ and every $b_k, b_t \in B_i,$ the grading must satisfy the following two conditions:
    \begin{itemize}
        \item $M_a (b_k)$ is a multiple of a basis vector in $B_j$ ,
        \item given $b_r,b_t \in B_i$ and $a\colon i\rightarrow j$ such that $M_{a}(b_r) \neq 0$ and $M_{a}(b_t) \neq 0,$ $$\partial(M_{a}(b_r)) - \partial(M_{a}(b_t)) = \partial(b_r) - \partial(b_t).$$
    \end{itemize}
\end{Lemma}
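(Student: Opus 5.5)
The plan is to translate both directions through the dictionary between gradings and idempotent loops set up just before the statement. A primitive equivariant lift is an $N \in \Mod \tlimmauro{Q}{T}$, for some $T \in \mathbb{Z}^{Q_1}$, with every nonzero $n_i^x$ of rank one and $R(N) \cong M$. By Lemma \ref{module cat} and the definition of $\Mod \limmauro{Q}$, such an $N$ is the same as $M$ together with a direct sum decomposition $M_x = \bigoplus_i \mathrm{Im}(n_i^x)$ at each vertex. Primitivity says each summand is a line, and distinct lines at a vertex carry distinct indices $i$. The $T$-homogeneity condition, in the form of the Remark after Proposition \ref{restriction}, reads $M_a(\mathrm{Im}(n_i^{s(a)})) \subseteq \mathrm{Im}(n_{i+T_a}^{t(a)})$.

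For the forward direction, given $N$, pick a nonzero vector $b$ in each nonzero line $\mathrm{Im}(n_i^x)$ and set $\partial(b)=i$. This gives a basis $B_x$ of $M_x$, since the idempotents sum to the identity. The map $\partial_x$ is injective because each index occurs for at most one line, by primitivity. For an arrow $a\colon i \to j$ and $b_k \in B_i$, homogeneity places $M_a(b_k)$ in the line $\mathrm{Im}(n^{j}_{\partial(b_k)+T_a})$. That line is either zero or spanned by a single basis vector, so $M_a(b_k)$ is a multiple of a basis vector. When $M_a(b_k) \neq 0$ we get $\partial(M_a(b_k)) = \partial(b_k)+T_a$, where $\partial$ of a nonzero multiple of a basis vector means the grade of that vector. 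Subtracting this for two such basis vectors $b_r,b_t$ eliminates $T_a$ and yields the second condition.

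For the converse, start from $(B,\partial)$ and let $n_i^x$ be the projection onto $\spn\{b\in B_x : \partial(b)=i\}$ along the other basis vectors. Injectivity of $\partial_x$ makes each of these rank at most one. Only finitely many are nonzero, they are orthogonal, and they sum to the identity, so we obtain a primitive $N \in \Mod\limmauro{Q}$ with $R(N)=M$. It remains to construct $T$. For each arrow $a$, if $M_a=0$ take $T_a=0$. Otherwise the second condition says that $\partial(M_a(b)) - \partial(b)$ is the same for all $b$ with $M_a(b)\neq 0$, and we call this common value $T_a$. The first condition guarantees that $M_a(b)$ is a scalar multiple of a basis vector $b'$ with $\partial(b') = \partial(b)+T_a$. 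Hence $M_a$ sends $\mathrm{Im}(n_i)$ into $\mathrm{Im}(n_{i+T_a})$ when $M_a(b)\neq 0$, and trivially when $M_a(b)=0$. This is exactly $T$-homogeneity, so $N\in\Mod\tlimmauro{Q}{T}$.

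The main obstacle is bookkeeping rather than depth. We must ensure $\partial(M_a(b))$ is well defined, since a nonzero multiple of a basis vector determines that basis vector uniquely. We must also handle arrows killing some basis vectors, where $T_a$ is unconstrained on those vectors. Finally, the statement's quantifier ``for every $b_k,b_t$'' should be read over the source basis of each arrow, with an empty constraint when at most one image is nonzero.
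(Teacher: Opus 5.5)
Your proposal is correct and takes the same route as the paper's proof. You pass between a lift and a basis with grading via the images of the idempotents, use primitivity to get injectivity of $\partial_x$, and use $T$-homogeneity to get both conditions; conversely, you take $T_a$ to be the constant value of $\partial(M_a(b))-\partial(b)$. Your version is more careful than the paper's about degenerate cases, namely arrows with $M_a=0$ or that kill some basis vectors, and transporting the basis along $R(N)\cong M$; the paper passes over these without comment.
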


\begin{proof}
    An object $N \in \Mod \limmauro {Q}$ with $R(N) \cong M$ is primitive if and only if $\partial_x$ is injective for $\partial$ the grading on $M$ induced by $N$ as above. Given $b_k \in B_i$ with $\partial_i(b_k) = k$ and $a\colon i \rightarrow j,$ the condition of $T$-homogeneity ensures that $M_a(b_k)$ is a scalar multiple of another basis element and $\partial(M_a(b_k)) - \partial(b_k) = T_a.$ Since this does not depend on $k$ we have that \[\partial(M_a(b_k)) - \partial(b_k) = \partial(M_a(b_l)) - \partial(b_l)\] and rearranging this gives the second condition.

    Conversely, given a grading satisfying the above condition, the injectivity of $\partial_x$ ensures the lift is primitive as already observed. Likewise, the second condition ensures that $\partial(M_a(b_k)) - \partial(b_k)$ is a constant for all $k.$ Denote this constant by $T_a$ and we use this to construct $T \in \mathbb{Z}^{Q_1}$ and $N \in \Mod \tlimmauro {Q} {T}$ primitive with $R(N) \cong M.$
\end{proof}

\begin{Remark}
    For a representation $M \in \crep$ and grading $\partial$ satisfying the above conditions, the associated module $N \in \Mod \limmauro{Q}$ satisfies $|\gr_\textbf{d}(N)| = \chi(\gr_\textbf{d}(M)).$ This is equivalent to the main theorem of \cite{CI}. 
\end{Remark}

The approach used by Cerulli-Irelli was generalised by ideas of Haupt \cite{HAU} as follows. Consider the case where $N \in \Mod \tlimmauro {Q} {T}$ and $L \in \Mod \limmauro {Q}$ with $R(L) = R(N)$ and the $\cstar$-action given by $t_{L}$ does not act on $\gr_{\textbf{d}}(R(N))$ but does act on the subvariety $\gr_{\textbf{d}}(N).$ This implies $\gr_{\textbf{d}}(L) \cap \gr_{\textbf{d}}(N) \subseteq \gr_{\textbf{d}}(R(N))$ is a smaller subvariety with the same Euler characteristic. We will now convert this idea into our representation-theoretic framework.

Much of the combinatorics of Euler characteristics has now been phrased in terms of coefficient quivers (for example \cite{JS}). These were introduced by Ringel \cite{R2} to capture properties of indecomposable and exceptional modules. 

\begin{Definition}[\cite{R2}]
    Consider $M \in \Mod \mathbb{C}Q$ and a fixed basis $B_x$ at each vertex $x \in Q_0$. The set of vertices of the coefficient quiver $\Gamma(B,M)$ is given by $\cup_{x \in Q_0} B_x.$ Given $b \in B_x$ and $b' \in  \mathcal{B}_y$ we add an arrow between the corresponding vertices if there is an arrow $\alpha\colon x \rightarrow y$ such that when we express $M_{\alpha}(b)$ in terms of the basis $B_y$ then its $b'$ coefficient is non-zero.     
\end{Definition}

\begin{Example}
    Consider the module $M,$ 
    \[
    \begin{tikzcd}[ampersand replacement = \&]
	\&\& {\mathbb{C}} \\
	{\mathbb{C}} \& {\mathbb{C}^{2}} \\
	\&\& {\mathbb{C}}
	\arrow["\begin{array}{c} \begin{psmallmatrix} 1 \\ 1 \end{psmallmatrix} \end{array}", from=2-1, to=2-2]
	\arrow["{\begin{psmallmatrix} 1 & 0 \end{psmallmatrix}}", from=2-2, to=1-3]
	\arrow["{\begin{psmallmatrix} 0 & 1 \end{psmallmatrix}}", from=2-2, to=3-3]
    \end{tikzcd}
    \]
    and let $B$ denote the standard basis at each vertex with respect to which the maps are written. Let $B'$ be the same basis except at the central vertex where we use the basis $b_1 + b_2$ and $b_1 - b_2.$ We have the two following coefficient quivers.

    $\Gamma(B,M):$
    \[
    \begin{tikzcd}
	& \bullet & \bullet \\
	\bullet & \bullet & \bullet
	\arrow[from=1-2, to=1-3]
	\arrow[from=2-1, to=1-2]
	\arrow[from=2-1, to=2-2]
	\arrow[from=2-2, to=2-3]
    \end{tikzcd}
    \]

    $\Gamma(B',M):$
    \[
    \begin{tikzcd}
	& \bullet & \bullet \\
	\bullet & \bullet & \bullet.
	\arrow[from=1-2, to=1-3]
	\arrow[from=1-2, to=2-3]
	\arrow[from=2-1, to=2-2]
	\arrow[from=2-2, to=1-3]
	\arrow[from=2-2, to=2-3]
    \end{tikzcd}
    \]
\end{Example}

\begin{Proposition}[Property 1, \cite{R2}]
    M is indecomposable if and only if $\Gamma(B,M)$ is connected for every choice of basis $B.$ 
\end{Proposition}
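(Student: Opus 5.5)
Write $\Gamma = \Gamma(B,M)$. I will prove both directions by contraposition, using that a direct sum decomposition of $M$ corresponds to a splitting of the vertex spaces compatible with every arrow map $M_\alpha$.

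First suppose $M$ is decomposable, say $M \cong M' \oplus M''$ with both summands non-zero. At each vertex $x$ choose a basis $B'_x$ of $M'_x$ and a basis $B''_x$ of $M''_x$, and set $B_x = B'_x \cup B''_x$. For any arrow $\alpha\colon x\to y$ we have $M_\alpha(M'_x)\subseteq M'_y$ and $M_\alpha(M''_x)\subseteq M''_y$.
\begin{itemize}
\item Expressing $M_\alpha(b)$ for $b\in B'_x$ in the basis $B_y$ therefore involves only elements of $B'_y$.
\item Symmetrically, $M_\alpha(b)$ for $b\in B''_x$ involves only elements of $B''_y$.
\end{itemize}
So no arrow of $\Gamma$ joins $\bigcup_x B'_x$ to $\bigcup_x B''_x$. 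Both of these sets are non-empty, so $\Gamma$ is disconnected for this choice of $B$.

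Conversely, suppose $\Gamma$ is disconnected for some basis $B$. Take a connected component $C$ and let $D$ be its (non-empty) complement in the vertex set. Define
\[
M'_x = \spn_{\mathbb{C}}(B_x \cap C), \qquad M''_x = \spn_{\mathbb{C}}(B_x \cap D),
\]
so that $M_x = M'_x \oplus M''_x$ at every vertex. Let $\alpha\colon x\to y$ be an arrow and $b\in B_x\cap C$.
\begin{itemize}
\item Every $b'\in B_y$ with non-zero coefficient in $M_\alpha(b)$ is joined to $b$ by an arrow of $\Gamma$, so $b'\in C$.
\item Hence $M_\alpha(M'_x)\subseteq M'_y$, and in the same way $M_\alpha(M''_x)\subseteq M''_y$.
\end{itemize}
Thus $M'$ and $M''$ are subrepresentations with $M = M'\oplus M''$. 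Both are non-zero because $C$ and $D$ are non-empty, so $M$ is decomposable.

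There is no real obstacle here. The one point needing care is that connectivity is meant in the underlying undirected graph: arrows of $\Gamma$ run in the direction of $\alpha$, but the splitting argument only uses that no arrow crosses between $C$ and $D$ in either direction.
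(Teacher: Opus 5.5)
Your argument is correct and complete. The paper gives no proof of this statement; it is simply quoted from Ringel \cite{R2}, so there is no route to compare against. What you wrote is the standard direct argument, and it is fully self-contained. In one direction, a decomposition $M=M'\oplus M''$ into subrepresentations gives an adapted basis whose coefficient quiver has no arrows between the two parts. In the other direction, the basis vectors in a connected component span a subrepresentation, and the remaining basis vectors span a complementary one.

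The adapted choice of basis in your first half is exactly where the quantifier ``for every choice of basis'' is needed. A decomposable module can have connected coefficient quivers for other bases. For example, take the representation of $\bullet\to\bullet$ with $M_\alpha=\mathrm{Id}_{\mathbb{C}^2}$, the standard basis at the source and $\{e_1+e_2,e_1-e_2\}$ at the target. Its coefficient quiver is connected even though the module is decomposable.

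The only loose end is the zero module. The contrapositive of the `if' direction starts from `$M$ is not indecomposable', which includes $M=0$, where $\Gamma(B,M)$ is empty and your decomposition $M'\oplus M''$ into non-zero summands does not exist. So the statement tacitly assumes $M\neq 0$, or the convention that the empty graph is not connected; you should say so explicitly.
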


We say a module $M \in \crep$ is rigid if $\ext_{Q}^{1}(M,M) = 0.$
\begin{Theorem}[Theorem 1, \cite{R2}]
    If $M$ is indecomposable and rigid there is a basis $B$ such that $\Gamma(B,M)$ is a tree quiver. 
\end{Theorem}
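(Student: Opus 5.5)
We follow Ringel's strategy \cite{R2}. Throughout we assume, as implicit in the setting, that $Q$ has no oriented cycles, so $\mathbb{C}Q$ is finite dimensional and hereditary. The first step reduces rigidity to exceptionality. By the Happel--Ringel lemma, if $M$ is indecomposable and $\ext^1_Q(M,M)=0$, then every non-zero endomorphism of $M$ is an isomorphism. Since $\mathbb{C}$ is algebraically closed, this gives $\mathrm{End}_Q(M)=\mathbb{C}$, so $M$ is exceptional. We then argue by induction on $\dim M$. The base case is $M$ simple: a single basis vector gives a one-vertex coefficient quiver, which is trivially a tree.

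For the induction step we use Schofield induction. If $M$ is exceptional and not simple, there is an orthogonal exceptional pair $(X,Y)$ of exceptional modules, with
\[\Hom(X,Y)=\Hom(Y,X)=\ext^1(Y,X)=0,\]
each of strictly smaller dimension than $M$, such that $M$ lies in the extension-closed subcategory $\mathcal{C}(X,Y)$ they generate. This subcategory is equivalent to $\mathrm{Rep}(K(m),\mathbb{C})$, where $K(m)$ is the generalised Kronecker quiver with $m=\dim\ext^1(X,Y)$ arrows. Under this equivalence $M$ corresponds to an exceptional representation $V$ of $K(m)$. Exceptional representations of $K(m)$ are preprojective or preinjective and have real root dimension vectors. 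By the inductive hypothesis, $X$ and $Y$ admit bases $B^X$, $B^Y$ with $\Gamma(B^X,X)$ and $\Gamma(B^Y,Y)$ trees.

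Next we treat the Kronecker case directly. For exceptional $V\in\mathrm{Rep}(K(m),\mathbb{C})$ we exhibit bases whose coefficient quiver is a tree in which every arrow is labelled by a single Kronecker arrow. We do this by induction along reflection functors, or equivalently along the chain of universal extensions between consecutive preprojectives, checking at each step that exactly $\dim V_1+\dim V_2-1$ matrix entries are non-zero. This is an explicit computation on a rank-two quiver.

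The final step glues the pieces, and we expect it to be the main obstacle. Each vertex of $\Gamma(V)$ is replaced by a copy of $\Gamma(B^X,X)$ or $\Gamma(B^Y,Y)$, and each arrow of $\Gamma(V)$ by a single extension arrow. For this to work we need a basis $\eta_1,\dots,\eta_m$ of $\ext^1(X,Y)$ such that each $\eta_r$ is represented by a cocycle $(f_a)_{a\in Q_1}$, with $f_a\colon X_{s(a)}\to Y_{t(a)}$, having exactly one non-zero matrix entry with respect to $B^X,B^Y$. We would first try to obtain this from the standard resolution
\[0\to \bigoplus_{a\in Q_1}\Hom(X_{s(a)},Y_{t(a)})\leftarrow \bigoplus_{x}\Hom(X_x,Y_x)\leftarrow \Hom_Q(X,Y)\leftarrow 0.\]
Using the tree structure of $\Gamma(B^X)$ and $\Gamma(B^Y)$, we would show that the elementary matrices $E_{b',b}$ span $\ext^1$ modulo coboundaries, and then select $m$ of them forming a basis; this is the combinatorial heart of Ringel's argument. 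Granting this, the resulting basis of $M$ has coefficient quiver equal to a union of trees joined by single arrows according to the tree $\Gamma(V)$, so it is a tree. Alternatively, it suffices to show the glued quiver is connected, which follows from indecomposability of $M$ by \cite[Property 1]{R2}, and then count that its number of arrows is $\sum_x\dim M_x-1$.
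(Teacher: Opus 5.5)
The paper gives no proof of this statement; it only quotes \cite{R2}. Your outline is a sensible skeleton: Happel--Ringel to get $\mathrm{End}_Q(M)=\mathbb{C}$, then Schofield induction, then gluing along a tree basis of the Kronecker module. The genuine gap is the step you dispose of in a sentence, namely that every exceptional $K(m)$-module is a tree module.

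\emph{It cannot be absorbed into your induction.} The inequality $\dim V<\dim M$ fails exactly when $X$ and $Y$ are both simple, that is, when $M$ is itself a Kronecker module on two vertices joined by $m\geq 2$ arrows. Applied to a module over $K(m)$ itself, Schofield induction can only return $(X,Y)=(S_1,S_2)$, so $V=M$.

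\emph{The ``induction along reflection functors'' does not work as stated.} The reflected module is built from the kernel of $V_1^{m}\to V_2$; equivalently, $P_{i+2}$ is the cokernel of the universal map $P_i\to P_{i+1}^{m}$. A tree basis of $V$ singles out no basis of that kernel or cokernel. ``Checking that exactly $\dim V_1+\dim V_2-1$ entries are non-zero'' presupposes the very basis you have to construct. For $m\geq 3$ the dimension vectors grow exponentially, so this is an infinite family needing a genuine inductive construction of bases, not a routine computation on a rank-two quiver.

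One way to supply the missing idea is covering theory.
\begin{itemize}
\item The universal cover $\tilde K(m)\to K(m)$ is a winding from a tree.
\item Preprojective and preinjective $K(m)$-modules are push-downs $\pi_\lambda\tilde V$ of representations of finite subtrees, since $\pi_\lambda$ commutes with $\tau^{\pm 1}$.
\item $\tilde V$ is exceptional, because $\ext^1(\pi_\lambda\tilde V,\pi_\lambda\tilde V)\cong\bigoplus_g\ext^1(\tilde V,{}^g\tilde V)$.
\item Pushing forward along a winding (the functor $\rho_{F,v}$ of the paper) sends a tree basis to a basis with the same coefficient quiver.
\item On a tree quiver, Schofield induction always gives $\dim V'<\dim\tilde V$, unless $X',Y'$ are simples at adjacent vertices. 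In that case $m'=1$ and $\tilde V$ is the obvious thin $A_2$-module.
\end{itemize}
So a single strong induction on $\dim M$ over all finite acyclic quivers closes the argument, provided the case ``$X,Y$ simple'' is handled by lifting to $\tilde K(m)$.

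By contrast, the gluing step you call the main obstacle is routine and uses nothing about trees. The elementary cochains $E^a_{b',b}$ span all of $\bigoplus_a\Hom(X_{s(a)},Y_{t(a)})$, so their classes span $\ext^1_Q(X,Y)$, and some $m$ of them form a basis. Fix this basis before choosing $V$: the equivalence $\mathcal{F}(X,Y)\simeq\mathrm{Rep}(K(m))$ depends on it, and a $\mathrm{GL}_m$ change of basis does not preserve tree bases of $V$. Distinct $\eta_r$ then occupy distinct matrix positions, so nothing cancels. With $p=\dim V_1$ and $q=\dim V_2$, your count $p(\dim X-1)+q(\dim Y-1)+(p+q-1)=\dim M-1$, together with connectedness, gives a tree.

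Finally, the sequence you display should read $0\to\Hom_Q(X,Y)\to\bigoplus_x\Hom(X_x,Y_x)\to\bigoplus_a\Hom(X_{s(a)},Y_{t(a)})\to\ext^1_Q(X,Y)\to 0$.
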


\begin{Remark}
    The converse to this is not true. Consider the indecomposable module
    \[
    \begin{tikzcd}
	{\mathbb{C}} & {\mathbb{C}.}
	\arrow["0"', shift right=2, from=1-1, to=1-2]
	\arrow["1", shift left=2, from=1-1, to=1-2]
    \end{tikzcd}
    \]
    This module is not rigid but the only possible coefficient quiver is a tree.    
\end{Remark}

\begin{Definition}
    Given $N \in \Mod \limmauro {Q}$ we construct a quiver denoted by $\Gamma(N).$ The vertices are given by the set $\{n_i^x: n_i^x \neq 0\}.$ For each arrow $a: x \rightarrow y$ of $Q$ we add an arrow between the vertices corresponding to idempotents $n_{i}^{x}$ and $n_{j}^{y}$ if $n_{j}^{y} N_{a} n_{i}^{x} \neq 0.$
\end{Definition}

\begin{Example}
    \begin{itemize}
        \item For $L \in \Mod \mathbb{C}Q,$ we have that $\Gamma(i_{0}(L))$ is obtained from $Q$ by deleting an arrow $a$ if $L_{a} = 0$ and deleting a vertex $x$ if $L_x = 0.$
        \item Let $\widehat{M} \in \Mod \limmauro {Q}$ be the module induced from $M \in \Mod \mathbb{C}Q$ by a grading $\partial: \bigcup_x B_x \rightarrow \mathbb{Z}$ as before. It follows that $\Gamma(\widehat{M}) = \Gamma(B,M).$
    \end{itemize}
\end{Example}

\begin{Definition} \label{little gamma}
    We construct a representation $\gamma(M) \in \Rep(\Gamma(M),\mathbb{C})$ for $M \in \Mod \limmauro{Q}.$ At a vertex associated to the idempotent $m_i^x$ we assign the vector space $\mathrm{Im}(m_i^x)$ and for an edge in $\Gamma(M)$ defined by $m_j^yM_a m_i^x \neq 0$ we assign the map from $\mathrm{Im}(m_i^x)$ to $\mathrm{Im}(m_j^y)$ induced by $m_j^y M_a m_i^x.$    
\end{Definition}

\begin{Example}
    For $Q$ the quiver \[\begin{tikzcd}
	\bullet & \bullet & \bullet
	\arrow[from=1-2, to=1-1] \arrow[from=1-2, to=1-3]
    \end{tikzcd}\] let $M \in \Mod \limmauro{Q}$ be given by 
    
    \[\begin{tikzcd}[ampersand replacement = \&]    
    {\mathbb{C}^2} \& {\mathbb{C}^2} \& {\mathbb{C}^2}    
    \arrow[loop above, from= 1-1, to=1-1, "{\begin{psmallmatrix} 1 & 0 \\ 0 & 0\end{psmallmatrix}}"]
    \arrow[loop below, from= 1-1, to=1-1, "{\begin{psmallmatrix} 0 & 0 \\ 0 & 1\end{psmallmatrix}}"]    
    \arrow[from=1-2, to=1-1, "{\begin{psmallmatrix} 1 & 0 \\ 1 & 0\end{psmallmatrix}}"]      
    \arrow[loop above, from=1-2, to=1-2, "{\begin{psmallmatrix} 1 & 0 \\ 0 & 1\end{psmallmatrix}}"] 
    \arrow[loop below, from=1-2, to=1-2, "{\begin{psmallmatrix} 0 & 0 \\ 0 & 0\end{psmallmatrix}}"]
    \arrow[from=1-2,to=1-3,"{\begin{psmallmatrix} 1 & 0 \\ 0 & 1\end{psmallmatrix}}"]
    \arrow[loop above, from= 1-3, to=1-3, "{\begin{psmallmatrix} 1 & 0 \\ 0 & 0\end{psmallmatrix}}"]
    \arrow[loop below, from= 1-3, to=1-3, "{\begin{psmallmatrix} 0 & 0 \\ 0 & 1\end{psmallmatrix}}"] 
    \end{tikzcd}\]

    We only write the non-zero loops and do not worry about which loop they have been assigned to as it does not affect $\Gamma$ or $\gamma.$ We have $\Gamma(M)$ given by \[\begin{tikzcd}[ampersand replacement=\&]
	\bullet \&\&\&\& \bullet \\
	\&\& \bullet \\
	\bullet \&\&\&\& \bullet
	\arrow[from=2-3, to=1-1]
	\arrow[from=2-3, to=1-5]
	\arrow[from=2-3, to=3-1]
	\arrow[from=2-3, to=3-5]
    \end{tikzcd}\] and $\gamma(M)$ as 
    \[\begin{tikzcd}[ampersand replacement=\&]
	{\mathbb{C}} \&\&\&\& {\mathbb{C}} \\
	\&\& {\mathbb{C}^2} \\
	{\mathbb{C}} \&\&\&\& {\mathbb{C}}
	\arrow["{\begin{psmallmatrix} 1 & 0 \end{psmallmatrix}}"', from=2-3, to=1-1]
	\arrow["{\begin{psmallmatrix} 1 & 0 \end{psmallmatrix}}", from=2-3, to=1-5]
	\arrow["{\begin{psmallmatrix} 1 & 0 \end{psmallmatrix}}", from=2-3, to=3-1]
	\arrow["{\begin{psmallmatrix} 0 & 1 \end{psmallmatrix}}"', from=2-3, to=3-5]
\end{tikzcd}\]
\end{Example}

\begin{Definition}
    Given $N \in \Mod \limmauro Q$ and $\gamma(N) \in \Rep(\Gamma(N),\mathbb{C})$ we denote by \[\delta_{N} : \Gamma(N)_0 \rightarrow Q_0\] the map which projects back to the original vertices. That is to say, for the vertex, $y,$ of $\Gamma(N)$ defined by a non-zero idempotent $n_i^x$ then $\delta_N(y) = x.$ This induces a map of dimension vectors \[\underline{\delta}_{N} : \mathbb{N}^{\Gamma(N)_0} \rightarrow \mathbb{N}^{Q_0}\] by $\underline{\delta}_N(\textbf{d})_x = \displaystyle\sum_{\substack{y \in \Gamma(N)_0 \\ \delta_N(y) = x}} \textbf{d}_y.$
\end{Definition}

\begin{Lemma} \label{splitting up}
    Given $N \in \Mod \limmauro Q$ we have \[\gr_{\textbf{d}}(N) \cong \displaystyle\bigsqcup_{\underline{\delta}_{N} (\textbf{e}) = \textbf{d}} \gr_{\textbf{e}}(\gamma(N)).\]
\end{Lemma}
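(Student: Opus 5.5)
The plan is to identify a point of $\gr_{\textbf{d}}(N)$, namely a submodule $U \subseteq N$ in $\Mod \limmauro Q$ of dimension vector $\textbf{d}$, with a tuple of subspaces indexed by the vertices of $\Gamma(N)$. The key observation is Lemma \ref{embed morph} applied to the inclusion $U \hookrightarrow N$. Since the inclusion intertwines the idempotents, a subspace $U_x \subseteq N_x$ underlies a $\limmauro Q$-submodule exactly when $n_i^x(U_x) \subseteq U_x$ for all $i$. Equivalently, $U_x = \bigoplus_i (U_x \cap \mathrm{Im}(n_i^x))$, because $\sum_i n_i^x = \mathrm{Id}$ and the $n_i^x$ are orthogonal. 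So I will first record the bijection $U_x \leftrightarrow (U_x^i)_i$ with $U_x^i = n_i^x U_x \subseteq \mathrm{Im}(n_i^x)$, together with $\dim U_x = \sum_i \dim U_x^i$. Only indices $i$ with $n_i^x \neq 0$ contribute, and these are exactly the vertices of $\Gamma(N)$ over $x$.

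Next I check that the subrepresentation condition matches. For $a\colon x \to y$, write $N_a = \sum_{i,j} n_j^y N_a n_i^x$. Given graded subspaces as above, $N_a(U_x) \subseteq U_y$ holds if and only if $n_j^y N_a n_i^x (U_x^i) \subseteq U_y^j$ for all $i,j$. The forward direction follows by applying $n_j^y$, which preserves $U_y$. The converse follows by summing over $j$, since $U_y = \bigoplus_j U_y^j$. The nonzero components $n_j^y N_a n_i^x$ are precisely the arrows of $\Gamma(N)$, and by Definition \ref{little gamma} they are the maps of $\gamma(N)$. Hence $(U_x^i)$ is a subrepresentation of $\gamma(N)$ with some dimension vector $\textbf{e}$ satisfying $\underline{\delta}_N(\textbf{e}) = \textbf{d}$. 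Conversely, any such subrepresentation assembles to a $\limmauro Q$-submodule of dimension $\textbf{d}$. The loops of $U$ are the restrictions of the $n_i^x$, which are orthogonal idempotents summing to the identity.

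Finally I promote this bijection to an isomorphism of varieties. The map $\prod_{y \in \Gamma(N)_0} \gr(e_y, \mathrm{Im}(n_i^x)) \to \prod_x \gr(d_x, N_x)$, given by taking direct sums, is a closed embedding; this is the standard direct-sum embedding of Grassmannians already used in the one-vertex example. The images for distinct $\textbf{e}$ are disjoint, since $\textbf{e}$ is recovered from $U$ via $e_{(x,i)} = \dim n_i^x U_x$. Each image is also a union of connected components. Indeed, $\dim n_i^x U_x$ is upper semicontinuous, and the constraint $\sum_i \dim n_i^x U_x = d_x$ on the locus $\gr_{\textbf{d}}(N)$ forces these dimensions to be locally constant. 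Restricting to subrepresentations on both sides gives the disjoint union decomposition.

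The main obstacle is this last scheme-theoretic point: showing that the decomposition is an isomorphism of varieties and not merely a bijection of points, in particular that the pieces are open and closed. If this proves delicate, it suffices for the paper's purposes, namely Euler characteristics, to present it as a decomposition into locally closed pieces, each isomorphic to $\gr_{\textbf{e}}(\gamma(N))$.
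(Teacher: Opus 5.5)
Your proposal is correct and follows essentially the same route as the paper. The paper likewise splits $N_x\cong\bigoplus_i \mathrm{Im}(n_i^x)$, identifies the idempotent-stable locus of $\prod_x\gr(d_x,N_x)$ (which it phrases as the $\cstar$-fixed locus) with $\bigsqcup_{\underline{\delta}_N(\textbf{e})=\textbf{d}}\prod_y \gr(e_y,\gamma(N)_y)$, and checks the arrow condition componentwise via $N_a=\sum_{i,j} n_j^{t(a)}N_a n_i^{s(a)}$. Your extra care about the variety structure goes beyond the paper, but the semicontinuity step is unnecessary: the pieces are finitely many pairwise disjoint closed subsets covering $\gr_{\textbf{d}}(N)$, hence automatically open. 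Also, $\dim n_i^xU_x$ is a rank and so is lower, not upper, semicontinuous, though your argument works with either direction.
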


\begin{proof}
    Recall that a non-zero idempotent $n_i^x$ corresponds to a vertex $y \in \Gamma(N)_0$ and $\gamma(N)_y = \mathrm{Im}(n_i^x).$ Define the vector space isomorphism $p_x: N_x \rightarrow \displaystyle\bigoplus_{\delta_N(y) = x}\gamma(N)_y$ by $p_x(u) = (n_i^x(u))_{i \in J_x}$ where $J_x \subset \mathbb{Z}$ denotes the indices such that $i \in J_x$ if $n_i^x \neq 0.$ Combining these maps at each vertex we get an isomorphism \[p: \displaystyle\prod_{x \in Q_0} \gr(d_x, N_x) \rightarrow \displaystyle\prod_{x \in Q_0} \gr(d_x, \displaystyle\bigoplus_{\delta_N(y) = x} \gamma(N)_y).\] 

    Combining Lemma \ref{Hom dec} and the proof of Proposition \ref{Grass fix} \[\gr_{\textbf{d}}(N) = \gr_\textbf{d}(R(N)) \cap \displaystyle\prod_{x \in Q_0} \gr(\textbf{d}_x, N_x)^{\cstar}.\] By Lemma \ref{embed morph} and Lemma \ref{Hom dec} \[p(\displaystyle\prod_{x \in Q_0} \gr(\textbf{d}_x, N_x)^{\cstar}) = \displaystyle\bigsqcup_{\underline{\delta}_{N} (\textbf{e}) = \textbf{d}} \displaystyle\prod_{y \in \Gamma(N)_0} \gr(\textbf{e}_y, \gamma(N)_y).\]

    The point $(U_x)_{x \in Q_0} \in \displaystyle\prod_{x \in Q_0} \gr(d_x, N_x)$ is in $\gr_\textbf{d}(R(N))$ if and only if $N_a(U_{s(a)}) \subseteq U_{t(a)}$ for each $a \in Q_1.$ 
    Note $N_a = \displaystyle\sum_{i,j \in \mathbb{Z}} n_j^{t(a)} N_a n_i^{s(a)}.$ Therefore points in $\gr_\textbf{d}(R(N))$ are characterised by \[\displaystyle\sum_{i,j \in \mathbb{Z}} n_j^{t(a)} N_a n_i^{s(a)}(U_{s(a)}) \subseteq U_{t(a)}.\] This in turn is equivalent to \[\displaystyle\sum_{i,j \in \mathbb{Z}} n_j^{t(a)} N_a n_i^{s(a)}(n_i^{s(a)}U_{s(a)}) \subseteq \sum_{j \in \mathbb{Z}} n_j^{t(a)} U_{t(a)}.\] But since these idempotents are orthogonal this is equivalent to checking \[n_j^{t(a)} N_a n_i^{s(a)}(n_i^{s(a)}U_{s(a)}) \subseteq n_j^{t(a)} U_{t(a)}\] for all $i, j \in \mathbb{Z}$ such that $n_j^{t(a)}$ and $n_i^{s(a)}$ are non-zero. We see $p(\gr_\textbf{d}(N))$ is the subvariety of points in  \[\displaystyle\bigsqcup_{\underline{\delta}_{N} (\textbf{e}) = \textbf{d}} \displaystyle\prod_{y \in \Gamma(N)_0} \gr(\textbf{e}_y, \gamma(N)_y)\] satisfying $n_j^{t(a)}N_a n_i^{s(a)} (U_y) \subseteq U_z$ where $y, z \in \Gamma(N)_0$ correspond to the non-zero idempotents $n_i^{s(a)}$ and $n_j^{t(a)}$ respectively. This is precisely the condition for being a submodule of $\gamma(N)$ so we have the required isomorphism.
\end{proof}

\section{Equivariant sequences and Euler characteristics}
We present a theorem which is the combinatorial/representation theoretic version of previous work of Cerulli-Irelli \cite{CI} and Haupt \cite{HAU}. 

We have constructed a functor $R$ in Definition \ref{global R} and a map $\gamma$ in Definition \ref{little gamma} such that given $M \in \Mod \tlimmauro{Q}{T}$ we have

\begin{tikzcd}[ampersand replacement=\&]
	\&\& {M \in \Mod \tlimmauro{Q}{T}} \&\& \\
	\\
	{\gamma(M) \in \Mod \Gamma(M)} \&\&\&\& {R(M) \in \Mod \mathbb{C}Q.}
	\arrow["\gamma"', from=1-3, to=3-1]
	\arrow["R", from=1-3, to=3-5]
\end{tikzcd}

\vspace{0.5cm}

Furthermore, by Lemma \ref{splitting up} and Proposition \ref{Grass fix}, we have \[\chi(\gr_\textbf{d}(M)) = \gr_\textbf{d}(R(M)) = \chi(\displaystyle\bigsqcup_{\underline{\delta}_{M} (\textbf{e}) = \textbf{d}} \gr_{\textbf{e}}(\gamma(M))) = \displaystyle\sum_{\underline{\delta}_{M} (\textbf{e}) =  \textbf{d}} \chi(\gr_{\textbf{e}}(\gamma(M)))).\] Typically $\gr_{\textbf{e}}(\gamma(M)))$ is a lower dimensional variety and we can try to apply this approach again to form a new diagram

\begin{tikzcd}[ampersand replacement=\&]
	\&\& {M^1 \in \Mod \tlimmauro{\Gamma(M)}{T}} \&\& \\
	\\
	{\gamma(M^1) \in \Mod \Gamma(M^1)} \&\&\&\& {R(M^1) = \gamma(M) \in \Mod \Gamma(M).}
	\arrow["\gamma"', from=1-3, to=3-1]
	\arrow["R", from=1-3, to=3-5]
\end{tikzcd}

\vspace{0.5cm}

In general, this will reduce the dimensions of the quiver Grassmannians again. This approach is summarised in the proposition below.

\begin{Remark}
  In this section we will need to index sequences of modules over certain $\mauro{j}{Q}.$ Due to the abundance of indices in this paper we index them by the slightly unconventional $M^0, \dots, M^i.$ We apologise to the reader for this notation but see no better choice.  
\end{Remark}

\begin{Proposition}\label{haupt}
    Consider $M \in \crep$ and $r \in \mathbb{N}.$ Let $M^0 = i_0(M) \in \Mod \limmauro Q$ and choose $M^1 \in \Mod \tlimmauro {\Gamma(M^0)} {T_1}$ for some $T_1 \in \mathbb{Z}^{\Gamma(M^0)_1}$ with $R(M^1) \cong \gamma(M^0).$ Repeat this process for $1 \leq j \leq r$ by finding $M^j \in \Mod \tlimmauro {\Gamma(M^{j-1})} {T_j}$ for some $T_j \in \mathbb{Z}^{\Gamma(M^{j-1})_1}$ with $R(M^j) \cong \gamma(M^{j-1}).$ Consider $\delta^{r} \defeq \delta_r \cdots \delta_1,$ and the map $\underline{\delta}^{r}: \mathbb{N}^{\Gamma(M^{r-1})_0} \rightarrow \mathbb{N}^{Q_0}.$
    $$\chi(\gr_{\textbf{d}}(M)) = \displaystyle\sum_{\underline{\delta}^{r}(\textbf{e}) = \textbf{d}} \chi(\gr_{\textbf{e}}(M^r))$$ for all $\textbf{d} \in \mathbb{N}^{Q_0}.$
\end{Proposition}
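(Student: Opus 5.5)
We argue by induction on $r$, using at each stage two ingredients: Proposition \ref{Euler chars}, which preserves Euler characteristics when passing from $R(N)$ to $N$ for $N$ $T$-homogeneous, and Lemma \ref{splitting up}, which splits $\gr_\textbf{d}(N)$ into a disjoint union of quiver Grassmannians of $\gamma(N)$. For the base case, take $M^0 = i_0(M)$. Since $i_0$ is an equivalence (Corollary \ref{split embedding}) and $M^0$ lies in $\Mod \tlimmauro{Q}{T}$ with $T=0$ (all of $M^0_x$ sits on loop $0$), we get $\gr_\textbf{d}(M^0) = \gr_\textbf{d}(M)$. The graph $\Gamma(M^0)$ is $Q$ with zero arrows and vertices deleted, and $\gamma(M^0)$ is $M$ restricted there. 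So $\delta_0$ is the inclusion of vertices, and $\gr_\textbf{d}(M)=\gr_{\textbf{d}}(\gamma(M^0))$, with $\textbf{d}$ supported on $\Gamma(M^0)_0$ (dimension vectors not of this form give empty Grassmannians on both sides).

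For the inductive step, suppose we already know
\[\chi(\gr_\textbf{d}(M)) = \sum_{\underline{\delta}^{j-1}(\textbf{e})=\textbf{d}} \chi(\gr_\textbf{e}(\gamma(M^{j-1}))).\]
Since $R(M^j)\cong\gamma(M^{j-1})$, each term equals $\chi(\gr_\textbf{e}(R(M^j)))$. Because $M^j\in\Mod\tlimmauro{\Gamma(M^{j-1})}{T_j}$, Proposition \ref{Euler chars} gives $\chi(\gr_\textbf{e}(R(M^j)))=\chi(\gr_\textbf{e}(M^j))$. This yields the formula at level $j$ with $M^j$ in place of $\gamma(M^{j-1})$. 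To continue, apply Lemma \ref{splitting up}, which writes $\gr_\textbf{e}(M^j)$ as a disjoint union of the $\gr_{\textbf{f}}(\gamma(M^j))$ over $\underline{\delta}_{M^j}(\textbf{f})=\textbf{e}$. Additivity of compactly supported Euler characteristic over finite disjoint unions of varieties then gives
\[\chi(\gr_\textbf{d}(M))=\sum_{\underline{\delta}^{j-1}\underline{\delta}_{M^j}(\textbf{f})=\textbf{d}}\chi(\gr_\textbf{f}(\gamma(M^j))).\]
Composing pushforwards of dimension vectors, $\underline{\delta}^{j-1}\circ\underline{\delta}_{M^j}=\underline{\delta^{j}}$, which is exactly the hypothesis needed for the next round. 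Stopping at $j=r$, before this final splitting step, gives the stated formula in terms of $\gr_\textbf{e}(M^r)$.

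The main obstacle is bookkeeping rather than geometry. First, the isomorphism $R(M^j)\cong\gamma(M^{j-1})$ must be transported to quiver Grassmannians; this is harmless, since isomorphic representations have isomorphic Grassmannians. Second, the indices of $\delta^r$ and of the domain of $\underline{\delta}^r$ must be matched correctly: the sum in the statement runs over dimension vectors on the vertices of the quiver on which $M^r$ lives, namely $\Gamma(M^{r-1})$, and the pushforward must be the composite over all intermediate levels. Third, the sums must be finite, which holds because only finitely many $\textbf{e}$ with nonnegative entries have a given image, the maps $\underline{\delta}$ being sums over finite fibres. Finally, we need the identity $\underline{\delta}_{M}\circ\underline{\delta}_{N} = \underline{(\delta_M\circ\delta_N)}$, which is immediate from the definition as sums over fibres.
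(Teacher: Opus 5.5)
Your proof is correct and is the same argument as the paper's, which just says the result follows by repeatedly applying Lemma~\ref{splitting up} and Proposition~\ref{Grass fix}; your use of Proposition~\ref{Euler chars} is that proposition combined with the fixed-point property of $\chi$, and your induction spells out the iteration. The only blemish is an off-by-one in your superscripts: your $\underline{\delta}^{j-1}$, with domain $\mathbb{N}^{\Gamma(M^{j-1})_0}$, is what the statement calls $\underline{\delta}^{j}$, which is harmless once relabelled.
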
 

\begin{proof}
    This follows from repeatedly applying Lemma \ref{splitting up} and Proposition \ref{Grass fix}.
\end{proof}

We refer to a sequence $(M^0,\dotsc,M^r)$ as above as a sequence of equivariant lifts.

\begin{Definition} \label{limit}
    Given $M \in \crep$ and a sequence of equivariant lifts $(M^0,\dots,M^r)$ as above, the idempotents of $M^r$ induce a module $X \in \mathrm{Idem}(Q)$ as was considered in the previous section. This module is unique up to permuting the loops of $\mathrm{Idem}(Q).$ We call any such $X \in \mathrm{Idem}(Q)$ a \textit{limit} of $(M^0,\cdots,M^r).$
\end{Definition}

See Example \ref{lim ex} for an example of this process. 

\begin{Theorem} \label{limit euler}
    Let $X$ be a limit of a sequence of equivariant lifts $(M^0,\dots,M^r)$ with $M = R(M^0).$
    \[\gr_{\textbf{d}}(X) \cong \displaystyle\bigsqcup_{\underline{\delta}_M(\textbf{e}) = \textbf{d}} \gr_{\textbf{e}}(M^r).\] In particular \[\chi(\gr_{\textbf{d}}(X)) = \chi(\gr_{\textbf{d}}(M)).\]
\end{Theorem}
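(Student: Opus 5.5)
Throughout, $\delta^r = \delta_r \cdots \delta_1$ denotes the composite projection $\Gamma(M^r)_0 \to Q_0$, and I read $\underline{\delta}_M$ in the statement as $\underline{\delta}^r$. The plan is to identify $\gamma(X)$ with $\gamma(M^r)$ and then use Lemma \ref{splitting up} on both sides.

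\emph{Step 1: describe $X$ concretely.} By Definition \ref{limit}, $R(X) = M$. At each vertex $x \in Q_0$, the non-zero idempotents of $X$ are indexed by the vertices $z \in \Gamma(M^r)_0$ with $\delta^r(z) = x$. The image of the idempotent attached to $z$ is the subspace of $M_x$ obtained by pulling $\mathrm{Im}(m^{r}_z)$ back through the successive identifications $N_x \cong \bigoplus_{\delta_N(y)=x}\gamma(N)_y$ used in Lemma \ref{splitting up}. Here $N$ runs over $M^0,\dots,M^{r-1}$ in turn, and we use the isomorphisms $R(M^j)\cong\gamma(M^{j-1})$.

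\emph{Step 2: identify $\gamma(X)$ with $\gamma(M^r)$.} I will show that $\Gamma(X)\cong\Gamma(M^r)$ and $\gamma(X)\cong\gamma(M^r)$, compatibly with the projections to $Q_0$, i.e.\ $\delta_X = \delta^r$. I expect this to be the main obstacle. The argument is an induction on $r$. For an arrow $a\colon x\to y$ of $Q$ and vertices $z,z'$ lying over $x,y$, the component $x_{z'} M_a x_z$ is obtained by compressing $M_a$ in stages. At stage $j$ the arrows of $\Gamma(M^{j-1})$ over $a$ are the non-zero blocks $m_{j'}^{y}M_a m_i^{x}$ of $\gamma(M^{j-1})$, and these are refined further by the idempotents of $M^j$. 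Since all the idempotents involved are orthogonal and refine one another, the final block equals the composite block, so $x_{z'}M_a x_z\neq 0$ exactly when there is an arrow $z\to z'$ in $\Gamma(M^r)$ lying over $a$. One point needs care: $\Gamma(M^j)$ may contain several arrows between the same pair of vertices, each lying over a different arrow of $Q$. I will handle this by labelling the arrows of every $\Gamma(M^j)$ by the arrow of $Q$ beneath them.

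\emph{Step 3: conclude.} Apply Lemma \ref{splitting up} to $X$ and use Step 2 to get
$\gr_{\textbf{d}}(X)\cong\bigsqcup_{\underline{\delta}^r(\textbf{e})=\textbf{d}}\gr_{\textbf{e}}(\gamma(M^r))$.
Applying Lemma \ref{splitting up} once more, this time to $M^r$ over $\Gamma(M^{r-1})$, should reconcile this decomposition with the one in the statement by regrouping the dimension vectors $\textbf{e}$ along $\delta_r$.

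For the Euler characteristic, take $\chi$ of both sides. By additivity over the finite disjoint union,
\[
\chi(\gr_{\textbf{d}}(X)) = \sum_{\underline{\delta}^r(\textbf{e})=\textbf{d}}\chi(\gr_{\textbf{e}}(M^r)),
\]
and Proposition \ref{haupt} identifies the right-hand side with $\chi(\gr_{\textbf{d}}(M))$. If the indexing of the decomposition in the statement differs slightly from mine, I will reconcile it by summing over the intermediate dimension vectors with Proposition \ref{Grass fix} and Proposition \ref{Euler chars}. These two results give equality of Euler characteristics at each level because each $M^j$ is equivariant.
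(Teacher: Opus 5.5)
Your proposal is correct and follows the paper's own route. The paper's one-line proof (combine Lemma \ref{splitting up} with Proposition \ref{haupt}) silently uses the identification $\gamma(X)\cong\gamma(M^r)$, compatible with the projections to $Q_0$, that you make explicit in Step 2. That identification, the regrouping via Lemma \ref{splitting up} applied to $M^r$, and Proposition \ref{haupt} for the Euler characteristic together give the result.

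One small notational slip: the map in the statement, and in your final display involving $\gr_{\textbf{e}}(M^r)$, should be the composite $\Gamma(M^{r-1})_0\to Q_0$ of Proposition \ref{haupt}. It is not your $\delta^r$ with domain $\Gamma(M^r)_0$. Your regrouping step in Step 3 already corrects for this.
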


\begin{proof}
    This follows from combining \ref{splitting up} and \ref{haupt}.
\end{proof}

We will now show that sequences of equivariant lifts are the representation theoretic version of Haupt's nice sequences \cite{HAU}.
\begin{Definition}[Definition 4.2, \cite{HAU}] \label{haupt def}
    Given a sequence $(\partial_0,\dots,\partial_{i-1})$ of gradings, we say $\partial_i$ is a nice $(\partial_0,\dots,\partial_{i-1})$-grading on $M \in \crep$ if there exists a function \[\Delta : \mathbb{Z}^i \times \mathbb{Z}^i \times Q_1 \rightarrow \mathbb{Z}\] such that for any arrow $a \in Q_1$ and pair $e_t, e_j \in B$ such that $M_a(e_t) = \sum_k m_k e_k$ with $m_j \neq 0,$ \[\Delta((\partial_l(e_t))_{0 \leq l \leq i-1}, (\partial_l(e_j))_{0 \leq l \leq i-1}, a) = \partial_i(e_t) - \partial_i(e_j).\]
\end{Definition}

In a sequence of equivariant lifts $(M^0,\dots,M^r),$ with $M = R(M^0).$ We can choose a basis of $M^r$ with each basis vector in the image of an idempotent of $M^r.$ This basis will induce a basis of $M.$ This basis allows us to associate to $M^i$ a grading $\partial_i$ for M.

\begin{Proposition} \label{limit to haupt}
    Let $(M^0, \dots, M^r)$ be a sequence of modules with $R(M^i) = \gamma(M^{i-1})$ for each $i$ and $M^0 = i_0(M).$ The sequence $(M^0,\dots,M^r)$ is a sequence of equivariant lifts if and only if $\partial_i$ is a nice $(\partial_0,\dots,\partial_{i-1})$-grading for all $0 \leq i \leq r.$
\end{Proposition}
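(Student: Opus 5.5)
We argue by induction on $i$, comparing the $T_i$-homogeneity condition on $M^i$ with the nice-grading condition of Definition \ref{haupt def} one step at a time. First we fix the bookkeeping. Choose a basis $B$ of $M$ as in the paragraph preceding the proposition: each basis vector lies in the image of a single idempotent of $M^r$, and hence, by construction of the iterated $\gamma$'s, in the image of a single idempotent of each $M^i$. Then $\partial_i(b)$ is the loop index of the idempotent of $M^i$ containing $b$. The key observation is that a vertex $y \in \Gamma(M^{i-1})_0$ is exactly the datum of a vertex $x\in Q_0$ together with the tuple $(\partial_0(b),\dots,\partial_{i-1}(b))$ shared by all $b$ lying over $y$. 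Indeed, iterating $\delta$ recovers $x$, and successive idempotent indices recover the grading values. Similarly, arrows of $\Gamma(M^{i-1})$ are indexed by an arrow $a\in Q_1$ together with the source and target tuples.

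For the forward direction, suppose $M^i \in \Mod \tlimmauro{\Gamma(M^{i-1})}{T_i}$. Take $a\in Q_1$ and basis vectors $e_t,e_j$ with the $e_j$-coefficient of $M_a(e_t)$ nonzero. Then $e_t$ and $e_j$ lie over vertices $y,z$ of $\Gamma(M^{i-1})$. The component $m_z M_a m_y$ is nonzero, since applying it to $e_t$ gives a nonzero $e_j$-coefficient; here we use that $B$ is adapted to all idempotents. So there is an arrow $\alpha\colon y\to z$ in $\Gamma(M^{i-1})$, and $\gamma(M^{i-1})_\alpha$ is the induced map. By the Remark after Definition \ref{Maurolico algebra}, $T_i$-homogeneity forces the image of the $\partial_i(e_t)$-weight space under $\gamma(M^{i-1})_\alpha$ to lie in the $(\partial_i(e_t)+(T_i)_\alpha)$-weight space. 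This gives $\partial_i(e_j)-\partial_i(e_t)=(T_i)_\alpha$. We then define $\Delta(\underline{\partial}(e_t),\underline{\partial}(e_j),a)=-(T_i)_\alpha$, where $\alpha$ is the arrow determined by the triple, and set $\Delta$ arbitrary on triples not arising this way. This is well defined by the key observation.

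For the converse, given $\Delta$, set $(T_i)_\alpha=-\Delta(\text{tuple}(y),\text{tuple}(z),a)$ for each arrow $\alpha\colon y\to z$ of $\Gamma(M^{i-1})$ lying over $a$. Then $T_i$-homogeneity of $M^i$ must be checked: each nonzero matrix coefficient of $\gamma(M^{i-1})_\alpha$ from a weight-$k$ basis vector lands in weight $k+(T_i)_\alpha$. Since the weight idempotents of $M^i$ are diagonal in $B$, this is exactly the statement that $\ell_{k'}\,\alpha\,\ell_k=0$ for $k'-k\neq (T_i)_\alpha$.

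The base case $i=0$ is trivial: $M^0=i_0(M)$ has the single loop $0$ and $T=0$, matching the constant grading $\partial_0=0$. For this we need $\Delta$ depending on the empty tuple, or on $\partial_0$ itself, so we take $\Delta=0$.

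The main obstacle is the claim that the chosen basis is simultaneously adapted to the idempotents of every $M^i$, and that the matrix coefficients of $\gamma(M^{i-1})_\alpha$ in this basis coincide with those of $M_a$ restricted to the appropriate blocks. This must be verified by unwinding Definition \ref{little gamma} inductively, using Lemma \ref{embed morph} to write $M_a$ as the sum of its blocks $m_z M_a m_y$. A secondary care point is that the paper's sign convention for $\Delta$ ($\partial_i(e_t)-\partial_i(e_j)$) must be tracked consistently against the homogeneity shift.
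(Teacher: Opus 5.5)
Your proposal is correct and follows the paper's argument. You identify the triple $(a,(\partial_l(e_t))_{l<i},(\partial_l(e_j))_{l<i})$ with an arrow $\alpha$ of $\Gamma(M^{i-1})$, so that $T_i$-homogeneity of $M^i$ and the existence of Haupt's $\Delta$ (with $\Delta=-(T_i)_\alpha$) become the same condition, and you make explicit the bookkeeping (a basis adapted to the idempotents at every level, and arrows of $\Gamma(M^{i-1})$ indexed by an arrow of $Q$ together with source and target tuples) that the paper leaves implicit when it says the relevant basis vectors lie in the image of a unique idempotent of $M^{i-1}$.
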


\begin{proof}
    First suppose $(M^0,\dots,M^r)$ is a sequence of equivariant lifts for $M = R(M^0).$ We can define $\Delta$ by setting \[\Delta((\partial_l(e_t))_{0 \leq l \leq i-1}, (\partial_l(e_j))_{0 \leq l \leq i-1}, a) = \partial_i(e_t) - \partial_i(e_j)\] for any $a,e_i,e_j$ as in Definition \ref{haupt def}. It just remains to check that if we have another triple $a,e_k,e_v$ satisfying the conditions of Definition \ref{haupt def} such that \[(\partial_l(e_t))_{0 \leq l \leq i-1}, (\partial_l(e_j))_{0 \leq l \leq i-1} = (\partial_l(e_k))_{0 \leq l \leq i-1}, (\partial_l(e_v))_{0 \leq l \leq i-1}\] then $\partial_i(e_t) - \partial_i(e_j) =  \partial_i(e_k) - \partial_i(e_v).$ If \[(\partial_l(e_t))_{0 \leq l \leq i-1}, (\partial_l(e_j))_{0 \leq l \leq i-1} = (\partial_l(e_k))_{0 \leq l \leq i-1}, (\partial_l(e_v))_{0 \leq l \leq i-1}\] then $e_t$ and $e_k$ are both in the image of a unique idempotent in $M^{i-1}$ and $e_j$ and $e_v$ are also both in the image of a unique idempotent in $M^{i-1}.$ It follows from the equivariance of $M^i$ over $\gamma(M^{i-1})$ that $\partial_i(e_t) - \partial_i(e_j) =  \partial_i(e_k) - \partial_i(e_v).$

    Conversely $(\partial_0,\cdots,\partial_r)$ such that $\partial_i$ is a nice $(\partial_0,\cdots,\partial_{i-1})$-grading for all $0 \leq i \leq r.$ Suppose $e_t$ and $e_k$ are both in the image of a unique idempotent in $M^{i-1}$ and $e_j$ and $e_v$ also both in the image of a unique idempotent in $M^{i-1}.$ Since the nice $(\partial_0,\cdots,\partial_{i-1})$-grading condition implies that $\partial_i(e_t) - \partial_i(e_j) =  \partial_i(e_k) - \partial_i(e_v)$ then for $T_a = \partial_i(e_t) - \partial_i(e_j)$ and extending this to $T \in \mathbb{Z}^{Q_1}$ this means $M^i$ is an equivariant lift of $\gamma(M^{i-1}).$
\end{proof}

This tells us that at the level of the corresponding gradings this is then Haupt's theory of \textit{nice sequences}. When our final module $M^r$ is a primitive equivariant lift of $\gamma(M^{r-1})$ this implies the corresponding nice sequence \textit{distinguishes bases} \cite{HAU}.

\section{Morphisms of quivers}
We have seen that representation theory in $\Mod \limmauro Q$ can capture much of the combinatorics that has previously been considered in terms of coefficient quivers and gradings. We will fully describe the relation between the two. 

A morphism of quivers $F\colon S \rightarrow Q$ is the data of $F_0\colon S_0 \rightarrow Q_0$ and $F_1\colon S_1 \rightarrow Q_1$ such that for $a \in S_1,$ we have $F_1(a)\colon F_0(s(a)) \rightarrow F_0(t(a)).$

\begin{Definition}
    Let $F:S\rightarrow Q$ be a morphism of quivers along with a map $v: S_0 \rightarrow \mathbb{Z}$ which is injective on the fibres $\{y \in S_0: F_0(y)=x\}.$ We define \[\rho_{F,v}: \Rep(S,\mathbb{C}) \rightarrow \Mod \limmauro Q\] by \[\rho_{F,v}(M)_x = \displaystyle\bigoplus_{\substack{ y \in S_0 \\ F_0(y) = x}} M_y,\] \[\rho_{F,v}(M)_a = \displaystyle\bigoplus_{\substack{b \in S_1 \\ F_1(b) = a}} M_b\] and the idempotents are those projecting onto the components of the direct sum $\displaystyle\bigoplus_{F(y) = x} M_y$ and their weights are determined by $v$. The injection \[\displaystyle\bigoplus_{y \in S_0} \Hom_{\mathbb{C}} (M_y, N_y) \hookrightarrow \displaystyle\bigoplus_{x \in Q_0} \Hom_{\mathbb{C}}(\displaystyle\bigoplus_{F(y) = x} M_y, \displaystyle\bigoplus_{F(y) = x} N_y)\] induces an injection \[\rho_{F,v}: \Hom_S(M, N) \hookrightarrow \Hom_{\limmauro{Q}}(\rho_{F,v}(M), \rho_{F,v}(N)).\] For a morphism $f \in \Hom_S(M,N)$ this defines the functor as $\rho_{F,v}(f) \in \Hom_{\limmauro{Q}}(\rho_{F,v}(M), \rho_{F,v}(N)).$ 
\end{Definition}

\begin{Lemma}
    The functor $\rho_{F,v}:\Rep(S,\mathbb{C}) \rightarrow \Mod \limmauro Q$ is an embedding.
\end{Lemma}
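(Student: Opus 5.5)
We must show that $\rho_{F,v}$ is a fully faithful functor; in the paper's usage, "embedding" means fully faithful. The plan has two steps. First, check that $\rho_{F,v}$ is a well-defined functor that is faithful. Second, show that every morphism between images comes from a morphism of $S$-representations, using Lemma \ref{embed morph}.

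\emph{Well-definedness and faithfulness.} For $M \in \Rep(S,\mathbb{C})$ we check that $\rho_{F,v}(M)$ lies in $\Mod \limmauro Q$. At each vertex $x$, the loop $\ell^x_{v(y)}$ is sent to the projection onto the summand $M_y$. This is well defined because $v$ is injective on the fibre $F_0^{-1}(x)$, so distinct summands get distinct loops. There are only finitely many nonzero loops, they are orthogonal idempotents, and they sum to the identity.

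We read $\rho_{F,v}(M)_a$ as the block matrix whose $(F_0(t(b)), F_0(s(b)))$-block entry is $M_b$, for each $b \in F_1^{-1}(a)$. Equivalently, $\rho_{F,v}(M)_a = \sum_{F_1(b)=a} \iota_{t(b)} M_b \pi_{s(b)}$. For $f \in \Hom_S(M,N)$, the map $\rho_{F,v}(f)_x = \bigoplus_{F_0(y)=x} f_y$ is block diagonal, so it commutes with the projection idempotents. It also commutes with the arrows: $f_{t(b)}M_b = N_b f_{s(b)}$ for each $b$, and summing over $b$ gives the required identity. Functoriality is clear from the block-diagonal description. Faithfulness follows because $f \mapsto \bigoplus_y f_y$ is injective on $\bigoplus_y \Hom(M_y,N_y)$.

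\emph{Fullness.} This is the main step. Let $\phi \in \Hom_{\limmauro Q}(\rho_{F,v}(M),\rho_{F,v}(N))$. By Lemma \ref{embed morph}, $\phi_x = \sum_i n^x_i \phi_x m^x_i$. Here the idempotents $m^x_i$ and $n^x_i$ with $i = v(y)$ are exactly the projections onto $M_y$ and $N_y$, and they carry the same index on both sides. Hence $\phi_x$ is block diagonal, $\phi_x = \bigoplus_{F_0(y)=x} f_y$, with $f_y \colon M_y \to N_y$.

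It remains to show that $f = (f_y)_{y\in S_0}$ is a morphism of $S$-representations. Fix $b \colon y \to z$ in $S$ with $a = F_1(b)$. Take the relation $\phi_{t(a)}\rho(M)_a = \rho(N)_a\phi_{s(a)}$ and compose it on the left with the projection to $N_z$ and on the right with the inclusion of $M_y$. This gives $f_z(\text{block}_{z,y}\rho(M)_a) = (\text{block}_{z,y}\rho(N)_a) f_y$.

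The obstacle is that the $(z,y)$-block of $\rho(M)_a$ is $\sum M_{b'}$ over all $b'$ with $F_1(b')=a$, $s(b')=y$, $t(b')=z$. If $S$ has several parallel arrows $y\to z$ mapping to the same $a$, the block relation only yields $f_z(\sum M_{b'}) = (\sum N_{b'}) f_y$, which is weaker than commuting with each $M_{b'}$ separately. I would handle this by noting that the construction requires, or should be read as requiring, that $F$ be injective on arrows between any fixed pair of vertices (as holds for coefficient quivers). If no such hypothesis is intended, the claim would be faithfulness plus fullness onto the subcategory where it holds. Under that injectivity, the block is exactly $M_b$, and $f$ is a morphism with $\rho_{F,v}(f) = \phi$, which proves fullness.
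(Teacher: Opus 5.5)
This is the paper's own argument, written out correctly: its proof consists solely of the citation of Lemma~\ref{embed morph}, and you use that lemma the same way. Intertwining with the idempotents, which carry the same index $v(y)$ on both sides, forces each $\phi_x$ to be block diagonal, and the diagonal blocks then assemble into a morphism of $S$-representations.

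Your caveat is a genuine correction to the statement, not an artefact of your method. Take $S$ to be the Kronecker quiver with arrows $b_1,b_2\colon y\to z$, both sent to the single arrow $a\colon x\to w$ of $Q$. The representation $M$ with $M_y=M_z=\mathbb{C}$, $M_{b_1}=1$, $M_{b_2}=-1$ has $\rho_{F,v}(M)_a=0$, so $\mathrm{End}_S(M)\cong\mathbb{C}$ while $\mathrm{End}(\rho_{F,v}(M))\cong\mathbb{C}^2$, and $\rho_{F,v}$ is not full. The lemma therefore needs exactly your hypothesis that $F_1$ be injective on the arrows between each fixed pair of vertices. That hypothesis does hold for windings (Lemma~\ref{orbit quiver}) and for $\delta_N\colon\Gamma(N)\to Q$, which are the cases the paper actually uses.
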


\begin{proof}
    This follows from Lemma \ref{embed morph}.
\end{proof}

The following diagram commutes:
\[\begin{tikzcd}
{\Rep(S,\mathbb{C})} &&&& {\crep} \\
\\
&& {\Mod \hat{W}(Q)}
\arrow["{F_*}", from=1-1, to=1-5]
\arrow["{\rho_{F,v}}"', from=1-1, to=3-3]
\arrow["R"', from=3-3, to=1-5]
\end{tikzcd}\]

An object $N \in \Mod \limmauro Q$ gives rise to $\gamma(N)$ and a map $v: \Gamma(N)_0 \rightarrow \mathbb{Z}$ which sends a vertex to the weights of the corresponding non-zero idempotent. Recall that an arrow $a \in \Gamma(N)_1$ is defined by a non-zero map $n^{t(b)}_j N_b n^{s(b)}_i$ for some $b \in Q_1.$ By sending each $a \in \Gamma(N)_1$ to the corresponding arrow $b \in Q_1$ the map $\delta_N$ extends to a morphism of quivers \[\delta_N : \Gamma(N) \rightarrow Q.\] By the definition of $\gamma$ we see \[\rho_{\delta_N,v}(\gamma(N)) = N.\] 

\begin{Lemma} \label{morphism embedding}
    Given a pair $F,v$ as above then $\rho_{F,v}(\Rep(S,\mathbb{C}))$ gives rise to a full subcategory of $\Mod \limmauro{Q}$ and any object in $\Mod \limmauro{Q}$ lies in the essential image of at least one $\rho_{F,v}.$
\end{Lemma}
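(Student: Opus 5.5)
The statement has two parts, and I will treat them separately.

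\emph{Fullness.} Fix $M,N\in\Rep(S,\mathbb{C})$ and $\psi\in\Hom_{\limmauro{Q}}(\rho_{F,v}(M),\rho_{F,v}(N))$. At each $x\in Q_0$ the nonzero idempotents of $\rho_{F,v}(M)$ and $\rho_{F,v}(N)$ are the projections onto the summands $M_y$ and $N_y$ with $F_0(y)=x$. Since $v$ is injective on fibres, these summands are indexed by distinct weights $v(y)$. Lemma \ref{embed morph} then gives $\psi_x=\sum_{y}n^x_{v(y)}\psi_x m^x_{v(y)}$, so $\psi_x$ is block diagonal. Write $f_y\colon M_y\to N_y$ for its diagonal blocks.

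It remains to check that $(f_y)_{y\in S_0}$ is a morphism of $S$-representations. For an arrow $a\colon x\to x'$ of $Q$, compose the relation $\psi_{x'}\rho(M)_a=\rho(N)_a\psi_x$ on the right with the inclusion of $M_y$ and on the left with the projection onto $N_{y'}$. Because $\rho(M)_a=\bigoplus_{F_1(b)=a}M_b$, the $(y',y)$ block of $\rho(M)_a$ is the sum of $M_b$ over the arrows $b\colon y\to y'$ with $F_1(b)=a$.

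This is the one place where care is needed. If several arrows $b\colon y\to y'$ in $S$ map to the same $a$, the block identity only gives
\[f_{y'}\sum_b M_b=\sum_b N_b f_y,\]
not the identity for each $b$ separately. I expect the intended reading of the definition, consistent with $\Gamma(N)$ having at most one arrow per pair of idempotents and each $a\in Q_1$, is that $F$ is injective on arrows between fixed fibre vertices. In that case each block is exactly one $M_b$, and the identity $f_{y'}M_b=N_bf_y$ follows for every $b$. Under this reading $\rho_{F,v}(f)=\psi$, which proves fullness. Faithfulness is already recorded in the preceding lemma.

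\emph{Essential surjectivity over all pairs.} Given $N\in\Mod\limmauro{Q}$, take $F=\delta_N\colon\Gamma(N)\to Q$ and let $v$ send each vertex to the weight of its idempotent. This $v$ is injective on the fibres of $\delta_N$ by construction. The identity $\rho_{\delta_N,v}(\gamma(N))=N$, noted just before the lemma, then places $N$ in the image of $\rho_{\delta_N,v}$. If needed, I would verify that identity directly: the idempotents and spaces $\mathrm{Im}(n_i^x)$ reassemble to $N_x$, and each $N_a$ equals the sum of its blocks $n^{y}_jN_an^x_i$, which are precisely the arrows of $\Gamma(N)$.
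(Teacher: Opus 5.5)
Your proposal is correct and follows the paper's own argument. Fullness is the block-diagonal decomposition from Lemma \ref{embed morph}, which uses injectivity of $v$ on fibres; this is the paper's one-line proof that $\rho_{F,v}$ is an embedding. The covering statement is the observation $\rho_{\delta_N,v}(\gamma(N))=N$ made just before the lemma.

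Your caveat about parallel arrows is not a matter of reading. It is a genuine correction that the paper passes over. Take $S$ to be the Kronecker quiver $b_1,b_2\colon y\to y'$, mapped onto $a\colon x\to x'$. Let $M$ and $N$ have $\mathbb{C}$ at both vertices, with $(M_{b_1},M_{b_2})=(1,0)$ and $(N_{b_1},N_{b_2})=(0,1)$. Then $\rho_{F,v}(M)$ and $\rho_{F,v}(N)$ coincide, yet $\Hom_S(M,N)=0$, so fullness fails without a condition like yours. Your condition holds for $\delta_N$ and for every winding, so the covering part and the later applications are unaffected.
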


We see that morphisms of quivers $F\colon S\rightarrow Q$ gives rise to full subcategories of $\Mod \limmauro{Q}$ and $\Mod \limmauro{Q}$ is covered by the subcategories arising from morphisms of quivers. In particular, $\Mod \limmauro{Q}$ allows us to simultaneously study $\operatorname{Rep}(S,\mathbb{C})$ for all morphisms $F\colon S \rightarrow Q$ and a fixed $Q.$ We interpret the next lemma as saying that conversely we can study $\Mod \limmauro{Q}$ by considering all morphisms of quivers to $Q.$

\begin{Lemma}
    For $M,N \in \Mod \limmauro{Q}$ there is a quiver $S,$ representations $X,Y \in \operatorname{Rep}(S,\mathbb{C})$ and a pair $F\colon S \rightarrow Q$ and $v \colon S_0 \rightarrow \mathbb{Z}$ as before such that $M = \rho_{F,v}(X),$ $N = \rho_{F,v}(Y)$ and \[\Hom_S(X,Y) = \Hom_{\limmauro{Q}}(M,N).\]
\end{Lemma}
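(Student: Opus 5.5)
The natural choice is to take $S = \Gamma(M) \sqcup \Gamma(N)$, or more economically a quiver built from the nonzero idempotents of both $M$ and $N$. The first attempt is the disjoint union. Then a morphism $X \to Y$ of $S$-representations supported on disjoint components would be zero, so this fails. Instead I would let $S$ have vertex set $\{(x,i) : x \in Q_0,\ i \in \mathbb{Z},\ m_i^x \neq 0 \text{ or } n_i^x \neq 0\}$. I then add an arrow $(x,i) \to (y,j)$ for each $a\colon x \to y$ in $Q_1$ such that $n_j^y N_a n_i^x \neq 0$ or $m_j^y M_a m_i^x \neq 0$. With $F(x,i) = x$ and $F_1$ sending such an arrow to $a$, the map $F$ is a morphism of quivers. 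The weight map $v(x,i) = i$ is injective on the fibres of $F_0$, as required.

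Next I define $X$ at vertex $(x,i)$ to be $\mathrm{Im}(m_i^x)$, which may be zero. On the arrow $(x,i)\to(y,j)$ lying over $a$, $X$ is the map induced by $m_j^y M_a m_i^x$, and $Y$ is defined the same way from $N$. This is just $\gamma(M)$ and $\gamma(N)$ extended by zero to the larger quiver. Because $\sum_{i,j} m_j^y M_a m_i^x = M_a$, and the idempotents of $\rho_{F,v}(X)$ at $x$ are the projections onto the summands $\mathrm{Im}(m_i^x)$ with weight $i$, we get $\rho_{F,v}(X) = M$ exactly as in the identity $\rho_{\delta_M,v}(\gamma(M)) = M$. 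Likewise $\rho_{F,v}(Y) = N$, up to the canonical identification $M_x = \bigoplus_i \mathrm{Im}(m_i^x)$. Here I must make sure that several arrows over the same $a$ with different $(i,j)$ are distinct arrows of $S$. This keeps the sum $\bigoplus_{F_1(b)=a} X_b$ in the definition of $\rho$ correct, with the summands placed in the appropriate blocks.

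It remains to prove $\Hom_S(X,Y) = \Hom_{\limmauro{Q}}(M,N)$. By the definition of $\rho_{F,v}$ on morphisms we already have an injection $\Hom_S(X,Y) \hookrightarrow \Hom_{\limmauro{Q}}(M,N)$, so I only need surjectivity. Take $\phi \in \Hom_{\limmauro{Q}}(M,N)$. By Lemma \ref{embed morph}, $\phi_x = \sum_i n_i^x \phi_x m_i^x$, so $\phi_x$ is block diagonal with blocks $\phi_{(x,i)}\colon \mathrm{Im}(m_i^x) \to \mathrm{Im}(n_i^x)$. This is exactly a family of linear maps indexed by $S_0$. I would then check that these blocks commute with every arrow of $S$. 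Multiplying $N_a\phi_x = \phi_y M_a$ on the left by $n_j^y$ and on the right by $m_i^x$, and using the intertwining relations $n^x_i\phi_x = \phi_x m^x_i$, gives
\[ (n_j^y N_a n_i^x)\,\phi_{(x,i)} = \phi_{(y,j)}\,(m_j^y M_a m_i^x). \]
This is the commutativity condition for the arrow $(x,i)\to(y,j)$. It also holds trivially for pairs with no arrow, since then both sides vanish.

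The main obstacle is the bookkeeping in the choice of $S$ and its arrows. The arrows must be indexed by the triples $(a,i,j)$ so that every nonzero block of $M_a$ and of $N_a$ appears, with no duplicates. The zero vector spaces must be handled so that $\rho_{F,v}(X)$ is literally $M$, rather than $M$ with spurious zero idempotents. Once $S$ is set up this way, the Hom equality reduces to the displayed block identity.
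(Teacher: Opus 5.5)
Your proof is correct and is essentially the paper's. Your quiver $S$ is exactly $\Gamma(M\oplus N)$, and you take $X$, $Y$ to be the zero-extensions of $\gamma(M)$, $\gamma(N)$, with $F$ the projection $\delta$ and $v$ the weight map. Where the paper just invokes that $\rho_{F,v}$ is fully faithful (via Lemma~\ref{embed morph}), you carry out that fullness check explicitly through the block identity, which is the same argument made concrete.
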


\begin{proof}
    Recall $\Mod \limmauro{Q}$ is the colimit of the categories $\Mod \mauro{j}{Q}$ with functors $\phi_j: \Mod \mauro{j}{Q} \hookrightarrow \Mod \limmauro{Q}.$ There is a $j \in \mathbb{N}$ such that $X = \phi_j(\tilde{X})$ and $Y = \phi_j(\tilde{Y})$ for $\tilde{X}, \tilde{Y} \in \Mod \mauro{j}{Q}.$ Let $X\oplus Y = \phi_j(\tilde{X}\oplus \tilde{Y}).$ Define $S = \Gamma (M \oplus N).$ Observe $\Gamma(M)$ and $\Gamma(N)$ are naturally subquivers of $S.$ This allows us to consider $\gamma(M)$ and $\gamma(N)$ as objects of $\operatorname{Rep}(S, \mathbb{C}).$ As before we have a morphism of quivers $F\colon S \rightarrow Q$ and $v\colon S_0 \rightarrow \mathbb{Z}$ such that $\rho_{F,v} \colon \operatorname{Rep}(S,\mathbb{C}) \rightarrow \Mod \limmauro{Q}$ is an embedding. The result then follows.
\end{proof}

The above lemma can be easily extended to see that given any finite collection of objects in $\Mod \limmauro{Q}$ all morphisms between them arise from morphisms between representations over a fixed quiver $S$ and a morphism of quivers $F\colon S \rightarrow Q.$ Given a representation $M \in \Rep(S,\mathbb{C})$ of dimension vector $\textbf{e}$ then the dimension vector of $\rho_{F,v}(M)$ is $(\sum_{F_0(y) = x} e_y)_{x \in Q_0}.$ Combined with the embedding \[\rho_{F,v}: \Rep(S,\mathbb{C}) \rightarrow \Mod \limmauro Q\] this then recovers Lemma \ref{splitting up} \[\gr_{\textbf{d}}(N) = \displaystyle\bigsqcup_{\underline{\delta}_{N}(\textbf{e}) = \textbf{d}} \gr_{\textbf{e}}(\gamma(N)).\] For a quiver $S$ denote by $\mathrm{Aut}(S)$ the group of morphisms of quivers which are bijective on $S_0$ and $S_1.$ Given a group $G \subseteq \mathrm{Aut}(S)$ then there is a notion of an orbit quiver $Q \defeq S/G$ as can be found in \cite{MaPe}. This comes with a covering $F: S \rightarrow Q.$ A morphism of quivers $F:S\rightarrow Q$ is called a winding \cite{WCB} if $F_1(a) = F_1(b)$ and $a \neq b$ implies that $s(a) \neq s(b)$ and $t(a) \neq t(b).$

\begin{Lemma}\label{orbit quiver}
    Fix a quiver $S,$ a group $G \subseteq \mathrm{Aut}(S)$ and a map $v: S_0 \rightarrow \mathbb{Z}.$ Suppose that the covering $F: S \rightarrow Q$ is a winding where $Q = S/G$ is the orbit quiver and the induced map \[\begin{aligned}
        w_v : \, & S_1 \rightarrow \mathbb{Z}, \\ & a \mapsto v(t(a)) - v(s(a))
    \end{aligned} \]
    is $G$-invariant. 
    The map $\overline{w}_v : Q_1 \rightarrow \mathbb{Z}$ defines $T \in \mathbb{Z}^{Q_1}$ by $T_a = \overline{w}_v(a)$ and we have \[\mathrm{Im}(\rho_{F,v}) \subseteq \Mod \tlimmauro {Q} {T}.\]
\end{Lemma}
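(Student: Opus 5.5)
The plan is to reduce the claim to the $T$-homogeneity condition of Definition \ref{Maurolico algebra}, in the form given by the Remark following it. A module $N\in\Mod\limmauro{Q}$ lies in $\Mod\tlimmauro{Q}{T}$ exactly when $n^{t(a)}_i N_a n^{s(a)}_k = 0$ for every $a\in Q_1$ and every pair with $i-k\neq T_a$. So we fix $M\in\Rep(S,\mathbb{C})$, set $N=\rho_{F,v}(M)$, and verify these vanishings directly.

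First we check that $T$ is well defined. For $a\in Q_1$ we need all arrows $b\in S_1$ with $F_1(b)=a$ to have the same value $w_v(b)$, so that $\overline{w}_v(a)$ makes sense.
\begin{itemize}
\item Since $Q=S/G$ is the orbit quiver and $F$ is the quotient map, the fibre $F_1^{-1}(a)$ is a single $G$-orbit of arrows.
\item Since $w_v$ is $G$-invariant, it is constant on that orbit.
\end{itemize}
This gives $\overline{w}_v$, and hence $T\in\mathbb{Z}^{Q_1}$ with $T_a=\overline{w}_v(a)$. In particular, every arrow $b$ over $a$ satisfies $v(t(b))-v(s(b))=T_a$.

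Next we compute $N_a$ blockwise. By definition, $N_x=\bigoplus_{F_0(y)=x}M_y$. The idempotent $n^x_i$ is the projection onto the summand $M_y$ with $v(y)=i$, or zero if there is no such $y$; this is well defined because $v$ is injective on fibres. Also $N_a=\bigoplus_{F_1(b)=a}M_b$, meaning that $N_a$ restricted to $M_{s(b)}$ has component $M_b$ into $M_{t(b)}$.

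Here the winding hypothesis is used. Distinct arrows over $a$ have distinct sources and distinct targets, so each summand $M_y$ of $N_{s(a)}$ is the source of at most one $b$ over $a$, and each target summand receives at most one such arrow. Thus $N_a$ is a genuine "partial permutation" block matrix with blocks $M_b$. Without the winding property, the expression $\bigoplus M_b$ would need to be read as a sum, but the block argument below works the same way.

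Now take $i,k$ with $n^{t(a)}_iN_an^{s(a)}_k\neq 0$. Let $y$ be the vertex over $s(a)$ with $v(y)=k$ and $z$ the vertex over $t(a)$ with $v(z)=i$. The only possible nonzero contribution from $M_y$ to $M_z$ is $M_b$ for an arrow $b\colon y\to z$ with $F_1(b)=a$. For such a $b$,
\[i-k=v(z)-v(y)=w_v(b)=T_a.\]
Hence $n^{t(a)}_iN_an^{s(a)}_k=0$ whenever $i-k\neq T_a$, which is the homogeneity condition, so $N\in\Mod\tlimmauro{Q}{T}$.

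Some technical points remain.
\begin{itemize}
\item Only finitely many loops act nonzero, since $S$ is finite and so $v$ takes finitely many values.
\item The definition of $\Mod\tlimmauro{Q}{T}$ as a colimit means homogeneity need only be checked in some $\mauro{j}{Q}$ with $j\ge\max|v|$. This holds by the Corollary following the colimit proposition together with Remark \ref{equiv remark}.
\end{itemize}

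The main obstacle is the first step: justifying that $F_1^{-1}(a)$ is exactly one $G$-orbit, which depends on the construction of the orbit quiver in \cite{MaPe}. If that fails, for example if the orbit quiver identifies more arrows, I would instead argue using the $G$-invariance of $w_v$ on whatever fibres the construction produces.
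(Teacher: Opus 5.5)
Your proof is correct and follows the same route as the paper's: the winding condition puts $N_a$ in block partial-permutation form with respect to the summands $M_y$, and $G$-invariance of $w_v$ (equivalently, constancy of $w_v$ on each fibre $F_1^{-1}(a)$) forces every nonzero block $n^{t(a)}_i N_a n^{s(a)}_k$ to have $i-k=T_a$, with you supplying more detail, including the well-definedness of $T$. One small correction: only finitely many idempotents are nonzero because $M$ is finite-dimensional, so only finitely many $M_y$ are nonzero; this does not come from $S$ being finite, and indeed the paper's main application of this lemma uses the infinite quiver $\tilde{Q}_n$.
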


\begin{proof}
    Given $V \in \Rep(S,\mathbb{C})$ then the winding condition ensures that each map in $F_{*}(V) \in \Rep(Q,\mathbb{C})$ sends each weight space to at most one other weight space. The $G$-invariance of the induced function then implies that the weight space it is mapped to is the $T$-homogeneous shift.
\end{proof}

\begin{Remark}
    Under the conditions of Lemma \ref{orbit quiver} we see that given $M \in \Rep(S,\mathbb{C})$ and $N \cong \rho_{F,v}(M)$ then \[\chi(\gr_\textbf{d}(R(N))) = \chi(\gr_\textbf{d}(N)) = \bigsqcup_{F(\textbf{e}) = \textbf{d}} \chi(\gr_\textbf{e}(M)) , \, \forall \textbf{d} \in \mathbb{N}^{Q_0}.\] This can be used to recover Theorem 1.2(c) of \cite{HAU}.
\end{Remark}

\section{Morphism spaces and Euler characteristics}
In this section we will develop a characterisation of nice sequences purely in terms of the category $\Mod \limmauro{Q}.$ Consider $Y \in \Mod \limmauro{Q}.$ For $\textbf{d} \in \mathbb{N}^{Q_0}$ we denote by $\Hom(\textbf{d},Y) \subseteq \bigoplus_{x \in Q_0} \Hom_{\mathbb{C}}(\mathbb{C}^{d_x}, Y_x)$ the space of morphisms $(f_x)_{x \in Q_0}$ such that there is an object $U \in \Mod \limmauro{Q}$ of dimension vector $\textbf{d}$ with $(f_x)_{x \in Q_0}: U \rightarrow Y$ a morphism in $\Mod \limmauro{Q}.$ Given $Y \in \Mod \limmauro{Q}$ of dimension vector $\textbf{m}$ and a subgroup $H \subseteq \mathrm{G}_\textbf{m},$ we will say that $\Hom(-,Y)$ is $H$-stable if $\Hom(\textbf{d},Y) \subseteq \bigoplus_{x \in Q_0} \Hom(\mathbb{C}^{d_x}, Y_x)$ is stable under the action of $H$ for all $\textbf{d} \in \mathbb{N}^{Q_0}.$ Given $Z \in \Mod \limmauro{Q}$ and $t_Z: \cstar \rightarrow \mathrm{G}_\textbf{m}$ the associated morphism we will also denote by $t_Z$ the one parameter subgroup in $\mathrm{G}_\textbf{m}.$ Recall from Proposition \ref{Grass fix} that $\Hom^{0}(\textbf{d},R(N))$ is the space of pairs $(U,f)$ where $U$ is a representation of $Q$ with dimension vector $\textbf{d}$ and $f$ an injective map $f\colon U \hookrightarrow R(N).$

\begin{Lemma}
    For $Y,Z \in \Mod\limmauro{Q}$ then $\gr_\textbf{d}(Y)$ is $t_Z$-stable for all $\textbf{d} \in \mathbb{N}^{Q_0}$ if and only if $\Hom(-,Y)$ is $t_Z$-stable.
\end{Lemma}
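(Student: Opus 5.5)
We would prove this by passing through the description of quiver Grassmannians as quotients of injective morphism spaces, exactly as in the proof of Proposition \ref{Grass fix}. First we would record the analogue of that identification inside $\Mod \limmauro{Q}$. Let $\Hom^0(\textbf{d},Y) \subseteq \Hom(\textbf{d},Y)$ be the open subset of tuples $(f_x)_{x\in Q_0}$ with every $f_x$ injective. Then $\gr_\textbf{d}(Y) \cong \Hom^0(\textbf{d},Y)/\mathrm{G}_\textbf{d}$, via $(f_x) \mapsto (\mathrm{Im} f_x)_x$. To check this we would use Lemma \ref{Hom dec}: a tuple $(f_x)$ lies in $\Hom(\textbf{d},Y)$ precisely when, for some choice of orthogonal idempotent decompositions $u_i^x$ and $Q$-maps on $\mathbb{C}^{\textbf{d}}$, it intertwines the idempotents and the arrows. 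For injective $f$ this is exactly the statement that the image is a subrepresentation of $R(Y)$ with $\mathrm{Im} f_x = \bigoplus_i (\mathrm{Im} f_x \cap \mathrm{Im}\, y_i^x)$, i.e.\ a point of $\gr_\textbf{d}(Y)$. The structure of $U$ is then uniquely determined by transport along $f$.

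Second, we would compare the two actions. The group $t_Z \subseteq \mathrm{G}_\textbf{m}$ acts on $\bigoplus_x \Hom(\mathbb{C}^{d_x},Y_x)$ by post-composition and on $\prod_x \gr(d_x,Y_x)$ by moving subspaces. The map $f \mapsto \mathrm{Im} f$ is equivariant and commutes with the right $\mathrm{G}_\textbf{d}$-action.

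For ``$\Leftarrow$'': if $\Hom(\textbf{d},Y)$ is $t_Z$-stable, then so is its open subset $\Hom^0(\textbf{d},Y)$, since post-composing with an invertible map preserves injectivity. Taking images then shows that $\gr_\textbf{d}(Y)$ is $t_Z$-stable.

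For ``$\Rightarrow$'': take $f \in \Hom(\textbf{d},Y)$ realised by $f\colon U \to Y$, which need not be injective. The key step is to reduce to the injective case. We would factor $f$ in $\Mod \limmauro{Q}$ as $U \twoheadrightarrow \mathrm{Im} f \hookrightarrow Y$, where the image is an object of $\Mod \limmauro{Q}$ because kernels and images of morphisms intertwining idempotents inherit idempotent decompositions, by Corollary \ref{injective split}. Put $\textbf{d}' = \underline{\dim}\, \mathrm{Im} f$ and choose an inclusion $\iota\colon \mathbb{C}^{\textbf{d}'} \to Y$ onto $\mathrm{Im} f$, so that $f = \iota \circ p$. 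By hypothesis $\gr_{\textbf{d}'}(Y)$ is $t_Z$-stable, so $t_Z(\lambda)\,\mathrm{Im} f$ is again a point $V$ of $\gr_{\textbf{d}'}(Y)$.

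We then need $t_Z(\lambda) f$ to be a morphism from some $U'$ to $Y$. Write $t_Z(\lambda) f = (t_Z(\lambda)\iota)\circ p$. The map $t_Z(\lambda)\iota$ is an isomorphism of vector spaces onto the subobject $V$. We would transport the structure of $V$ back along it to get a module $W$ on $\mathbb{C}^{\textbf{d}'}$, so that $t_Z(\lambda)\iota\colon W \to Y$ is a morphism. It then remains to give $\mathbb{C}^\textbf{d}$ a structure $U'$ making $p\colon U' \to W$ a morphism. Since $p$ is surjective at each vertex, we can do this by splitting: $\mathbb{C}^{d_x} = \ker p_x \oplus C_x$, with $C_x \cong W_x$ carrying the pulled-back idempotents and arrows (the arrows landing in $C$), and $\ker p_x$ carrying zero arrows and, say, the weight-$0$ idempotent. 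One checks that $p$ intertwines both the arrows and the idempotents. Hence $t_Z(\lambda) f \in \Hom(\textbf{d},Y)$.

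We expect this final lifting step to be the main obstacle. In particular, we must be careful that the definition of $\Hom(\textbf{d},Y)$ allows arbitrary $U$, so that this construction of $U'$ is legitimate, and that $\dim$ and weight bookkeeping impose no further constraint.
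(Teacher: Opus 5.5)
Your proposal is correct and follows the same route as the paper. Both identify $\gr_\textbf{d}(Y)$ with $\Hom^0(\textbf{d},Y)/\mathrm{G}_\textbf{d}$, obtain the ``$\Leftarrow$'' direction by restricting to injective maps, and obtain ``$\Rightarrow$'' by factoring an arbitrary morphism through its image. Your explicit lift (transport the structure of $t_Z(\lambda)\mathrm{Im} f$ back to $\mathbb{C}^{\textbf{d}'}$, then split off $\ker p$ with zero arrows and the weight-$0$ idempotent) checks out and fills in the step the paper compresses into ``by factoring morphisms via their image.'' One small point: the image is a submodule simply because $\Mod \mauro{j}{Q}$ is a module category, so Corollary~\ref{injective split} is not really what is needed there.
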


\begin{proof}
    If $\Hom(-,Y)$ is $t_Z$-stable, so is $\Hom^{0}(\textbf{d},Y)$ since $t_Z$ acts via invertible maps. Using the quotient $\Hom^{0}(\textbf{d},Y)/\mathrm{G}_\textbf{d} \cong \gr_\textbf{d}(Y)$ we see that this implies the quiver Grassmannian is stable under the $t_Z$-action.

    Suppose the quiver Grassmannians are stable under $t_Z$ it follows that $\Hom^{0}(\textbf{d},Y)$ is too since the actions of $t_Z$ and $\mathrm{G}_\textbf{d}$ commute. By factoring morphisms via their image this implies that $\Hom(-,Y)$ is stable under $t_Z.$
\end{proof}

\begin{Lemma} \label{acyclic stable}
    If Q is acyclic $\gr_\textbf{d}(Y)$ is $t_Z$-stable for all $\textbf{d} \in \mathbb{N}^{Q_0}$ if and only if for any $a\in Q_1$ and $U_{s(a)} \subseteq Y_{s(a)}$ we have \[(t_Z(\lambda)^{-1}Y_at_Z(\lambda))(U_{s(a)}) \subseteq Y_a(U_{s(a)}).\] 
\end{Lemma}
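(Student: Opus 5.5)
The plan is to reduce $t_Z$-stability of all quiver Grassmannians to a statement about single arrows, one arrow at a time. The basic computation is the following. A point $U=(U_x)_{x\in Q_0}$ of $\gr_\textbf{d}(Y)$ is sent by $\lambda\in\cstar$ to $t_Z(\lambda)U=(t_{Z,x}(\lambda)U_x)_x$. This point satisfies the arrow condition for $a\in Q_1$ exactly when $Y_a\,t_Z(\lambda)(U_{s(a)})\subseteq t_Z(\lambda)(U_{t(a)})$. Applying $t_Z(\lambda)^{-1}$, this becomes
\[\bigl(t_Z(\lambda)^{-1}Y_at_Z(\lambda)\bigr)(U_{s(a)})\subseteq U_{t(a)} .\]
So stability of every $\gr_\textbf{d}(Y)$ says: for every subrepresentation $U$ and every arrow $a$, the conjugated map $Y_a^\lambda\defeq t_Z(\lambda)^{-1}Y_at_Z(\lambda)$ sends $U_{s(a)}$ into $U_{t(a)}$.

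I also have to handle the requirement that the idempotents $y_i^x$ of $Y$ preserve $U$. For this I would use the previous lemma to pass to $\Hom(-,Y)$, and by Lemma \ref{Hom dec} treat the weight condition and the arrow condition separately. I expect the weight condition to be implicitly part of the hypothesis that $t_Z$ acts on the ambient product $\prod_x\gr(d_x,Y_x)^{\cstar}$. If it is not automatic, I will record it as the standing assumption under which the lemma is meant, namely that $t_Z$ commutes with the $y_i^x$.

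For the ``if'' direction, suppose $Y_a^\lambda(U_{s(a)})\subseteq Y_a(U_{s(a)})$ for all arrows and subspaces. If $U$ is a subrepresentation, then $Y_a(U_{s(a)})\subseteq U_{t(a)}$, so $Y_a^\lambda(U_{s(a)})\subseteq U_{t(a)}$ for every arrow. By the computation above, $t_Z(\lambda)U$ is again a subrepresentation with the same dimension vector, which is the required stability. Acyclicity plays no role in this direction.

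For the ``only if'' direction, fix an arrow $a\colon x\to y$ and a subspace $U_x\subseteq Y_x$. I would use acyclicity to build the smallest subrepresentation $V$ with $V_x\supseteq U_x$. Since there are no oriented cycles through $x$, it satisfies $V_x=U_x$, and $V_z=\sum_p Y_p(U_x)$ over paths $p$ from $x$ to $z$. Applying stability to $V$ gives $Y_a^\lambda(U_x)\subseteq V_y=Y_a(U_x)+\sum_{p\neq a}Y_p(U_x)$.

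The main obstacle is removing the contributions of the other paths $p\neq a$ from $x$ to $y$: parallel arrows and longer paths. My first attempt will exploit the $\lambda$-dependence. One has $Y_a^\lambda=\sum_{i,j}\lambda^{i-j}z_j^yY_az_i^x$, a Laurent polynomial in $\lambda$ whose $\lambda^0$ coefficient lies in $Y_a(U_x)$ once one checks compatibility with weight spaces. I would then test stability on carefully chosen subrepresentations, for instance $V$ with $U_x$ replaced by a single weight line, or quotients killing the other arrows out of $x$, to isolate each coefficient. If paths of length at least two cannot be separated in this way, I would argue by induction along a topological order of the acyclic quiver, treating arrows out of sources first. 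If even that fails, the correct statement probably needs the right-hand side $Y_a(U_{s(a)})$ enlarged to the subrepresentation generated by $U_{s(a)}$, and I would prove that version instead.
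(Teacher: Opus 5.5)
Your ``if'' direction is correct and is exactly the paper's argument. For ``only if'' you also follow the paper's route: take the subrepresentation $V$ generated by $U_x$, so that $V_x=U_x$ by acyclicity. You correctly observe that stability of $V$ only gives $Y_a^\lambda(U_x)\subseteq V_{t(a)}=\sum_p Y_p(U_x)$, where the sum runs over all paths $p$ from $x$ to $t(a)$. The paper's proof passes over this point and tacitly treats $V_{t(a)}$ as $Y_a(U_x)$.

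None of your proposed repairs ($\lambda$-coefficients, special test subrepresentations, induction along a topological order) can close this gap. The ``only if'' direction is false as stated as soon as some arrow $a$ is not the only path from $s(a)$ to $t(a)$. Take the Kronecker quiver with arrows $a,b\colon x\to y$ and let $Y_x=\mathbb{C}$, $Y_y=\mathbb{C}^2$, $Y_a(1)=e_1$, $Y_b(1)=e_2$. The subrepresentations of $Y$ are $(0,W)$ with $W\subseteq\mathbb{C}^2$ arbitrary, together with $(\mathbb{C},\mathbb{C}^2)$. So every $\gr_{\textbf{d}}(Y)$ is empty, a point, or all of $\gr(k,\mathbb{C}^2)$, hence stable under every one-parameter subgroup of $\mathrm{G}_{\textbf{m}}$. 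Let $Z$ have $R(Z)=Y$, weight $0$ on $Y_x$, and at $y$ weight $0$ on $\mathbb{C}(e_1+e_2)$ and weight $1$ on $\mathbb{C}(e_1-e_2)$. Then $(t_Z(\lambda)^{-1}Y_at_Z(\lambda))(Y_x)$ is spanned by $(1+\lambda^{-1})e_1+(1-\lambda^{-1})e_2$. This does not lie in $Y_a(Y_x)=\mathbb{C}e_1$ for $\lambda\neq 1$. This $Z$ is also not $T$-homogeneous for any $T$, so the same example breaks the implication from the second to the third condition in Proposition~\ref{grass and equi}, which relies on this direction of the lemma.

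Your fallback is the right statement. Replace $Y_a(U_{s(a)})$ by $\sum_p Y_p(U_{s(a)})$, the $t(a)$-component of the subrepresentation generated by $U_{s(a)}$. Then both directions follow at once from your conjugation computation, and acyclicity is not even needed. For ``only if'', apply stability to that generated subrepresentation. For ``if'', note that any subrepresentation $W$ with $W_{s(a)}\supseteq U_{s(a)}$ has $W_{t(a)}\supseteq\sum_p Y_p(U_{s(a)})$.

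Alternatively, the lemma holds as written under the extra hypothesis that each arrow $a$ is the unique path from $s(a)$ to $t(a)$. Then $V_{t(a)}=Y_a(U_{s(a)})$, and your (and the paper's) argument goes through verbatim.

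Finally, you can drop the discussion of the idempotents of $Y$. In the paper $Y=R(Z)$ is a plain $Q$-representation, so $\gr_{\textbf{d}}(Y)$ imposes no weight condition. If $Y$ did carry idempotents not commuting with $t_Z$, even the ``if'' direction would fail, for instance for one vertex with no arrows.
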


\begin{proof}
    To see that the second condition implies the first we observe that it is equivalent to saying that $(Y_a)(t_Z(\lambda)U_{s(a)}) \subseteq t_Z(\lambda)Y_a(U_{s(a)})$ which says that the $t_Z$-action preserves submodules. Conversely, suppose that $\gr_\textbf{d}(Y)$ is $t_Z$-stable for any $\textbf{d} \in \mathbb{N}^{Q_0}.$ Since $Q$ is acyclic then given any vertex $x \in Q_0,$ $U_x \subseteq Y_x$ and $a \in Q_1$ with $s(a) = x$ there is a submodule of $V \subseteq Y$ with $V_x \cong U_x.$ Explicitly the subspaces $V_x$ are defined by a morphism $f: \left(\mathbb{C}Qe_x\right)^{\dim U_x} \rightarrow R(Y)$ with $Im(e_xf) = V_x.$ The existence of such a morphism follows from the identification $Y_x = \Hom_{\mathbb{C}Q}(\mathbb{C}Qe_x, R(Y))$ (see for example \cite[Prop 2.2.2]{DerWey} and the fact that if $Q$ is acyclic then $\mathbb{C}Qe_x$ is a finite dimensional representation. This submodule being stable under $t_Z$ implies that $(Y_a)(t_Z(\lambda)U_{x}) \subseteq t_Z(\lambda)Y_a(U_{x}).$
\end{proof}

\begin{Proposition} \label{grass and equi}
    Suppose Q is acyclic. For $Z \in \Mod \limmauro{Q}$ and $R(Z) = Y$ then the following are equivalent: 
    \begin{itemize}
        \item $\Hom(-,Y)$ is $t_Z$-stable, 
        \item $\gr_\textbf{d}(Y) \subseteq \prod_{x \in Q_0} \gr(d_x,Y_x)$ is $t_Z$-stable for all $\textbf{d} \in \mathbb{N}^{Q_0},$
        \item $\exists T \in \mathbb{Z}^{Q_1}$ such that $Z \in \Mod\tlimmauro{Q}{T}.$
    \end{itemize} 
\end{Proposition}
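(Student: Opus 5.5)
The equivalence of the first two conditions is the preceding (unnumbered) lemma, applied to $Y$ regarded with its trivial idempotents. For the third condition we use Lemma \ref{acyclic stable}. So the remaining work is to show that, for $Q$ acyclic, the condition
\[(t_Z(\lambda)^{-1}Y_a t_Z(\lambda))(U) \subseteq Y_a(U) \quad \text{for all } a \in Q_1,\ U \subseteq Y_{s(a)},\ \lambda \in \cstar\]
holds if and only if $Z$ is $T$-homogeneous for some $T \in \mathbb{Z}^{Q_1}$.

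The direction (third $\Rightarrow$ second) is direct. If $Z \in \Mod \tlimmauro{Q}{T}$, the computation in the proof of Lemma \ref{Hom fix} (equivalently Theorem \ref{equivariant}) gives $t_{Z,t(a)}(\lambda)^{-1} Y_a t_{Z,s(a)}(\lambda) = \lambda^{-T_a}Y_a$. This conjugate is a nonzero scalar multiple of $Y_a$, so it preserves every image $Y_a(U)$. Alternatively, submodules are sent to submodules because $t_Z(\lambda)$ multiplies each $Y_a$ by a scalar.

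For the converse, fix an arrow $a\colon x\to y$ and set $A = Y_a$. Write $A_{ji} = z^y_j A z^x_i$, so that
\[t(\lambda)^{-1}A\,t(\lambda) = \sum_{i,j}\lambda^{i-j}A_{ji}\]
(up to the sign convention on conjugation). The hypothesis says that for every subspace $U$, and in particular every line $U=\mathbb{C}u$, the vector $\sum_{i,j}\lambda^{i-j}A_{ji}u$ lies in $\mathbb{C}Au$ for all $\lambda$. Expanding in powers of $\lambda$, which we may do since the condition holds for infinitely many $\lambda$, each homogeneous component $B_k u := \sum_{i-j=k}A_{ji}u$ lies in $\mathbb{C}Au$. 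Thus each $B_k$ satisfies $B_k u \in \mathbb{C} A u$ for all $u$. A standard linear algebra lemma then applies: if two linear maps $B,A\colon V\to W$ satisfy $Bu\in\mathbb{C}Au$ for all $u$, then $B=cA$ for a scalar $c$, provided $\operatorname{rank} A\ge 2$. In the rank-one case one argues directly that $B$ has the same kernel and image, which needs care. Since $A=\sum_k B_k$ and the $B_k$ live in distinct weight components, the relations $B_k=c_kA$ force $A = B_{k}$ for a single $k$ (when $A\neq0$). That $k$ is our $T_a$; if $A=0$ any $T_a$ works. This is exactly $T$-homogeneity at $a$.

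The main obstacle is the scalar-multiple lemma together with the rank-one case. For rank one, write $A = w\otimes\alpha$. The condition $B_ku\in\mathbb{C}w$ for all $u$ gives $\operatorname{Im}B_k\subseteq \mathbb{C}w$, and $\ker A\subseteq\ker B_k$ gives $B_k = c_k w\otimes\alpha$. Hence the $B_k$ are again multiples of $A$. Distinct $k$ correspond to disjoint blocks $(i,j)$, so at most one $c_k$ is nonzero, which is what we need. Finally, acyclicity enters only through Lemma \ref{acyclic stable}, which guarantees that every subspace $U\subseteq Y_x$ occurs in some submodule.
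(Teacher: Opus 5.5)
Your single-arrow analysis is correct, and it is tighter than the paper's. The paper picks a line $U$ on which two blocks $z_i^{t(a)}Y_az_k^{s(a)}$ of different degree are nonzero, and asserts that the lines $(t_Z(\lambda)Y_at_Z(\lambda)^{-1})(U)$ are pairwise distinct. That assertion fails, for instance, when $\dim Y_{t(a)}=1$. Your route avoids this: you expand into the homogeneous pieces $B_k$ and use the lemma that $Bu\in\mathbb{C}Au$ for all $u$ forces $B=cA$, including your rank-one case.

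The genuine gap is the reduction you import from Lemma~\ref{acyclic stable}, which is also the step the paper uses. You deduce $(t_Z(\lambda)^{-1}Y_at_Z(\lambda))(U)\subseteq Y_a(U)$ for every $U\subseteq Y_{s(a)}$ from $t_Z$-stability of all $\gr_\textbf{d}(Y)$. Your justification is that every $U\subseteq Y_x$ is the $x$-component of some submodule, and that does not give this. The smallest such submodule $V$ has $V_{t(a)}=\sum_p Y_p(U)$, the sum over all paths $p$ from $x$ to $t(a)$. So stability of $V$ only yields $(t_Z(\lambda)^{-1}Y_at_Z(\lambda))(U)\subseteq\sum_p Y_p(U)$. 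That is strictly weaker as soon as $a$ is not the only path from $s(a)$ to $t(a)$, and in that situation the ``only if'' half of Lemma~\ref{acyclic stable} is false.

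In fact the implication from the second condition to the third fails for such quivers. Take the Kronecker quiver with arrows $a,b\colon x\to y$, with $Y_x=\mathbb{C}$, $Y_y=\mathbb{C}^2$, $Y_a(1)=(1,0)$ and $Y_b(1)=(0,1)$. Let $Z$ have $z_0^x=\mathrm{Id}$, and let $z_0^y,z_1^y$ be the projections onto $\mathbb{C}(1,1)$ and $\mathbb{C}(1,-1)$ along each other.

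The submodules of $Y$ are exactly $(0,W)$ for arbitrary $W\subseteq\mathbb{C}^2$, together with $(\mathbb{C},\mathbb{C}^2)$. So each $\gr_\textbf{d}(Y)$ is either empty or all of $\prod_x\gr(d_x,Y_x)$, hence $t_Z$-stable, and so is $\Hom(-,Y)$. Yet $z_0^yY_az_0^x\neq0\neq z_1^yY_az_0^x$, so no $T_a$ works and $Z$ lies in no $\Mod\tlimmauro{Q}{T}$. Correspondingly, your per-arrow condition fails for $U=Y_x$, since $t_{Z,y}(\lambda)^{-1}(1,0)\notin\mathbb{C}(1,0)$ for $\lambda\neq1$.

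The repair is an extra hypothesis: every arrow $a$ is the unique path from $s(a)$ to $t(a)$ (for example, $Q$ a tree). Then $V_{t(a)}=Y_a(U)$, the reduction is valid, and your argument proves the statement.
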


\begin{proof}
    We have already shown that the first two conditions are equivalent. 
    We know from Proposition \ref{Grass fix} that the third condition implies the second.  

    Recall that \[t_Z(\lambda)Y_at_Z(\lambda)^{-1} = \sum_{i,k} \lambda^{i-k} z_i^{t(a)} Y_a z_k^{s(a)}.\] To see that the second condition implies the third then suppose that there exists $a \in Q_1$ such that $\sum_{i,k} \lambda^{i-k} z_i^{t(a)} Y_a z_k^{s(a)} \neq \lambda^{T_a} Y_a.$ That is to say there are pairs $(i,k), (j,r)$ such that $i-k \neq j-r,$ $z_i^{t(a)} Y_a z_k^{s(a)} \neq 0,$ and $z_j^{t(a)} Y_a z_r^{s(a)} \neq 0.$ Let $U_{s(a)} \subseteq Y_{s(a)}$ be a line such that $z_i^{t(a)} Y_a z_k^{s(a)}(U_{s(a)}) \neq 0$ and $z_j^{t(a)} Y_a z_r^{s(a)}(U_{s(a)}) \neq 0.$ This implies that for $\lambda \neq \sigma \in \cstar$ then $(t_Z(\lambda)Y_at_Z(\lambda)^{-1})(U_{s(a)})$ and $(t_Z(\sigma)Y_at_Z(\sigma)^{-1})(U_{s(a)})$ are different one-dimensional subspaces. Since $Y_a(U_{s(a)})$ is at most one-dimensional this means there is a $\lambda \in \cstar$ such that $t_Z(\lambda)Y_at_Z(\lambda)^{-1}(U_{s(a)}) \not\subseteq Y_a(U_{s(a)}).$ By Lemma \ref{acyclic stable} this tells us that the second condition implies the third.   
\end{proof}

\begin{Lemma}
    For $Z,Y\in \Mod \limmauro{Q}$ then $t_Z$ acts equivariantly on $\gamma(Y)$ for some $T \in \mathbb{Z}^{\Gamma(Y)_0}$ if and only if $\gr_\textbf{d}(Y)$ is $t_Z$-stable for all $\textbf{d} \in \mathbb{N}^{Q_0}.$
\end{Lemma}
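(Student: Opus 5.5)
The plan is to reduce the statement to Proposition \ref{grass and equi}, applied to the quiver $\Gamma(Y)$ instead of $Q$, using Lemma \ref{splitting up} to move between $\gr_\textbf{d}(Y)$ and the Grassmannians $\gr_\textbf{e}(\gamma(Y))$. I read the statement with $Y$ as an object of $\Mod \limmauro{Q}$ whose Grassmannians are the $\gr_\textbf{d}(Y)$, and with $T \in \mathbb{Z}^{\Gamma(Y)_1}$; the index $\Gamma(Y)_0$ in the statement is presumably a typo. In the spirit of Lemma \ref{acyclic stable} I will assume $Q$ is acyclic, since the implication from stability to equivariance needs the same hypothesis as Proposition \ref{grass and equi}. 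I will also take $t_Z$ to mean its action on $\bigoplus_x Y_x$, so that it makes sense on each $\gamma(Y)_y = \mathrm{Im}(y_i^x)$.

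\textbf{Step 1: reduce to the case that $t_Z$ commutes with the idempotents $y_i^x$.} The isomorphism $p$ in Lemma \ref{splitting up} is built from the projections $y_i^x$. For $t_Z$ to act on $\gamma(Y)$ at all, it must preserve each $\mathrm{Im}(y_i^x)$, that is, commute with every $y_i^x$. I will show that either side of the equivalence forces this.

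For the equivariance side this holds by definition, because an equivariant action on $\gamma(Y)$ is an action on each vertex space of $\gamma(Y)$.

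For the stability side, I claim $\gr_\textbf{d}(Y)$ contains every product of subspaces compatible with the weight decomposition. Concretely, pick a line in a single $\mathrm{Im}(y_i^x)$ and take the $t_Y$-stable submodule it generates. This gives a point of $\gr_\textbf{d}(Y)$ containing that line. Stability under $t_Z$ then says $t_Z$ sends such lines into $t_Y$-graded submodules. Applying this to suitable small $\textbf{d}$, for example simple-type submodules at sinks and induction along the acyclic order, should force $t_Z$ to preserve each weight space. This is the step I expect to be the main obstacle.

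If that argument is too weak, the fallback is to interpret ``$\gr_\textbf{d}(Y)$ is $t_Z$-stable'' via Proposition \ref{Grass fix}, where $\gr_\textbf{d}(Y) = \gr_\textbf{d}(R(Y)) \cap \prod_x \gr(d_x,Y_x)^{\cstar}$. Under that reading, stability of the torus-fixed factor directly requires $t_Z$ to commute with $t_Y$, and so with all the $y_i^x$.

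\textbf{Step 2: transport the action through Lemma \ref{splitting up}.} Once $t_Z$ commutes with the idempotents, $p$ is $t_Z$-equivariant. Hence $\gr_\textbf{d}(Y)$ is $t_Z$-stable for all $\textbf{d}$ if and only if each component $\gr_\textbf{e}(\gamma(Y))$ is $t_Z$-stable, as $\textbf{e}$ ranges over $\mathbb{N}^{\Gamma(Y)_0}$. Here $t_Z$ restricts to a one-parameter subgroup of $\mathrm{G}_\textbf{e}$ for the quiver $\Gamma(Y)$. That restricted action defines an object $Z' = \gamma(Y)(t_Z) \in \Mod \limmauro{\Gamma(Y)}$ with $R(Z') = \gamma(Y)$.

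\textbf{Step 3: apply Proposition \ref{grass and equi} to $\Gamma(Y)$.} Since $Q$ is acyclic and $\delta_Y : \Gamma(Y) \rightarrow Q$ is a morphism of quivers, $\Gamma(Y)$ is also acyclic. Proposition \ref{grass and equi} then says that $t_Z$-stability of all $\gr_\textbf{e}(\gamma(Y))$ is equivalent to $Z' \in \Mod \tlimmauro{\Gamma(Y)}{T}$ for some $T \in \mathbb{Z}^{\Gamma(Y)_1}$. By Theorem \ref{equivariant}, this is exactly the statement that $t_Z$ acts equivariantly on $\gamma(Y)$. Chaining Steps 1–3 gives the equivalence.
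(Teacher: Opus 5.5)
Your reduction is the paper's own proof, which says only that the lemma follows from Lemma \ref{splitting up} and Proposition \ref{grass and equi}. Your Step 1 makes explicit something the paper leaves implicit: $t_Z$ must commute with the idempotents $y_i^x$ in order to act on $\gamma(Y)$ at all. The genuine gap, shared with the paper, is in Step 3. The implication of Proposition \ref{grass and equi} that stability of all quiver Grassmannians forces $T$-homogeneity fails whenever the quiver has two distinct paths with the same endpoints, even for acyclic quivers, so your added acyclicity hypothesis does not help. With it, the ``if'' direction of the statement fails.

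Here is a counterexample. Take $Q$ the Kronecker quiver with arrows $a,b\colon x\to y$, and let $P_x=\mathbb{C}$, $P_y=\mathbb{C}^2$, $P_a=\begin{psmallmatrix}1\\1\end{psmallmatrix}$, $P_b=\begin{psmallmatrix}1\\-1\end{psmallmatrix}$. Put $Y=i_0(P)$, so $\Gamma(Y)=Q$ and $\gamma(Y)=P$. Let $Z$ have $R(Z)=P$ and $t_Z(\lambda)=(1,\mathrm{diag}(1,\lambda))$.
\begin{itemize}
\item Every $\gr_\textbf{d}(Y)$ is $t_Z$-stable. For $d_x=0$ it is all of $\gr(d_y,\mathbb{C}^2)$. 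For $d_x=1$ it is empty or the single point $(\mathbb{C},\mathbb{C}^2)$, because $P_a(\mathbb{C})+P_b(\mathbb{C})=\mathbb{C}^2$.
\item Yet $t_{Z,y}(\lambda)P_a t_{Z,x}(\lambda)^{-1}=\begin{psmallmatrix}1\\ \lambda\end{psmallmatrix}$ is not a multiple of $P_a$ for $\lambda\neq 1$. So $t_Z$ is equivariant on $\gamma(Y)$ for no $T$.
\end{itemize}
The error sits in the proof of Lemma \ref{acyclic stable}. The smallest submodule containing $U_{s(a)}$ has $\sum_p Y_p(U_{s(a)})$ at $t(a)$, summed over all paths $p$ from $s(a)$ to $t(a)$, not $Y_a(U_{s(a)})$. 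Hence stability does not yield $Y_a(t_Z(\lambda)U_{s(a)})\subseteq t_Z(\lambda)Y_a(U_{s(a)})$.

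So the ``if'' direction cannot be proved as stated. Your ``only if'' direction is fine: Step 1 holds by definition, then Step 2 and the easy direction of Proposition \ref{Grass fix} finish it. To rescue the converse you need an extra hypothesis, e.g.\ that $\Gamma(Y)$ has at most one path between any two vertices (this already forces acyclicity). Write $t_u(\lambda)$ for the restriction of $t_Z(\lambda)$ to $\gamma(Y)_u$, and set $B_\lambda=t_w(\lambda)^{-1}\gamma(Y)_\alpha t_u(\lambda)$.
\begin{itemize}
\item For an arrow $\alpha\colon u\to w$ of $\Gamma(Y)$ and $U\subseteq\gamma(Y)_u$, the submodule generated by $U$ is $U$ at $u$ and $\gamma(Y)_\alpha(U)$ at $w$.
\item Stability then gives $B_\lambda(U)\subseteq\gamma(Y)_\alpha(U)$ for all $U$.
\item Applying this to lines forces $B_\lambda=c(\lambda)\gamma(Y)_\alpha$ with $c$ a character, which is exactly $T$-homogeneity.
\end{itemize}

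Separately, your Step 1 sketch is not yet an argument, and the fallback silently changes the hypothesis. The clean version runs as follows.
\begin{itemize}
\item By Lemma \ref{splitting up}, $\gr_\textbf{d}(Y)$ is a disjoint union of finitely many closed pieces $\gr_\textbf{e}(\gamma(Y))$.
\item The connected group $\cstar$ therefore preserves each piece.
\item Apply this to the submodule generated by a line $L\subseteq\mathrm{Im}(y_i^x)$, whose $x$-component is $L$ by acyclicity. This gives $t_Z(\lambda)L\subseteq\mathrm{Im}(y_i^x)$.
\end{itemize}
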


\begin{proof}
    This follows from Lemma \ref{splitting up} and Proposition \ref{grass and equi}. 
\end{proof}

This now gives us another way of characterising equivariant sequences. 

\begin{Theorem} \label{Main theorem}
    Given $(M_0,\cdots,M_r)$ with $M_i \in \Mod \limmauro{Q}$ and $R(M_i) = M$ such that $\Hom(-,M_{i-1})$ is $t_{M_i}$ stable for each $i \in \{1,\cdots,r\}$ and $\Hom(\textbf{d},M_i) \subseteq \Hom(\textbf{d},M_{i-1})$ $\forall \textbf{d} \in \mathbb{N}^{Q_0}$ then \[\chi(\gr_\textbf{d}(M_r)) = \chi(\gr_\textbf{d}(M_0)),\, \forall \textbf{d} \in \mathbb{N}^{Q_0}.\]
\end{Theorem}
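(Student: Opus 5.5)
The plan is to argue by induction on $r$, so it suffices to treat a single step and show $\chi(\gr_\textbf{d}(M_i)) = \chi(\gr_\textbf{d}(M_{i-1}))$ for each $i$. The idea is to realise $\gr_\textbf{d}(M_i)$ as, up to Euler characteristic, the $\cstar$-fixed locus of $\gr_\textbf{d}(M_{i-1})$ under the action $t_{M_i}$, and then apply the Białynicki-Birula fixed-point principle \cite{BB}, exactly as in Proposition \ref{Euler chars}.

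\textbf{Step 1 (the action is well defined).} Since $R(M_i)=R(M_{i-1})=M$, the one-parameter subgroup $t_{M_i}$ lies in $\mathrm{G}_\textbf{m}$ and acts on $\prod_x \gr(d_x,M_x)$. By hypothesis $\Hom(-,M_{i-1})$ is $t_{M_i}$-stable. The lemma preceding Lemma \ref{acyclic stable}, which equates $t_Z$-stability of $\Hom(-,Y)$ with $t_Z$-stability of every $\gr_\textbf{d}(Y)$, then shows that $\gr_\textbf{d}(M_{i-1})$ is a closed $t_{M_i}$-stable subvariety of $\prod_x\gr(d_x,M_x)$. It is projective, so \cite{BB} gives
\[\chi(\gr_\textbf{d}(M_{i-1})) = \chi\bigl(\gr_\textbf{d}(M_{i-1})^{t_{M_i}}\bigr).\]

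\textbf{Step 2 (identify the fixed locus).} Points of $\prod_x \gr(d_x,M_x)$ fixed by $t_{M_i}$ are tuples of subspaces that are sums of their intersections with the images of the idempotents of $M_i$. The description in the proof of Lemma \ref{splitting up} (via Lemmas \ref{embed morph} and \ref{Hom dec} and Proposition \ref{Grass fix}) gives $\gr_\textbf{d}(M_i) = \gr_\textbf{d}(M) \cap \prod_x \gr(d_x,M_x)^{t_{M_i}}$. Similarly, $\gr_\textbf{d}(M_{i-1}) = \gr_\textbf{d}(M)\cap \prod_x\gr(d_x,M_x)^{t_{M_{i-1}}}$. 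Hence
\[\gr_\textbf{d}(M_{i-1})^{t_{M_i}} = \gr_\textbf{d}(M_{i-1})\cap \gr_\textbf{d}(M_i).\]

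\textbf{Step 3 (use the inclusion hypothesis).} A point of $\gr_\textbf{d}(M_i)$ is the image of an injective morphism in $\Hom(\textbf{d},M_i)$, taken modulo $\mathrm{G}_\textbf{d}$. Since $\Hom(\textbf{d},M_i)\subseteq \Hom(\textbf{d},M_{i-1})$, such an embedding is also (the image of) a morphism into $M_{i-1}$. This gives $\gr_\textbf{d}(M_i)\subseteq \gr_\textbf{d}(M_{i-1})$, so the intersection in Step 2 is just $\gr_\textbf{d}(M_i)$. Combining Steps 1–3 yields $\chi(\gr_\textbf{d}(M_i))=\chi(\gr_\textbf{d}(M_{i-1}))$, and induction finishes the proof.

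\textbf{Main obstacle.} The delicate point is Step 3: passing from the inclusion of morphism spaces to an inclusion of Grassmannians. One has to check that injectivity and image are preserved, i.e.\ that an element of $\Hom(\textbf{d},M_i)$ with injective components, viewed as a morphism $U'\to M_{i-1}$ for some $U'$, has image in $\gr_\textbf{d}(M_{i-1})$. I would handle this via the quotient description $\Hom^0(\textbf{d},-)/\mathrm{G}_\textbf{d}$, restricting both spaces to their open loci of injective maps. The identification in Step 2 of the $t_{M_i}$-fixed points of $\prod_x\gr(d_x,M_x)$ should be routine, following the weight-space splitting used in Lemma \ref{splitting up}.
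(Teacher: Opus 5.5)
Your proposal is correct and is essentially the argument the paper intends. The theorem is stated there without a written proof, but it is meant to follow from the preceding lemma: $t_{M_i}$-stability of $\Hom(-,M_{i-1})$ gives a $t_{M_i}$-action on $\gr_\textbf{d}(M_{i-1})$, and combining this with the identification $\gr_\textbf{d}(M_{i-1})^{t_{M_i}}=\gr_\textbf{d}(M_{i-1})\cap\gr_\textbf{d}(M_i)$ and the fixed-point argument of Proposition \ref{Euler chars} is exactly your Steps 1--2. The step you flag as delicate is in fact immediate: an injective $f\in\Hom(\textbf{d},M_i)\subseteq\Hom(\textbf{d},M_{i-1})$ is a morphism $U'\to M_{i-1}$ for some $U'$, and its image is the same subspace tuple, now a submodule of $M_{i-1}$ of dimension vector $\textbf{d}$; hence $\gr_\textbf{d}(M_i)\subseteq\gr_\textbf{d}(M_{i-1})$.
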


We have now developed a dictionary to move between the following data:
\[\text{Nice sequences} \leftrightarrow \text{Equivariant sequences} \leftrightarrow \text{Sequences inducing Hom stability}.\] It would be interesting to see if this final type of data can be given a categorical interpretation in terms of the functor $\Hom(-,M_i).$

\section{Examples}
We aim to demonstrate some of the results on Euler characteristics through two examples.

\begin{Example}
    We first give an example with a primitive equivariant lift. 
    Consider the representation $M =$ 
    \[\begin{tikzcd}
	{\mathbb{C}^3} &&& {\mathbb{C}^3}
	\arrow["{f_1}", shift left=4, from=1-1, to=1-4]
	\arrow["{f_3}"', shift right=4, color={rgb,255:red,92;green,92;blue,214}, from=1-1, to=1-4]
	\arrow["{f_2}"{description}, color={rgb,255:red,214;green,92;blue,92}, from=1-1, to=1-4]
    \end{tikzcd}\]
    where \[f_1 = \left(\begin{matrix} 1 & 0 & 0 \\ 0 & 1 & 0 \\ 0 & 0 & 0 \end{matrix}\right),\] \[f_2 = \left(\begin{matrix} 0 & 0 & 0 \\ 1 & 0 & 0 \\ 0 & 0 & 1\end{matrix}\right),\] \[f_3 = \left(\begin{matrix} 0 & -1 & 0 \\ 0 & 0 & 0 \\ 1 & 0 & 0\end{matrix}\right).\] Let $\textbf{d} = (1,2).$ By fixing an isomorphism $\mathbb{P}^2 \cong \gr(2, 3)$ we obtain equations for this quiver Grassmannian \[\gr_{\textbf{d}}(M) \subseteq \mathbb{P}^{2}_{[x:y:z]} \times \mathbb{P}^{2}_{[a:b:c]},\] by \[ax + by = 0, bx + cz = 0,cx - ay = 0.\] We observe then that the map \[t^{'}(\lambda) = \left(\left(\begin{matrix} 1 & 0 & 0 \\ 0 & \lambda & 0 \\ 0 & 0 & \lambda^{-2} \end{matrix}\right), \left(\begin{matrix} 1 & 0 & 0 \\ 0 & \lambda^{-1} & 0 \\ 0 & 0 & \lambda \end{matrix}\right)\right)\] induces an action on $\mathbb{P}^{2}_{[x:y:z]} \times \mathbb{P}^{2}_{[a:b:c]}$ under which $\gr_{\textbf{d}}(M)$ is stable. Since we passed to a quotient at the second vertex the morphism we will want to consider for equivariance is \[t(\lambda) = \left(\left(\begin{matrix} 1 & 0 & 0 \\ 0 & \lambda & 0 \\ 0 & 0 & \lambda^{-2} \end{matrix}\right), \left(\begin{matrix} 1 & 0 & 0 \\ 0 & \lambda & 0 \\ 0 & 0 & \lambda^{-1} \end{matrix}\right)\right).\] We now check that $M(t)$ is a primitive equivariant lift. We display $\Gamma(M(t)):$ 
    \[
    \begin{tikzcd}
	x &&& a \\
	y &&& b \\
	z &&& c.
	\arrow[from=1-1, to=1-4]
	\arrow[color={rgb,255:red,214;green,92;blue,92}, from=1-1, to=2-4]
	\arrow[color={rgb,255:red,92;green,92;blue,214}, from=1-1, to=3-4]
	\arrow[color={rgb,255:red,92;green,92;blue,214}, from=2-1, to=1-4]
	\arrow[from=2-1, to=2-4]
	\arrow[color={rgb,255:red,214;green,92;blue,92}, from=3-1, to=3-4]
    \end{tikzcd}
    \]

    The choice of $t$ defines the grading $\partial$ with $\partial(x) = 0, \partial(y) = 1, \partial(z) = -2, \partial(a) = 1, \partial(b) = 1, \partial(c) = -1.$ The black arrows correspond to $f_1,$ the red to $f_2$ and the blue to $f_3.$ We see from $\Gamma(M(t))$ that the black arrows preserve the grading. The red arrows shift the grading by 1 and the blue arrows shift the grading by -1. This tells us that for $T = (0,1,-1)$ we have $M(t) \in \Mod \tlimmauro Q T.$ We can use it to compute $\gr_{\textbf{d}}(M(t)).$ It is a zero dimensional variety where $|\gr_{\textbf{d}}(M(t))| = 3,$ i.e. it is the disjoint union of three points. We should check that we actually believe that $$\chi(\gr_{\textbf{d}}(M)) = 3$$ which we can do through either projection. If we project onto the first vertex we get a map $$p_1: \gr_{\textbf{d}}(M) \twoheadrightarrow V(y^2z - x^3) \subseteq \mathbb{P}^2$$ which is bijective over the smooth locus and has fibre $\mathbb{P}^1$ over the isolated singularity. If we project onto the second vertex we see that $$p_2: \gr_{\textbf{d}}(M) \twoheadrightarrow V(a^2b + b^2c) \subset \mathbb{P}^2$$ is a normalisation. The computation is confirmed by the fact that the Euler characteristic of the cuspidal cubic is two or that the Euler characteristic of the second cubic is three. 

    We can deform this family to give an example of Lemma \ref{no zeros}. Consider $M_r = $

     \[\begin{tikzcd}
	{\mathbb{C}^3} &&& {\mathbb{C}^3}
	\arrow["{f_1}", shift left=4, from=1-1, to=1-4]
	\arrow["{f_3(r)}"', shift right=4, color={rgb,255:red,92;green,92;blue,214}, from=1-1, to=1-4]
	\arrow["{f_2}"{description}, color={rgb,255:red,214;green,92;blue,92}, from=1-1, to=1-4]
    \end{tikzcd}
    \]

    where we have deformed to $$f_3(r) = \left(\begin{matrix} 0 & -1 & 0 \\ 0 & 0 & r \\ 1 & 0 & 0\end{matrix}\right).$$ Notice that $M_0 = M.$ When $r \neq 0$ then $\gr_{\textbf{d}}(M_r)$ is isomorphic to the smooth cubic $$V(Y^2Z - X^3 - rXZ^2) \subset \mathbb{P}^2.$$ Since it is a smooth cubic curve it has Euler characteristic zero and we can likewise see that the extra arrow provides the obstruction to a primitive equivariant lift \[
    \begin{tikzcd}
	x &&& a \\
	y &&& b \\
	z &&& c.
	\arrow[from=1-1, to=1-4]
	\arrow[color={rgb,255:red,214;green,92;blue,92}, from=1-1, to=2-4]
	\arrow[color={rgb,255:red,92;green,92;blue,214}, from=1-1, to=3-4]
	\arrow[color={rgb,255:red,92;green,92;blue,214}, from=2-1, to=1-4]
	\arrow[from=2-1, to=2-4]
	\arrow["{r\neq0}"{description}, color={rgb,255:red,92;green,92;blue,214}, from=3-1, to=2-4]
	\arrow[color={rgb,255:red,214;green,92;blue,92}, from=3-1, to=3-4]
    \end{tikzcd}
    \] 
\end{Example}

\begin{Example} \label{lim ex}
    We give an example where two steps are definitely needed in Proposition \ref{haupt}. We can consider Example 4.7 from \cite{JS}. Let $M =$
    \[\begin{tikzcd}
	{\mathbb{C}^5}
	\arrow["{f_1}", from=1-1, to=1-1, loop, in=145, out=215, distance=10mm]
	\arrow["{f_2}", from=1-1, to=1-1, loop, in=325, out=35, distance=10mm]
\end{tikzcd}\]
    where $$f_1 = \left(\begin{matrix} 0 & 0 & 0 & 0 & 0 \\ 1 & 0 & 0 & 0 & 0 \\ 0 & 0 & 0 & 1 & 0 \\ 0 & 0 & 0 & 0 & 0 \\ 0 & 0 & 0 & 0 & 0\end{matrix}\right),$$ $$f_2 = \left(\begin{matrix} 0 & 0 & 0 & 0 & 0 \\ 0 & 0 & 0 & 0 & 0 \\ 0 & 1 & 0 & 0 & 0 \\ 0 & 0 & 0 & 0 & 1 \\ 0 & 0 & 0 & 0 & 0\end{matrix}\right).$$ This module doesn't have a primitive equivariant lift however by first using the equivariant lift given by $$t(\lambda) =  \left(\begin{matrix} \lambda^3 & 0 & 0 & 0 & 0 \\ 0 & \lambda^2 & 0 & 0 & 0 \\ 0 & 0 & 1 & 0 & 0 \\ 0 & 0 & 0 & \lambda & 0 \\ 0 & 0 & 0 & 0 & \lambda^3 \end{matrix}\right)$$ we get $\gamma(M(t)) = $

    \[\begin{tikzcd}
	& {\mathbb{C}^2} \\
	{\mathbb{C}} && {\mathbb{C}} \\
	& {\mathbb{C}}
	\arrow["{\rowveca}"', from=1-2, to=2-1]
	\arrow["{\rowvecb}", from=1-2, to=2-3]
	\arrow["1"', from=2-1, to=3-2]
	\arrow["1", from=2-3, to=3-2]
    \end{tikzcd}\]

    The limit of the associated equivariant sequence is then given by attaching an idempotent to M for each element of the standard basis of M which projects onto the corresponding basis element. The remaining idempotents are then zero.   
\end{Example}

\section{Filtrations and representation varieties}
Recall the representation variety \[\mathrm{E}^{Q}_\textbf{m} = \bigoplus_{a \in Q_1} \Hom(\mathbb{C}^{m_{s(a)}},\mathbb{C}^{m_{t(a)}})\] of a quiver Q for $\textbf{m} \in \mathbb{N}^{Q_0}.$ As before, the group $\mathrm{G}_\textbf{m} = \displaystyle\prod_{x \in Q_0} \mathrm{GL}(\mathbb{C}^{m_x})$ acts on the representation variety. The orbits are in bijection with isoclasses of representations of dimension vector $\textbf{m}.$ Let $Q$ be an equioriented quiver of type A$_n.$ That is to say, the vertices of $Q$ are given by $Q_0 = \{1, \dots,n\}$ and the arrows are given by $a_{i}: i \rightarrow i+1$ for $i \in \{1,\dots,n-1\}.$ The doubled quiver $\overline{Q}$ has the same vertices as $Q$ and arrows $\{a \in Q_1\} \cup \{a^*: t(a) \rightarrow s(a), \, \text{ for } a \in Q_1\}.$ The representation variety of $\overline{Q}$ \[\mathrm{E}^{\overline{Q}}_\textbf{m} = \bigoplus_{a \in Q_1} \left(\Hom(\mathbb{C}^{m_{s(a)}}, \mathbb{C}^{m_{t(a)}}) \oplus \Hom(\mathbb{C}^{m_{t(a)}}, \mathbb{C}^{m_{s(a)}})\right)\] can be identified with the cotangent space of $\mathrm{E}_\textbf{m}.$ The map \[\mu:\mathrm{E}^{\overline{Q}}_\textbf{m} = \mathrm{T}^*\mathrm{E}_\textbf{m} \rightarrow \oplus_{i \in Q_0} \mathfrak{gl}_{m_i}\] given by \[\mu((x_a,x_{a^*})_{a \in Q_1}) = \left(\sum_{\substack{a \in Q_1\\ t(a) = i}} x_{a^*}x_a - \sum_{\substack{a \in Q_1\\ h(a) = i}} x_ax_{a^*}\right)_{i \in Q_0}\] is called a moment map. There is a natural $\cstar$-action on $\mathrm{T}^*\mathrm{E}_\textbf{m}$ which acts by $\lambda^{-1}$ on the fibres of $\mathrm{T}^*\mathrm{E}_\textbf{m} \rightarrow \mathrm{E}_\textbf{m}.$ This is the same as letting $T = (0,-1,0,-1,\dotsc,0,-1) \in \mathbb{Z}^{\overline{Q}_1}$ with the zeros associated to $a \in Q_1$ and the $-1$s associated to the $a^*.$ Letting $\cstar$ act on $\oplus_{i \in Q_0} \mathfrak{gl}_{m_i}$ via multiplication by $\lambda^{-1}$ we see that $\mu$ is $\cstar$-equivariant. In particular, the fibres are stable under this $\cstar$-action. The subspace $\Lambda_\textbf{m} = \mu^{-1}(0)$ is the representation variety of dimension vector $\textbf{m}$ for the preprojective algebra of $Q.$   

In \cite[Section 9]{GLSsemi}, Geiss, Leclerc, and Schr\"oer study a Galois covering of the type A$_n$ preprojective algebra $\Pi(A_n).$ Let $\tilde{Q}_n$ be the quiver with vertices $\{i_j: i \in \{1,\cdots,n\},\,j\in\mathbb{Z}\}$ and arrows $\alpha_{i,j} : i_j \rightarrow (i+1)_j$ and $\alpha^*_{i,j}: (i+1)_j \rightarrow i_{j-1}.$ The relations $J_n$ are generated by \[\alpha^*_{1,j}\alpha_{1,j},\,\, \alpha_{n-1,j-1}\alpha^*_{n-1,j}, \, \, \alpha^*_{i,j}\alpha_{i,j} - \alpha_{i-1,j-1}\alpha^*_{i-1,j}.\] The natural morphism of quivers $F: \tilde{Q}_n \rightarrow \overline{Q}$ induces the pushforward \[F_*:\Mod \mathbb{C}\tilde{Q}_n/J_n \rightarrow \Mod \Pi(A_n).\] This is a Galois covering and is used in \cite{GLSsemi} to study the irreducible components of the representation varieties $\Lambda_\textbf{m}$ when $n \leq 5.$ Let $v: (\tilde{Q}_n)_0 \rightarrow \mathbb{Z}$ be given by $v(i_j) = j.$ This is injective on the sets $\{y \in (\tilde{Q}_n)_0:F_0(y) = i\}.$ 

Recall from Definition \ref{Alg mauro} that for $\Pi(A_n)$ we have the categories $\Mod \limmauro {\Pi(A_n)}$ and $\Mod \tlimmauro{\Pi(A_n)}{T}.$ These are the full subcategories of $\Mod \limmauro{\overline{Q}}$ and $\Mod \tlimmauro{\overline{Q}}{T}$ respectively consisting of objects $N$ such that $R(N) \in \Mod \Pi(A_n).$ It follows from Lemma \ref{orbit quiver} that there is an embedding \[\rho_{F,v}:\Mod \mathbb{C}\tilde{Q}_n/J_n \rightarrow \Mod \limmauro {\Pi(A_n)}.\] Technically Lemma \ref{orbit quiver} gives us an embedding to $\Mod \limmauro{\overline{Q}}$ however it is easy to check that it embeds into the full subcategory $\Mod \limmauro {\Pi(A_n)}.$

\begin{Proposition}
    The functor $\rho_{F,v}$ induces an equivalence of categories \[\Mod \mathbb{C}\tilde{Q}_n/J_n  \rightarrow  \Mod \tlimmauro{\Pi(A_n)}{T}\] for $T = (0,-1,0,-1,\dotsc,0,-1).$
\end{Proposition}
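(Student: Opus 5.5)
The plan is to show three things in order: that $\rho_{F,v}$ lands in $\Mod \tlimmauro{\Pi(A_n)}{T}$, that it is fully faithful, and that it is essentially surjective.

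\textbf{Landing in the right category.} Lemma \ref{orbit quiver} applies to $F\colon \tilde{Q}_n \rightarrow \overline{Q}$ with $G \cong \mathbb{Z}$ acting by $i_j \mapsto i_{j+1}$.

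First, $F$ is a winding. The arrows over $a_i$ are the $\alpha_{i,j}$, and these have pairwise distinct sources $i_j$ and targets $(i+1)_j$. The arrows over $a_i^*$ are the $\alpha^*_{i,j}$, with distinct sources $(i+1)_j$ and targets $i_{j-1}$.

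Second, the weight function $w_v$ is $G$-invariant. We have $w_v(\alpha_{i,j}) = j-j = 0$ and $w_v(\alpha^*_{i,j}) = (j-1)-j = -1$. This gives exactly $T = (0,-1,\dots,0,-1)$.

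So $\mathrm{Im}(\rho_{F,v}) \subseteq \Mod \tlimmauro{\overline{Q}}{T}$. Next we check that $R(\rho_{F,v}(M))$ satisfies the preprojective relations when $M$ satisfies $J_n$. At vertex $i$ of $\overline{Q}$, the component of $a_i^* a_i - a_{i-1}a_{i-1}^*$ on the summand $M_{i_j}$ is exactly $M(\alpha^*_{i,j}\alpha_{i,j} - \alpha_{i-1,j-1}\alpha^*_{i-1,j})$. This is because $F_*$ is a direct sum over lifts, and each path of length two has a unique lift starting at $i_j$. The boundary relations at $1$ and $n$ are handled the same way. Hence the image lies in $\Mod \tlimmauro{\Pi(A_n)}{T}$.

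\textbf{Full faithfulness.} The embedding $\rho_{F,v}$ is already faithful by the earlier lemma. For fullness, take $\phi \in \Hom_{\limmauro{Q}}(\rho(M),\rho(N))$. By Lemma \ref{embed morph}, $\phi_x = \sum_i n_i^x \phi_x m_i^x$, so $\phi$ is block diagonal with respect to the decomposition $\bigoplus_j M_{i_j}$. Its blocks $\phi_{i_j}$ commute with each $\alpha_{i,j}$ and $\alpha^*_{i,j}$, because the $(j',j)$-block of $\rho(M)_a$ is $M_b$ for the unique lift $b$ of $a$ from $i_j$ to $i'_{j'}$ (if one exists) and zero otherwise. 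Therefore $\phi = \rho(f)$ for some $f \in \Hom(M,N)$.

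\textbf{Essential surjectivity.} Given $N \in \Mod \tlimmauro{\Pi(A_n)}{T}$, define $M_{i_j} = \mathrm{Im}(n^i_j)$. Let $M(\alpha_{i,j}) = n^{i+1}_j N_{a_i} n^i_j$ and $M(\alpha^*_{i,j}) = n^{i}_{j-1} N_{a_i^*} n^{i+1}_j$.

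$T$-homogeneity says $n^{t(a)}_k N_a n^{s(a)}_l = 0$ unless $k - l = T_a$. So $N_{a_i} = \sum_j n^{i+1}_j N_{a_i} n^i_j$ and $N_{a_i^*} = \sum_j n^{i}_{j-1} N_{a_i^*} n^{i+1}_j$, which means $N \cong \rho_{F,v}(M)$ via the idempotent decomposition, compatibly with the weights $v$.

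It remains to check that $M$ satisfies $J_n$. Compress the preprojective relation $N_{a_i^*}N_{a_i} - N_{a_{i-1}}N_{a_{i-1}^*} = 0$ on both sides, on the right by $n^i_j$ and on the left by $n^i_{j-1}$. Because of the block structure just established, this compression is exactly the relation $\alpha^*_{i,j}\alpha_{i,j} - \alpha_{i-1,j-1}\alpha^*_{i-1,j}$ evaluated on $M$. The other compressions of the same relation vanish automatically by homogeneity. The endpoint relations are compressed in the same way.

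\textbf{Main obstacle.} The hard step is this last bookkeeping: verifying that the compressed preprojective relations correspond \emph{exactly} to the generators of $J_n$, with the index shift $j \mapsto j-1$ on the starred arrows matching $T_{a^*} = -1$. The paper's sign and indexing conventions (for instance $t(a)$ versus $h(a)$ in the moment map) need care here.

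If one prefers to avoid doing this directly, an alternative route is available. It combines Theorem \ref{equivariant}, which identifies the target with $\mathbb{C}^\times$-equivariant $\Pi(A_n)$-modules, i.e.\ $\mathbb{Z}$-graded modules for the grading with $\deg a = 0$ and $\deg a^* = -1$, with the standard identification of graded modules with modules over the smash product / Galois cover $\mathbb{C}\tilde{Q}_n/J_n$.
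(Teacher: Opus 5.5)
Your proposal is correct and follows essentially the same route as the paper. Lemma \ref{orbit quiver} (winding, plus shift-invariance of $v(t(a))-v(s(a))$) places the image in the $T$-homogeneous subcategory, full faithfulness comes from $\rho_{F,v}$ being an embedding, and essential surjectivity comes from decomposing $N$ along its idempotents. Your $M$ with $M_{i_j}=\mathrm{Im}(n^i_j)$ is exactly the paper's $\gamma(N)$ viewed on $\tilde{Q}_n$, with $\rho_{F,v}(\gamma(N))\cong N$. Your explicit checks supply details the paper only asserts: the preprojective relations hold in the image, fullness follows from uniqueness of lifts under the winding, and compressing the preprojective relations recovers the generators of $J_n$.
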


\begin{proof}
    We will show the embedding $\rho_{F,v}$ factors via $\Mod \tlimmauro{\Pi(A_n)}{T}$ and is essentially surjective on $\Mod \tlimmauro{\Pi(A_n)}{T}.$ As explained in \cite{GLSsemi} the map $F: \tilde{Q}_n \rightarrow \overline{Q}$ is quotienting by a $\mathbb{Z}$-action. This action is shifting the indices on the vertices. Clearly $v(t(a)) - v(s(a))$ is invariant under this shift and the morphism $F$ is a winding. By Lemma \ref{orbit quiver} we see that $\mathrm{Im}(\rho_{F,v}) \subseteq \Mod \tlimmauro{\Pi(A_n)}{T}$ for $T = (0,-1,\dotsc,0,-1).$ Consider $M \in \Mod \tlimmauro{\Pi(A_n)}{T}.$ Since $M$ satisfies $T$-homogeneity then we know that $\Gamma(M)$ embeds as a subquiver of $\tilde{Q}_n$ and satisfies the necessary relations given by $J_n$ since these are induced by the preprojective relations. This allows us to consider $\gamma(M)$ as an object of $\Mod \mathbb{C}\tilde{Q}_n/J_n.$ Since $\rho_{F,v}(\gamma(M)) = M$ we have that the functor \[\Mod \mathbb{C}\tilde{Q}_n/J_n  \rightarrow  \Mod \tlimmauro{\Pi(A_n)}{T}\] is essentially surjective.
\end{proof}

\begin{Remark}
    We see by Theorem \ref{equivariant} that the category of representations of the Galois cover is equivalent to the category of $\cstar$-equivariant $\Pi(A_n)$ modules with $\cstar$ acting on the fibres of the cotangent bundle as above. 
\end{Remark}

Let $\textbf{j}$ be a reduced expression for the longest element in $S_{n+1}.$ Let $l_n = n(n+1)/2.$ We will think of $\textbf{j}$ as a function \[\textbf{j}:\{1,\cdots,l_n\} \rightarrow \{1,\cdots,n\}.\] Furthermore, we define \[\textbf{j}_x = \{i:\textbf{j}(i) = x\}.\]

Let $\overline{Q}^\textbf{j}$ denote the quiver with $\overline{Q}^\textbf{j}_0 = Q_0$ and $\overline{Q}^\textbf{j}_1 = \overline{Q}_1 \cup \{\ell_i^x, \, x \in Q_0, \, i \in \textbf{j}_x\}.$

\begin{Definition}
    Define
    \[\filt = \mathbb{C}\overline{Q}^\textbf{j}/I\] where $I$ is generated by the preprojective relations on $Q$ as well as
    \[\ell_i^x \ell_j^x, \, i \neq j,\, x \in [n],\]
    \[\sum_i \ell_i^x - e_x,\, x \in [n],\]
    \[\ell_j^{t(a)} a \ell_i^{s(a)},\, a \in \overline{Q}_1, j \geq i.\]
\end{Definition}

Note that this is a quotient of the algebra $\mauro{l_n}{\Pi(A_n)}.$ This means we can consider $\Mod \filt$ as a full subcategory of $\Mod \mauro{l_n}{\Pi(A_n)}$ and in turn a full subcategory of $\Mod \mathbb{C}\overline{Q}^{\textbf{j}}.$ 

\begin{Definition}
    Consider an object $M \in \Mod \filt$ of dimension vector $\textbf{m} \in \mathbb{N}^n.$ Via the embedding $\Mod \filt \subset \Mod \mathbb{C}\overline{Q}^{\textbf{j}}$ we can send the object $M$ to a point in $\mathrm{E}_\textbf{m}(\overline{Q}^{\textbf{j}}).$ We denote by $\mathrm{E}_\textbf{m}(\filt)$ the subspace of $\mathrm{E}_\textbf{m}(\overline{Q}^{\textbf{j}})$ consisting of all points arising from objects of $\Mod \filt$ through this process. 
\end{Definition}

We can think of $\mathrm{E}_\textbf{m}(\filt)$ as a representation variety for $\filt.$

\begin{Example}
    When $n = 2$ and $\textbf{j} = (1,2,1)$ the algebra $\filt$ is given by 
    \[\begin{tikzcd}[ampersand replacement=\&]
    {x} \& {y}
    \arrow[loop above, from=1-1, to=1-1, "{\ell_{1}^x}"]
    \arrow[loop below, from=1-1, to=1-1, "{\ell_{3}^x}"]
    \arrow[shift left = 1, from=1-1, to=1-2, "{a}"]
    \arrow[shift left = 1, from=1-2, to=1-1, "{b}"]
    \arrow[loop right, from=1-2, to=1-2, "{\ell_{2}^y}"]
    \end{tikzcd}\] and the relations \[ab - ba, \, \ell_2^y - e_y, \, \ell_1^x \ell_3^x, \, \ell_3^x \ell_1^x,\, \ell_1^x + \ell_3^x - e_x ,\, \ell^y_2 a \ell_1^x, \,  \ell_3^x b \ell_2^y.\]
\end{Example}

The functor $R: \Mod \mauro{l_n}{\Pi(A_n)} \rightarrow \Mod \Pi(A_n)$ induces a map $R_{\star}: \mathrm{E}_\textbf{m}(\filt) \rightarrow \Lambda_{\textbf{m}}.$ Given $\textbf{i} = (i_1,\cdots,i_t)\in [n]^t,$ $\textbf{a} = (a_1,\cdots,a_t) \in \mathbb{N}^t,$ and $M \in \Mod \Pi(A_n)$ denote by $\Phi_{\textbf{i}^{\textbf{a}},\textbf{m}}$ the variety of flags of submodules 
\[\{0 = M^0 \subseteq M^1\subseteq \cdots \subseteq M^t = M: M^j/M^{j-1} \cong S_{i_j}^{a_j}, \, M \in \Lambda_\textbf{m}\}.\] This slightly differs from the notation used in \cite{GLSr}.

We consider the commuting diagram 
\[\begin{tikzcd}[ampersand replacement=\&]
	{\mathrm{E}_\textbf{m}(\filt)} \&\& {\displaystyle\bigcup_{\textbf{a}} \Phi_{\textbf{j}^{\textbf{a}}, \textbf{m}}} \\
	\\
	\&\& {\Lambda_\textbf{m}}
	\arrow["{p_1}", from=1-1, to=1-3]
	\arrow["{R_\star}"', from=1-1, to=3-3]
	\arrow["{p_2}", from=1-3, to=3-3]
\end{tikzcd}\]

where $p_2$ is the natural projection onto $\Lambda_\textbf{m}$ and $p_1: \mathrm{E}_\textbf{m}(\filt) \rightarrow {\displaystyle\bigcup_{\textbf{a}} \Phi_{\textbf{j}^{\textbf{a}}, \textbf{m}}}$ sends $M$ to $R(M)$ along with the flag given by \[(M^i)_x = \mathrm{Im}(\sum_{j \leq i} m_j^x)\] at each vertex $x \in \{1, \dots, n\}.$  

\begin{Lemma} \label{idem grad}
    Given a vector space $V$ then a choice of idempotents $e_1, \dots, e_t$ with $e_ie_j = 0$ if $i \neq j$ and $\mathrm{Id}_V = \displaystyle\sum_{1 \leq i \leq t} e_i$ is equivalent a choice of grading $V = \displaystyle\bigoplus_{1 \leq i \leq t} V_i.$ 
\end{Lemma}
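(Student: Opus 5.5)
The plan is to construct explicit mutually inverse assignments between the two kinds of data. Given idempotents $e_1,\dots,e_t$ that are pairwise orthogonal and sum to $\mathrm{Id}_V$, we set $V_i \defeq \mathrm{Im}(e_i)$. Conversely, given a decomposition $V = \bigoplus_{1 \leq i \leq t} V_i$, we let $e_i$ be the projection onto $V_i$ along $\bigoplus_{j \neq i} V_j$. The proof then splits into three checks: the first assignment lands in gradings, the second lands in orthogonal idempotent decompositions of the identity, and the two are inverse to each other.

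For the first check, note that each $e_i$ is idempotent, as in the proof of Lemma \ref{module cat}: $e_i = e_i\,\mathrm{Id}_V = \sum_j e_ie_j = e_i^2$. Any $v \in V$ satisfies $v = \sum_i e_i v$, so the $V_i$ span $V$. For directness, suppose $\sum_i v_i = 0$ with $v_i = e_i w_i \in V_i$. Applying $e_k$ and using orthogonality gives $0 = e_k^2 w_k = v_k$. Hence $V = \bigoplus_i V_i$.

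For the second check, given the decomposition, the projections $e_i$ are linear and satisfy $e_ie_j = 0$ for $i \neq j$, since $e_j$ lands in $V_j \subseteq \ker e_i$. They also satisfy $\sum_i e_i = \mathrm{Id}_V$, because each $v$ decomposes uniquely as $\sum_i v_i$ with $v_i \in V_i$.

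For the third check, starting from a decomposition, the image of the projection onto $V_i$ is $V_i$, so we recover the decomposition. Starting from idempotents, we must show that $e_i$ coincides with the projection onto $\mathrm{Im}(e_i)$ along $\bigoplus_{j\neq i}\mathrm{Im}(e_j)$. The only real point to verify is that the kernel of $e_i$ is exactly $\bigoplus_{j \neq i} \mathrm{Im}(e_j)$. The inclusion $\supseteq$ follows from orthogonality. For $\subseteq$, if $e_i v = 0$ then $v = \sum_{j \neq i} e_j v$. Together with $e_i$ acting as the identity on its image, this shows that $e_i$ and the projection agree on every summand, so they are equal. This kernel identification is the step needing the most care, but it is immediate from $\sum_j e_j = \mathrm{Id}_V$.
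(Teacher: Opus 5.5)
Your proof is correct and follows essentially the same route as the paper. You use the same two assignments ($e_i \mapsto \mathrm{Im}(e_i)$, and $V_i \mapsto$ the projection onto $V_i$), and the same key point that an orthogonal idempotent decomposition of $\mathrm{Id}_V$ is determined by its images. The paper checks this algebraically: for $f_i$ with $\mathrm{Im}(f_i)=\mathrm{Im}(e_i)$ it gets $e_i = e_if_i$ and then $e_i - f_i = \sum_k e_k(e_i - f_i) = 0$. You instead check it summand by summand, and you also verify the directness of $\bigoplus_i \mathrm{Im}(e_i)$, which the paper leaves implicit.

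In your last step only the inclusion $\bigoplus_{j\neq i}\mathrm{Im}(e_j)\subseteq \ker e_i$ is used. The reverse inclusion, which you single out as the delicate point, is harmless but unnecessary.
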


\begin{proof}
    This is a standard result but we include it for completeness. Given such a choice of idempotents then defining $V_i = \mathrm{Im}(e_i)$ defines a grading on $V.$ Conversely given a grading $V = \oplus_i V_i$ we let $e_i$ be the idempotent projecting onto the component $V_i.$ To see that these are uniquely determined it suffices to show that for $(e_i)_{1 \leq i \leq t}$ and $(f_i)_{1 \leq i \leq t}$ then $e_i = f_i$ $\forall  \leq i \leq t$ if and only if $\mathrm{Im}(e_i) = \mathrm{Im}(f_i)$ $\forall 1 \leq i \leq t.$ Clearly if $e_i = f_i$ then $\mathrm{Im}(e_i) = \mathrm{Im}(f_i).$ Therefore suppose that $\mathrm{Im}(e_i) = \mathrm{Im}(f_i)$ for $1\leq i \leq t.$ It immediately follows that $e_jf_i = 0$ if $j \neq i.$ Therefore \[\begin{aligned} e_i &= e_i \mathrm{Id}_V \\ &= e_i \left(\displaystyle\sum_{1 \leq i \leq t} f_i \right) \\ &= e_if_i\end{aligned}\] so $e_if_i = e_i = e_i e_i.$ Now \[\begin{aligned} e_i - f_i &= \mathrm{Id}_V (e_i-f_i) \\ &= e_ie_i - e_if_i \\ &= 0. \end{aligned}\] This completes the proof.
\end{proof}

\begin{Lemma} \label{affine fibre}
    The fibres of $p_1$ are affine spaces with constant dimension over $\Phi_{\textbf{j}^{\textbf{a}}, \textbf{m}}$ for each $\textbf{a} \in \mathbb{N}^{l_n}$
\end{Lemma}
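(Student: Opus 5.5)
The plan is to identify the fibre of $p_1$ over a point $(N,\mathcal{F})\in\Phi_{\textbf{j}^{\textbf{a}},\textbf{m}}$ with the set of idempotent decompositions compatible with the flag, and to show this set is an affine space. Write the flag at vertex $x$ as $0=F^0_x\subseteq F^1_x\subseteq\cdots\subseteq F^{l_n}_x=N_x$, where $F^i_x=(N^i)_x$. Since $N^i/N^{i-1}\cong S_{\textbf{j}(i)}^{a_i}$, the flag at $x$ only jumps at indices $i\in\textbf{j}_x$, and the jump has dimension $a_i$. A point of the fibre is an object $M\in\Mod\filt$ with $R(M)=N$ and $\mathrm{Im}(\sum_{k\le i}m_k^x)=F^i_x$ for all $i,x$. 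By Lemma \ref{idem grad}, the idempotents $(m_i^x)_{i\in\textbf{j}_x}$ are the same data as a grading $N_x=\bigoplus_{i\in\textbf{j}_x}V_i^x$. The flag condition then says exactly that $V^x_i$ is a complement to $F^{i^-}_x$ inside $F^i_x$, where $i^-$ is the previous element of $\textbf{j}_x$ (or $0$).

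Next I check that every such choice of complements lies in $\mathrm{E}_\textbf{m}(\filt)$, and this is the key step. The preprojective relations hold because $R(M)=N\in\Lambda_\textbf{m}$. The relations $\ell_j^{t(a)}a\ell_i^{s(a)}=0$ for $j\ge i$ must follow from $N^i$ being submodules together with the specific quotients $S_{\textbf{j}(i)}$. For an arrow $a:x\to y$ with $x=\textbf{j}(i)$, the vector $N_a(V_i^x)$ lies in $N_a(F^i_x)\subseteq F^i_y$. Now $F^i_y=F^{i-1}_y$ because $\textbf{j}(i)\neq y$, and $F^{i-1}_y$ is the span of the $V^y_{k}$ with $k<i$. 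Hence $m_j^yN_am_i^x=0$ for all $j\ge i$, so the relations hold for every choice of complements. Conversely, any $M$ in the fibre gives such complements. So the fibre is the product over $x$ and $i\in\textbf{j}_x$ of the spaces of complements to $F^{i^-}_x$ in $F^i_x$.

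Finally, the space of complements to a subspace $A$ in $B$ is an affine space modelled on $\Hom(B/A,A)$: fixing one complement $C$, the others are the graphs of the linear maps $C\to A$. Its dimension is $a_i\cdot\dim F^{i^-}_x$. That dimension is determined by $\textbf{a}$ and $\textbf{j}$ alone, as a sum of the $a_k$ with $k<i$ and $\textbf{j}(k)=x$. So the total fibre dimension is
\[\sum_x\sum_{i\in\textbf{j}_x}a_i\sum_{k\in\textbf{j}_x,\,k<i}a_k,\]
which is constant over $\Phi_{\textbf{j}^{\textbf{a}},\textbf{m}}$.

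The main obstacle is the second step. One has to be careful that the ordering convention in the relation $j\ge i$ matches the flag orientation of $p_1$, and one has to use the fact that each subquotient is concentrated at the single vertex $\textbf{j}(i)$. If the convention came out reversed, I would swap to the opposite ordering of the flag.
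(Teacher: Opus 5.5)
Your proposal is correct and follows essentially the same route as the paper: by Lemma \ref{idem grad}, the fibre over a flag is the space of gradings compatible with it, i.e.\ a product of spaces of splittings of $F^{i^-}_x \subseteq F^i_x$, each an affine space modelled on $\mathrm{Hom}(F^i_x/F^{i^-}_x, F^{i^-}_x)$, with total dimension determined by $\textbf{a}$. Your extra check that every compatible grading automatically satisfies the relations $\ell_j^{t(a)}a\ell_i^{s(a)}=0$ for $j\ge i$ (using that $F^i/F^{i-1}$ is concentrated at $\textbf{j}(i)$) is something the paper leaves implicit, and it already shows the ordering conventions match, so your closing hedge about reversing the flag is unnecessary.
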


\begin{proof}
    It is a standard homological fact that given an exact sequence of vector spaces \[U \hookrightarrow V \twoheadrightarrow V/U\] then the space of splittings of this exact sequence can be identified with $\Hom_{\mathbb{C}}(V/U, U).$ This follows from applying $\Hom_{\mathbb{C}}(-,U)$ to the exact sequence to get \[\Hom_{\mathbb{C}}(V/U,U) \hookrightarrow \Hom_{\mathbb{C}}(V,U) \twoheadrightarrow \Hom_{\mathbb{C}}(U,U).\] The space of morphisms in $\Hom_{\mathbb{C}}(V,U)$ which get mapped to the identity in $\Hom_{\mathbb{C}}(U,U)$ can then be identified with $\Hom_{\mathbb{C}}(V/U,U).$ This can then be inductively extended to show that the space of gradings on a vector space which induce a filtration $V^1 \subset V^2 \subset \cdots \subset V^r = V$ can be identified with \[\displaystyle\bigoplus_{j}\Hom_{\mathbb{C}}(V^j/V^{j-1}, V^{j-1}).\] Note that all of these identifications are not canonical as one needs to choose a basepoint.
    For $N \in \filt$ the choice of idempotents at vertex $x \in Q_0$ uniquely determines a grading of the vector space $N_x$ as described above in Lemma \ref{idem grad}. Under this identification we can then apply the standard grading result to identify the fibre $p_1^{-1}(\underline{F})$ for $\underline{F} = (0 = M^0 \subseteq M^1\subseteq \cdots \subseteq M^t = M, M) \in \Phi_{\textbf{i}^{\textbf{a}},\textbf{m}}$ with \[\displaystyle\prod_{x \in Q_0} \displaystyle\bigoplus_{j} \Hom_{\mathbb{C}}((M^j/M^{j-1})_x, (M^{j-1})_x).\] The dimension of these affine spaces are fully determined by the dimensions of the $M^i/M^{i-1}$ which are determined by $\textbf{a}.$ 
\end{proof}

Since all of the fibres are connected then the connected components of ${\mathrm{E}_\textbf{m}(\filt)}$ coincide with those of ${\displaystyle\bigcup_{\textbf{a}} \Phi_{\textbf{j}^{\textbf{a}}, \textbf{m}}}.$ 
Each $\Lambda_\textbf{m}$ is connected since the semisimple module of that dimension vector is in the closure of any module of that dimension vector. Since $\mathrm{G}_\textbf{m}$ has a transitive action on the space of flags of type $\mathbf{a}$ this implies that each $\Phi_{\textbf{j}^{\textbf{a}}, \textbf{m}}$ is connected. Therefore the connected components of ${\mathrm{E}_\textbf{m}(\filt)}$ can be labeled by $\textbf{a} \in \mathbb{N}^{l_n}.$ Given C, a connected component of ${\mathrm{E}_\textbf{m}(\filt)},$ we denote by $r(C) \in \mathbb{N}^{l_n}$ the associated label of the component $\Phi_{\textbf{j}^{r(C)}, \textbf{m}}.$

Given a subset $A \subset X$ we denote by $\mathds{1}_A : X \rightarrow \mathbb{C}$ the function defined by $\mathds{1}_A(x) = 1$ if $x \in A$ and $0$ otherwise. We refer to them as characteristic functions. A constructible function is a finite sum $g = \sum_i a_i \mathds{1}_{Z_i},$ $a_i \in \mathbb{C}$ where $Z_i$ is locally closed for each $i.$ Let $\widetilde{\mathcal{H}}_\textbf{m}$ denote the space of $\mathrm{G}_\textbf{m}$-equivariant constructible functions on $\mathrm{E}_\textbf{m}(\filt).$ Let $\widetilde{\mathcal{M}}_\textbf{m}$ denote the space of $\mathrm{G}_\textbf{m}$-equivariant constructible functions on $\Lambda_\textbf{m}.$ For a constructible function, $g = \sum_i a_i \mathds{1}_{Z_i}$ define \[\int_A g \, d\chi = \sum_i a_i \chi(A \cap Z_i).\] We define the following map 
\[\xi:  \widetilde{\mathcal{H}}_\textbf{m} \rightarrow \widetilde{\mathcal{M}}_\textbf{m}\] by \[\xi(f)(M) = \displaystyle\int_{R_{\star}^{-1}(M)} f \, d\chi.\] 

There is a product on $\oplus_{\textbf{m}} \widetilde{\mathcal{M}}_\textbf{m}$ given by $(f \star g)(M) = \int_{U \subseteq M} f(U)g(M/U) \, d\chi.$ The algebra $\mathcal{M} \subseteq \oplus_{\textbf{m}} \widetilde{\mathcal{M}}_\textbf{m}$ is the subalgebra generated by the characteristic functions on the representation varieties associated to the one-dimensional representations at vertex $i.$ We denote them by $\mathds{1}_i,\,i \in \{1,\dots,n\}.$ Let $\mathcal{M}_{\textbf{m}} = \mathcal{M} \cap \widetilde{\mathcal{M}_\textbf{m}}.$

We consider $\mathcal{H}_\textbf{m},$ the subspace of $\widetilde{\mathcal{H}}_\textbf{m}$ spanned by the characteristic functions on connected components. This is the space of functions of the form 
\[\mathcal{H}_\textbf{m} = \{f \in \widetilde{\mathcal{H}}_\textbf{m}(A): f = \sum_i a_i \mathds{1}_{C_i},\, C_i \in \widetilde{\mathrm{C}}_\textbf{m}, a_i \in \mathbb{C}\}\] where $\widetilde{\mathrm{C}}_\textbf{m}$ denotes the set of connected components of $\mathrm{E}_\textbf{m}(\filt).$ 

For $\mathbf{a} \in\mathbb{N}^t$ and $\textbf{j}^{\textbf{a}}$ denote by $\mathds{1}_{\textbf{j}^{\textbf{a}}} \in \mathcal{M}$ the function \[\mathds{1}_{\textbf{j}^{\textbf{a}}} = \underbrace{\mathds{1}_{j_1}\star \dotsb \star \mathds{1}_{j_1}}_{a_1 - \text{times}}  \star \dotsb \star \overbrace{\mathds{1}_{j_{l_n}} \star \dotsb \star \mathds{1}_{j_{l_n}}}^{a_{l_n}-\text{times}}.\]
\begin{Lemma} \label{pushforward}
    Given a connected component $C$ of $\mathrm{E}_\textbf{m}(\filt)$ then \[(\prod_i a_i!)\xi(\mathds{1}_C) = \mathds{1}_{\textbf{j}^{\textbf{a}}}\] where $\textbf{a} = r(C).$ 
\end{Lemma}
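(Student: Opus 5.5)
We compute both sides pointwise at a module $M \in \Lambda_\textbf{m}$ and reduce the comparison to an Euler characteristic count of flags. For the left hand side, $\xi(\mathds{1}_C)(M) = \chi(R_\star^{-1}(M) \cap C)$. The diagram $R_\star = p_2 \circ p_1$ lets us factor this computation through $p_1$. By the labelling of connected components, $C = p_1^{-1}(\Phi_{\textbf{j}^{\textbf{a}},\textbf{m}})$ with $\textbf{a} = r(C)$. Hence $R_\star^{-1}(M)\cap C = p_1^{-1}\bigl(p_2^{-1}(M)\cap \Phi_{\textbf{j}^{\textbf{a}},\textbf{m}}\bigr)$. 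By Lemma \ref{affine fibre}, $p_1$ restricted to this locus has fibres that are affine spaces of constant dimension. We want $p_1$ to be a Zariski locally trivial fibration, or at least a map whose fibres all have Euler characteristic $1$. Then the fibration property of $\chi$ for constructible maps gives
\[\xi(\mathds{1}_C)(M) = \chi\bigl(p_2^{-1}(M)\cap \Phi_{\textbf{j}^{\textbf{a}},\textbf{m}}\bigr),\]
which is the Euler characteristic of the variety $\mathcal{F}_{\textbf{j},\textbf{a}}(M)$ of flags $0 = M^0\subseteq \cdots \subseteq M^{l_n} = M$ of submodules with $M^k/M^{k-1}\cong S_{j_k}^{a_k}$. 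Along the way we must check that $p_1$ is surjective onto $\Phi_{\textbf{j}^{\textbf{a}},\textbf{m}}$. Given such a flag, we choose at each vertex $x$ a splitting of the filtration and let $\ell_k^x$ project onto the $k$-th graded piece, using Lemma \ref{idem grad}. The relations $\ell_j^{t(a)} a \ell_i^{s(a)} = 0$ for $j\ge i$ then say exactly that each arrow strictly lowers the filtration index. This is the submodule condition together with the requirement that the subquotients are semisimple.

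For the right hand side, we unwind the convolution product inductively. A direct induction on the number of factors shows that $(\mathds{1}_{i_1}\star\cdots\star\mathds{1}_{i_s})(M)$ equals $\chi$ of the variety of complete flags of submodules of $M$ whose successive one-dimensional subquotients are $S_{i_1},\dots,S_{i_s}$, read from the appropriate end. The inductive step uses $(f\star g)(M)=\int_{U} f(U)g(M/U)\,d\chi$ and the fibration property once more. Grouping the factors into the blocks of $\textbf{j}^{\textbf{a}}$, we get $\mathds{1}_{\textbf{j}^{\textbf{a}}}(M) = \chi(\widetilde{\mathcal{F}})$, where $\widetilde{\mathcal{F}}$ is the space of complete flags refining a flag in $\mathcal{F}_{\textbf{j},\textbf{a}}(M)$. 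This holds because the complete flags of this type determine their coarsening uniquely, as the partial flag of the steps where the block index changes. Consecutive blocks have different vertex labels in a reduced word, so $j_k\ne j_{k+1}$, and the coarsening is recovered from the labels.

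The forgetful map $\widetilde{\mathcal{F}}\to\mathcal{F}_{\textbf{j},\textbf{a}}(M)$ has fibre over a given flag equal to the product over $k$ of the complete flag varieties of the semisimple subquotient $S_{j_k}^{a_k}$. Every subspace of $S_{j_k}^{a_k}$ is a submodule, so this fibre is $\prod_k \mathrm{Fl}(\mathbb{C}^{a_k})$, which has Euler characteristic $\prod_k a_k!$. Applying the fibration property a third time gives
\[\mathds{1}_{\textbf{j}^{\textbf{a}}}(M) = \Bigl(\prod_k a_k!\Bigr)\chi(\mathcal{F}_{\textbf{j},\textbf{a}}(M)) = \Bigl(\prod_k a_k!\Bigr)\xi(\mathds{1}_C)(M),\]
which is the claimed identity.

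The main obstacle is making the Euler characteristic fibration arguments rigorous, especially for $p_1$. We need the identification of fibres with $\prod_x\bigoplus_j\Hom((M^j/M^{j-1})_x,(M^{j-1})_x)$ to vary algebraically, so that $p_1$ is piecewise trivial with fibres $\mathbb{C}^N$, each of Euler characteristic $1$. The plan is to stratify $\Phi_{\textbf{j}^{\textbf{a}},\textbf{m}}$ into $\mathrm{G}_\textbf{m}$-orbits of flags of vector spaces, on which the torsor of splittings is trivialised by choosing a basepoint. Alternatively, we can use the standard fact that $\chi(X)=\chi(Y)$ whenever $X\to Y$ is a constructible map all of whose fibres have $\chi=1$.
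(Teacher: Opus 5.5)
Your proposal is correct and follows the paper's route. You evaluate at $M$, use Lemma \ref{affine fibre} to replace $\chi(R_\star^{-1}(M)\cap C)$ by $\chi(p_2^{-1}(M)\cap\Phi_{\textbf{j}^{\textbf{a}},\textbf{m}})$, and obtain the factor $\prod_i a_i!$ by refining to complete flags, which the paper compresses into ``$\chi(\mathbb{P}^t)=t+1$''; you also supply the surjectivity of $p_1$ and the convolution unwinding that the paper leaves implicit.

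One small correction concerns the coarsening step. The coarsening of a complete flag is determined by the fixed positions $a_1+\dots+a_k$, not by the labels. If some $a_k=0$, consecutive nonempty blocks can share a label, e.g.\ $\textbf{j}=(1,2,1)$, $\textbf{a}=(1,0,1)$. The coarsened subquotients are $\cong S_{j_k}^{a_k}$ simply because a module supported at a single vertex of the loop-free quiver $\overline{Q}$ is semisimple.
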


\begin{proof}
    Given $M \in \Lambda_{\textbf{m}}$ then 
    \[\begin{aligned}
        \xi(\mathds{1}_C)(M) &= \chi(\{N \in C: R(N) = M\}) \\ &= \chi(R_\star^{-1}(M) \cap C) \\ &= \chi(p_1^{-1}p_2^{-1}(M) \cap C) \\ &= \chi(p_2^{-1}(M)\cap \Phi_{\textbf{j}^{\textbf{a}}, \textbf{m}})
    \end{aligned}\] where ${\textbf{a}} = r(C).$
    In the last step we have used Lemma \ref{affine fibre}. Since $\chi(\mathbb{P}^t) = (t+1)$ we see that $(\prod_i a_i!)\chi(p_2^{-1}(M)\cap \Phi_{\textbf{j}^{\textbf{a}}, \textbf{m}}) = \mathds{1}_{\textbf{j}^\textbf{a}}(M).$     
\end{proof}

\begin{Proposition}
    $\xi(\mathcal{H}_\textbf{m}) = \mathcal{M}_\textbf{m}.$
\end{Proposition}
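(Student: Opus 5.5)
The plan is to prove the two inclusions separately, using Lemma \ref{pushforward} as the bridge. $\mathcal{H}_\textbf{m}$ is spanned by the characteristic functions $\mathds{1}_C$ of connected components, and these are labelled by $r(C) \in \mathbb{N}^{l_n}$. So Lemma \ref{pushforward} gives
\[\xi(\mathcal{H}_\textbf{m}) = \spn_{\mathbb{C}}\{\mathds{1}_{\textbf{j}^{\textbf{a}}} : \textbf{a} \in \mathbb{N}^{l_n},\ \Phi_{\textbf{j}^{\textbf{a}},\textbf{m}} \neq \emptyset\},\]
since the factor $\prod_i a_i!$ is a non-zero scalar. Each $\mathds{1}_{\textbf{j}^{\textbf{a}}}$ is a $\star$-product of the generators $\mathds{1}_i$, so it lies in $\mathcal{M}$. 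Its dimension vector is $\sum_k a_k \alpha_{\textbf{j}(k)}$, which must equal $\textbf{m}$ for the flag variety to be non-empty; hence $\mathds{1}_{\textbf{j}^{\textbf{a}}} \in \mathcal{M}_\textbf{m}$. One should also note that $\mathds{1}_{\textbf{j}^{\textbf{a}}}$ is $\mathrm{G}_\textbf{m}$-invariant, and that the components with empty flag variety do not occur. This gives $\xi(\mathcal{H}_\textbf{m}) \subseteq \mathcal{M}_\textbf{m}$.

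For the reverse inclusion I must show that $\mathcal{M}_\textbf{m}$ is spanned by the functions $\mathds{1}_{\textbf{j}^{\textbf{a}}}$ for the fixed reduced word $\textbf{j}$ of the longest element. By definition, $\mathcal{M}_\textbf{m}$ is spanned by the products $\mathds{1}_{i_1} \star \dotsb \star \mathds{1}_{i_s}$ with $\sum_k \alpha_{i_k} = \textbf{m}$. The key input is Lusztig's result, as used by Geiss, Leclerc and Schr\"oer (\cite{GLSsemi}, \cite{GLSr}): $\mathcal{M}$ is isomorphic to $U(\mathfrak{n})$, with $\mathds{1}_i$ corresponding to the Chevalley generator $e_i$. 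Under this isomorphism, the elements $\mathds{1}_{\textbf{j}^{\textbf{a}}}$ correspond to the ordered monomials $e_{j_1}^{a_1} \cdots e_{j_{l_n}}^{a_{l_n}}$ (up to the factorial normalisation).

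The main obstacle is showing that these ordered monomials span $U(\mathfrak{n})_\textbf{m}$. The natural tool is the PBW basis attached to $\textbf{j}$, which is spanned by ordered products of root vectors $E_{\beta_1}^{c_1} \cdots E_{\beta_{l_n}}^{c_{l_n}}$. These are not literally monomials in the $e_{j_k}$, so I would instead try the following. In type $A_n$, every $\Lambda_\textbf{m}$-orbit-closure-type function can be reached: for $M \in \Lambda_\textbf{m}$, choose a flag of type $\textbf{j}^{\textbf{a}}$ adapted to $M$. Such a flag exists because the reduced word of $w_0$ gives a composition series "by layers", in the sense of \cite{GLSr}, where $\mathcal{M}$ is shown to be spanned by the $\mathds{1}_{\textbf{j}^{\textbf{a}}}$ via the $\delta$-functions of the dual semicanonical basis. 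So the step I would actually rely on is the spanning statement from \cite{GLSr}: for a reduced expression $\textbf{j}$ of $w_0$, the functions $\mathds{1}_{\textbf{j}^{\textbf{a}}}$ with $\textbf{a} \in \mathbb{N}^{l_n}$ span $\mathcal{M}$. I would try to verify it directly by a triangularity argument. Order the modules by the generic flag type and show that $\mathds{1}_{\textbf{j}^{\textbf{a}}}$ is non-zero on a module whose "$\textbf{j}$-type" is $\textbf{a}$ and vanishes on modules of larger type. Then a dimension count against $\dim U(\mathfrak{n})_\textbf{m}$ (the number of Kostant partitions of $\textbf{m}$) finishes the argument.

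Combining the two inclusions gives $\xi(\mathcal{H}_\textbf{m}) = \mathcal{M}_\textbf{m}$.
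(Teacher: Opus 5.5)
Your proposal is correct and is essentially the paper's argument: the inclusion $\xi(\mathcal{H}_\textbf{m}) \subseteq \mathcal{M}_\textbf{m}$ comes from Lemma \ref{pushforward}, and the reverse inclusion rests on the Geiss--Leclerc--Schr\"oer fact that, for a reduced word $\textbf{j}$ of $w_0$, the functions $\mathds{1}_{\textbf{j}^{\textbf{a}}}$ span $\mathcal{M}$. The paper states this step dually: a functional on $\mathcal{M}$, tested on the evaluations $\delta_M$, that kills every $\mathds{1}_{\textbf{j}^{\textbf{a}}}$ must vanish, by \cite[Lemma 9.1]{GLSr} and the remarks after it (which rest on the density of the elements $x_{j_1}(y_1)\cdots x_{j_{l_n}}(y_{l_n})$ in $N_{-}$); so your triangularity sketch is not needed, and as written it would still need a way, such as string parametrisations of the irreducible components, to single out $\dim \mathcal{M}_\textbf{m}$ of the generally more numerous words $\textbf{a}$.
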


\begin{proof}
    By the above lemma we know that $\xi(\mathcal{H}_\textbf{m}) \subseteq \mathcal{M}_\textbf{m}.$ We must now show that any function in $\mathcal{M}_\textbf{m}$ is in the span of the $\mathds{1}_{\textbf{j}^\textbf{a}}$ where $\textbf{j}$ is a fixed reduced expression for the longest element in $S_{n+1}.$ Let $\delta_{\textbf{j}^\textbf{a}} \in \mathcal{M}^{*}$ denote the dual element to $\mathds{1}_{\textbf{j}^\textbf{a}} \in \mathcal{M}.$ Suppose $f \in \mathcal{M}$ is not in the span of the $\mathds{1}_{\textbf{j}^\textbf{a}}.$ Then $g = f - \sum_{\textbf{a}} \delta_{\textbf{j}^\textbf{a}}(f)\mathds{1}_{\textbf{j}^\textbf{a}} \in \mathcal{M}$ is non-zero. For $\tau \in \mathcal{M}^{*}$ dual to $g$ then $\tau \neq 0$ but $\tau(\mathds{1}_{\textbf{j}^{\textbf{a}}}) = 0 ,\, \forall \textbf{a} \in \mathbb{N}^{l_n}.$ Therefore it suffices to show that for any $\tau \in \mathcal{M}^{*}$ if \[\tau(\mathds{1}_{\textbf{j}^{\textbf{a}}}) = 0 ,\, \forall \textbf{a} \in \mathbb{N}^{l_n}\] then $\tau = 0.$ We know by \cite{GLSsemi} that $\mathcal{M}^{*}$ is spanned by the functions $\delta_M,\,M \in \Mod \Pi(A_n)$ where $\delta_M(f) = f(M).$ By \cite[Lemma 9.1]{GLSr} and the comments following it, if \[\delta_M(\mathds{1}_{\textbf{j}^{\textbf{a}}}) = 0 ,\, \forall \textbf{a} \in \mathbb{N}^{l_n}\] then $\delta_M = 0.$
\end{proof}

Given $y \in \mathbb{C}$ let $x_i(y) = \mathrm{Id} + yE_{i,i+1}\in N_{-}$ for $N_{-}$ the big affine cell in the type $A_n$ flag variety (see for example \cite{GLSsemi}). The set \[\{x_{j_1}(y_1)\cdots x_{j_{l_n}}(y_{l_n}): (y_1,\dotsc,y_{l_n}) \in \mathbb{C}^{l_n}\}\] is dense in $N_{-}$ when $\textbf{j} = j_1\cdots j_{l_n}$ is a reduced expression for $w_0.$ In \cite{GLSr}, Geiss, Leclerc, and Schr\"oer introduced a function $\varphi:\Mod\Pi(A_n) \rightarrow\mathbb{C}[N_-].$ For $\textbf{a} \in \mathbb{N}^{l_n}$ we denote by $y^{\textbf{a}} = \prod_{i=1}^{l_n} y_{i}^{a_t} \in \mathbb{C}$ where $(y_1,\dotsc,y_{l_n}) \in \mathbb{C}^{l_n}.$ Let $\widetilde{C}$ be the set of connected components of $\bigcup_{\textbf{m} \in \mathbb{N}^{n}} \mathrm{E}_\textbf{m}(\filt).$ 

\begin{Proposition}
    \[\varphi(x_{j_1}(y_1)\cdots x_{j_{l_n}}(y_{l_n})) = \sum_{C \in \widetilde{C}} y^{r(C)}\xi(\mathds{1}_C), \, \forall y = (y_1,\cdots,y_{l_n}) \in \mathbb{C}^{l_n}\] considered as functions on $\Mod \Pi(A_n).$
\end{Proposition}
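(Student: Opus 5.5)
The plan is to evaluate both sides at an arbitrary $M \in \Lambda_\textbf{m}$ and compare them coefficient by coefficient in the monomials $y^{\textbf{a}}$.

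\emph{Left-hand side.} The input is the Geiss--Leclerc--Schr\"oer formula from \cite{GLSr} for the value of $\varphi_M$ on a product of one-parameter subgroups: for $\textbf{j}$ a reduced expression of $w_0$,
\[\varphi_M(x_{j_1}(y_1)\cdots x_{j_{l_n}}(y_{l_n})) = \sum_{\textbf{a} \in \mathbb{N}^{l_n}} \frac{y^{\textbf{a}}}{\textbf{a}!}\,\chi\bigl(\Phi_{\textbf{j}^{\textbf{a}},M}\bigr),\]
where $\textbf{a}! = \prod_i a_i!$ and $\Phi_{\textbf{j}^{\textbf{a}},M} = p_2^{-1}(M) \cap \Phi_{\textbf{j}^{\textbf{a}},\textbf{m}}$ is the variety of flags of $M$ of type $\textbf{j}^{\textbf{a}}$. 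Equivalently, the coefficient of $y^{\textbf{a}}$ is $\mathds{1}_{\textbf{j}^{\textbf{a}}}(M)/\textbf{a}!$, since $\mathds{1}_{\textbf{j}^{\textbf{a}}}(M)$ counts composition flags with simple quotients and each block of $a_i$ copies of $S_{j_i}$ contributes $a_i!$ via the Euler characteristic of the complete flag variety. I would quote this from \cite{GLSr}; its derivation from $x_i(y) = \exp(y e_i)$ and the $\star$-product is standard.

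\emph{Right-hand side.} The components $C \in \widetilde{C}$ are labelled bijectively by pairs $(\textbf{m},\textbf{a})$ through $r(C)$, as shown before Lemma \ref{pushforward}. So the sum over $\widetilde{C}$, evaluated at $M \in \Lambda_\textbf{m}$, reduces to a sum over $\textbf{a}$. Components with a different dimension vector give zero, since $\xi(\mathds{1}_C)$ is supported on $\Lambda_{\textbf{m}'}$. For the remaining terms, Lemma \ref{pushforward} together with its proof (the affine fibres of Lemma \ref{affine fibre} have $\chi = 1$) gives
\[\xi(\mathds{1}_C)(M) = \chi\bigl(p_2^{-1}(M) \cap \Phi_{\textbf{j}^{\textbf{a}},\textbf{m}}\bigr) = \mathds{1}_{\textbf{j}^{\textbf{a}}}(M)/\textbf{a}!.\]
Comparing coefficients of $y^{\textbf{a}}$ then yields the identity. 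Finiteness of the sum is automatic, because only $\textbf{a}$ with $\sum_{i \in \textbf{j}_x} a_i = m_x$ have nonempty flag varieties.

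\emph{Main obstacle.} The hard part is matching normalisations and conventions with \cite{GLSr}. Specifically, I must check that their $\varphi$ uses exactly the factor $1/\textbf{a}!$, as opposed to the $\star$-product with unnormalised $\mathds{1}_{\textbf{j}^{\textbf{a}}}$. I must also check the order of the factors $x_{j_1}\cdots x_{j_{l_n}}$ against the flag ordering $M^1 \subseteq \cdots$, i.e. whether submodules or quotients come first; this ordering is fixed by the relation $\ell_j^{t(a)} a \ell_i^{s(a)} = 0$ for $j \ge i$ in $\filt$. If the orders are reversed, I would replace $\textbf{j}$ by its reverse, which is again reduced for $w_0$, and adjust $p_1$ accordingly. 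Beyond that, one should verify that the identity $\chi(p_2^{-1}(M)\cap\Phi_{\textbf{j}^{\textbf{a}},\textbf{m}}) = \textbf{a}!^{-1}\mathds{1}_{\textbf{j}^{\textbf{a}}}(M)$ used in Lemma \ref{pushforward} holds. It follows by fibring the full composition-flag variety over the coarse flag variety, with fibres products of complete flag varieties of Euler characteristic $\prod_i a_i!$.
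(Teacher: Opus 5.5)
Your route is the paper's: evaluate at $M\in\Lambda_{\textbf{m}}$, expand the left side with \cite[Lemma 9.1]{GLSr}, evaluate the right side with Lemma \ref{pushforward}, and compare coefficients of $y^{\textbf{a}}$. The problem is the normalisation in your first display: it is off by a factor $\textbf{a}!=\prod_i a_i!$, and the two forms you call equivalent are not. You set $\Phi_{\textbf{j}^{\textbf{a}},M}=p_2^{-1}(M)\cap\Phi_{\textbf{j}^{\textbf{a}},\textbf{m}}$. This is the coarse flag variety, with subquotients $S_{j_i}^{a_i}$. Your own fibration argument gives $\mathds{1}_{\textbf{j}^{\textbf{a}}}(M)=\textbf{a}!\,\chi(\Phi_{\textbf{j}^{\textbf{a}},M})$. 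Hence $\mathds{1}_{\textbf{j}^{\textbf{a}}}(M)/\textbf{a}!=\chi(\Phi_{\textbf{j}^{\textbf{a}},M})$, not $\chi(\Phi_{\textbf{j}^{\textbf{a}},M})/\textbf{a}!$. The correct expansion is
\[
\varphi_M\bigl(x_{j_1}(y_1)\cdots x_{j_{l_n}}(y_{l_n})\bigr)=\sum_{\textbf{a}}\frac{y^{\textbf{a}}}{\textbf{a}!}\,\mathds{1}_{\textbf{j}^{\textbf{a}}}(M)=\sum_{\textbf{a}}y^{\textbf{a}}\,\chi\bigl(p_2^{-1}(M)\cap\Phi_{\textbf{j}^{\textbf{a}},\textbf{m}}\bigr).
\]
The $1/\textbf{a}!$ from the Taylor expansion of the exponentials pairs with the unnormalised product $\mathds{1}_{\textbf{j}^{\textbf{a}}}$. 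Equivalently, it pairs with the variety of complete composition flags of type $(j_1^{a_1},\dots,j_{l_n}^{a_{l_n}})$. That is the flag variety for which the factorial form of the GLS formula holds, not the coarse one.

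Taken literally, your first display together with your (correct) evaluation $\xi(\mathds{1}_C)(M)=\chi\bigl(p_2^{-1}(M)\cap\Phi_{\textbf{j}^{r(C)},\textbf{m}}\bigr)$ would make the coefficients of $y^{\textbf{a}}$ disagree by $\textbf{a}!$. A concrete check: take type $A_2$, $\textbf{j}=(1,2,1)$ and $M=S_1\oplus S_1$. The coefficient of $y_1^2$ in $\varphi_M$ is $\frac{1}{2}(\mathds{1}_1\star\mathds{1}_1)(M)=\frac{1}{2}\chi(\mathbb{P}^1)=1$, consistent with $\varphi_M=(y_1+y_3)^2$. The coarse flag variety for $\textbf{a}=(2,0,0)$ is a point, so your first display would give $1/2$. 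If you use your ``Equivalently'' form, which is the right one, the coefficient comparison goes through and your proof coincides with the paper's.
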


\begin{proof}
    This follows from Lemma \ref{pushforward} and \cite[Lemma 9.1]{GLSr}.
\end{proof}

Notice that we have constructed a surjection of graded vector spaces \[\xi: \bigoplus_\textbf{m} \mathcal{H}_\textbf{m} \twoheadrightarrow \bigoplus_\textbf{m} \mathcal{M}_\textbf{m} = \mathcal{M} \cong U(\mathfrak{n}_+).\] The final isomorphism is coming from \cite{Lus2}. Both the left and right hand side have bases indexed by elements of $\mathbb{N}^{l_n}$ coming from $r(C)$ on the left and the PBW basis on the right. This map is however not injective. For example there are many functions which will map to $\mathds{1}_i$ when the index $i$ occurs multiple times in $\textbf{j}.$

\begin{Remark}
    We note that the surjection $\xi: \mathcal{H} \twoheadrightarrow \mathcal{M}$ is not an algebra morphism. We can give $\widetilde{\mathcal{H}}$ the structure of an algebra by defining \[(f \star g)(M) = \int_{U \subseteq M} f(U)g(M/U) \, d\chi\] as before, however $\mathcal{H}$ is not closed under this product. In particular, we can easily observe that \[\operatorname{supp}(\mathds{1}_{C_1} \star \mathds{1}_{C_2}) \subseteq C_3\] where $C_3$ is the unique connected component of $\mathrm{E}(\filt)$ such that $r(C_3) = r(C_1) + r(C_2)$ but typically $\mathds{1}_{C_1} \star \mathds{1}_{C_2}$ is not a multiple of $\mathds{1}_{C_3}.$ 
\end{Remark}

\begin{Example}
    Consider 
    \[\begin{tikzcd}
	1 & 2
	\arrow["a", shift left, from=1-1, to=1-2]
	\arrow["b", shift left, from=1-2, to=1-1]
    \end{tikzcd}\] with $ab = ba = 0.$ This is the type A$_2$ preprojective algebra. There are 4 indecomposable representations of $\Pi(A_2)$ given by the two simple representations $S_1,S_2$ and the two projective-injectives $P_1,P_2.$ Let $\textbf{j} = (1,2,1).$ This is a reduced expression for the longest element in $S_3.$ Consider $\textbf{m} = (1,2).$ The variety $\mathrm{E}_\textbf{m}(\filt)$ has two connected components given by $r(C_1) = (1,2,0)$ and $r(C_2) = (0,2,1).$ The representation variety $\Lambda_\textbf{m}$ of the preprojective algebra has two irreducible components. One component $Z_1 \subseteq \Lambda_\textbf{m}$ where $Z_1 = \overline{\mathrm{G}_\textbf{m} \cdot (P_1\oplus S_2)}.$ The second component $Z_2 \subseteq \Lambda_\textbf{m}$ where $Z_2 = \overline{\mathrm{G}_\textbf{m} \cdot (P_2\oplus S_2)}.$ It is easy to compute $\xi(\mathds{1}_{C_1}) = \mathds{1}_{Z_1}$ and $\xi(\mathds{1}_{C_2}) = \mathds{1}_{Z_2}.$ 

    Conversely, given $\textbf{m} = (2,1)$ we have three components of $\mathrm{E}_\textbf{m}(\filt).$ These are given by $r(C_1) = (2,1,0),$ $r(C_2) = (0,1,2),$ and $r(C_3) = (1,1,1).$ Once again we have two components of $\Lambda_\textbf{m}$ given by $Z_1 = \overline{\mathrm{G}_\textbf{m} \cdot (P_1\oplus S_1)}$ and $Z_2 = \overline{\mathrm{G}_\textbf{m} \cdot (P_2\oplus S_1)}.$ We compute $\xi(\mathds{1}_{C_1}) = \mathds{1}_{Z_1},$ $\xi(\mathds{1}_{C_2}) = \mathds{1}_{Z_2},$ and $\xi(\mathds{1}_{C_3}) = \mathds{1}_{\Lambda_\textbf{m}} + \mathds{1}_{(0,0)} = \mathds{1}_{Z_1} + \mathds{1}_{Z_2}.$ In particular, we see that for $\textbf{m} = (1,2),$ the map \[\xi : \mathcal{H}_\textbf{m} \rightarrow \mathcal{M}_\textbf{m}\] is an isomorphism. When $\textbf{m} = (2,1),$ the kernel of the surjection \[\xi : \mathcal{H}_\textbf{m} \rightarrow \mathcal{M}_\textbf{m}\] is generated by the relation $\mathds{1}_{C_1} - \mathds{1}_{C_3} + \mathds{1}_{C_2}$ for $r(C_1) = (2,1,0),$ $r(C_2) = (0,1,2),$ and $r(C_3) = (1,1,1).$ 

    The indecomposable representations of $\Pi(A_2)$ are of the form $S_1, S_2, Q_1,$ and $Q_2.$ For $\textbf{m} = (1,0),$ $\mathrm{E}_\textbf{m}(\filt) = \mathbb{A}^0 \cup \mathbb{A}^0$ with the two components labelled by $r(C_1) = (1,0,0)$ and $r(C_2) = (0,0,1).$ For $\textbf{m} = (0,1),$ $\mathrm{E}_\textbf{m}(\filt) = \mathbb{A}^0$ with the component labelled by $r(C_3) = (0,1,0).$ For $\textbf{m} = (1,1),$ $\mathrm{E}_\textbf{m}(\filt) = \mathbb{A}^1 \cup \mathbb{A}^1$ with the two components labelled by $r(C_4) = (1,1,0)$ and $r(C_5) = (0,1,1).$  A simple computation gives 
    \[\sum_{C \in \tilde{C}} y^{r(C)}\xi(\mathds{1}_C)(S_1) = y_1+y_3,\]
    \[\sum_{C \in \tilde{C}} y^{r(C)}\xi(\mathds{1}_C)(S_2) = y_2,\]
    \[\sum_{C \in \tilde{C}} y^{r(C)}\xi(\mathds{1}_C)(Q_1) = y_1y_2,\]
    \[\sum_{C \in \tilde{C}} y^{r(C)}\xi(\mathds{1}_C)(Q_2) = y_2y_3.\] In particular, we recover the fact from \cite{GLSmult} that $\varphi_{S_1}\varphi_{S_2} = \varphi_{Q_1}+\varphi_{Q_2}.$    
\end{Example}

\printbibliography


\end{document}